\documentclass{article}

\usepackage[colorinlistoftodos,bordercolor=orange,backgroundcolor=orange!20,line
color=orange,textsize=scriptsize]{todonotes}
\catcode`\@=11
\catcode`\@=12
\usepackage{subcaption}
% if you need to pass options to natbib, use, e.g.:
%     \PassOptionsToPackage{numbers, compress}{natbib}
% before loading neurips_2020

% ready for submission
% \usepackage{neurips_2021}
\usepackage{arxiv}
 \renewcommand{\citet}[1]{\cite{#1}}
 \usepackage{wrapfig}

% to compile a preprint version, e.g., for submission to arXiv, add add the
% [preprint] option:
%     \usepackage[preprint]{neurips_2020}

% to compile a camera-ready version, add the [final] option, e.g.:
%     \usepackage[final]{neurips_2020}

% to avoid loading the natbib package, add option nonatbib:
%     \usepackage[nonatbib]{neurips_2020}

\usepackage{amsmath, amsthm, amssymb, epsfig, sectsty, graphicx, float, url}
\usepackage{bm}
\usepackage{algorithm, algorithmicx}
\usepackage[noend]{algpseudocode}

\usepackage[utf8]{inputenc} % allow utf-8 input
\usepackage[T1]{fontenc}    % use 8-bit T1 fonts
\usepackage{hyperref}       % hyperlinks
\usepackage{url}            % simple URL typesetting
\usepackage{booktabs}       % professional-quality tables
\usepackage{amsfonts}       % blackboard math symbols
\usepackage{nicefrac}       % compact symbols for 1/2, etc.
\usepackage{microtype}      % microtypography
\usepackage{cleveref}

% CK: decrease paragraph spacing aggressively, so it does not add huge empty spaces.
\usepackage{titlesec}
\titlespacing*{\paragraph} {0pt}{0.35ex plus 0.3ex minus .2ex}{1em}

\newcommand{\mcX}{\mathcal{X}}

\newcommand{\mbR}{\mathbb{R}}

\newcommand{\mcS}{\mathcal{S}}
\newcommand{\mbN}{\mathbb{N}}
\newcommand{\mcY}{\mathcal{Y}}

\newcommand{\mcC}{\mathcal{C}}
\newcommand{\mcV}{\mathcal{V}}
\newcommand{\mcI}{\mathcal{I}}

\DeclareMathOperator*{\argmin}{arg\,min}

\newcommand{\cbap}{{\sf CBA\textsuperscript{+}}}
\newcommand{\spcbap}{{\sf SP-CBA\textsuperscript{+}}}
\newcommand{\cba}{{\sf CBA}}
\newcommand{\rmp}{{\sf RM\textsuperscript{+}}}
\newcommand{\cfrp}{{\sf CFR\textsuperscript{+}}}
\newcommand{\rmm}{{\sf RM}}
\newcommand{\tmm}{{\sf time-max}}
\newcommand{\chp}{{\sf CHOOSEDECISION}}
\newcommand{\upp}{{\sf UPDATEPAYOFF}}

\usepackage{footnote}
\theoremstyle{plain}
\newtheorem{theorem}{Theorem}[section]
\newtheorem{lemma}[theorem]{Lemma}

\newtheorem{proposition}[theorem]{Proposition}

\theoremstyle{definition}

\newtheorem{remark}[theorem]{Remark}

% \title{Conic Blackwell Algorithm: A Parameter-Free Method for Convex-Concave Saddle-Point}
\title{Solving optimization problems with Blackwell approachability}

\author{%
   Julien Grand-Cl{\'e}ment \\
ISOM Department\\
HEC Paris \\
   \texttt{grand-clement@hec.fr} \\
   \And
  Christian Kroer\\
  IEOR Department\\
Columbia University\\
  \texttt{christian.kroer@columbia.edu} \\
}

\begin{document}

\maketitle

\vspace{4mm}

\begin{abstract}
  We introduce the Conic Blackwell Algorithm$^+$ (\cbap) regret minimizer, a new parameter- and scale-free regret minimizer for general convex sets.
  \cbap{} is based on Blackwell approachability and attains $O(\sqrt{T})$ regret.
  We show how to efficiently instantiate \cbap{} for many decision sets of interest, including the simplex, $\ell_{p}$ norm balls, and ellipsoidal confidence regions in the simplex.
  Based on \cbap\, we introduce \spcbap\, a new parameter-free algorithm for solving convex-concave saddle-point problems, which achieves a $O(1/\sqrt{T})$ ergodic rate of convergence.  In our simulations, we demonstrate the wide applicability of \spcbap{} on several standard saddle-point problems, including matrix games, extensive-form games, distributionally robust logistic regression, and Markov decision processes. In each setting, \spcbap{} achieves state-of-the-art numerical performance, and outperforms classical methods, without the need for any choice of step sizes or other algorithmic parameters.
\end{abstract}

\section{Introduction}\label{sec:intro}
In this paper\footnote{A preliminary version of this paper has appeared as a conference paper by the same authors~\citep{grand2021conic}.}, we develop new algorithms for solving the following convex-concave \emph{saddle-point problems} (SPPs):
\begin{align}\label{eq:spp}
\min_{\bm{x} \in \mcX}  \max_{\bm{y} \in \mcY} F(\bm{x},\bm{y}),
\end{align}
where $\mcX \subset \mbR^{n}, \mcY \subset \mbR^{m}$ are convex, compact sets, and $F: \mcX \times \mcY \rightarrow \mbR$ is a subdifferentiable convex-concave function.
The optimization problem \eqref{eq:spp} arises in a number of practical problems. For example, the problem of computing a Nash equilibrium of a zero-sum game can be formulated as a convex-concave SPP, and this is the foundation of most methods for solving sequential zero-sum games~\citep{stengel1996efficient,zinkevich2007regret,tammelin2015solving,kroer2018faster}. Other instances include imaging~\citep{ChambollePock2011}, $\ell_{\infty}$-regression \citep{sidford2018coordinate}, Markov Decision Processes (MDPs) and robust MDPs~\citep{Iyengar,Kuhn,sidford2018coordinate}, market equilibrium~\citep{kroer2021computing} and distributionally robust logistic regression, where the $\max$ term represents the distributional uncertainty~\citep{namkoong2016stochastic,ben2015oracle}.
We introduce efficient algorithms for solving \eqref{eq:spp}, focusing on \textit{parameter-free} algorithms that do not require choosing, learning or tuning any step sizes.

\paragraph{Repeated game framework}
One way to solve convex-concave SPPs is by viewing the SPP as a repeated game between two players: at each iteration $t$, one player chooses $\bm{x}_t\in \mcX$, the other player chooses $\bm{y}_t\in \mcY$, and then the players observe the payoff $F(\bm{x}_t,\bm{y}_t)$.
If each player employs a regret-minimization algorithm, then a well-known theorem says that the uniform average of the decisions generated by the players converge to a solution to the SPP (see Theorem \ref{th:-folk-theorem} in Section \ref{sec:game-setup}).
We will call this the ``repeated game framework''.
There are already well-known algorithms for instantiating the above repeated game framework for \eqref{eq:spp}. 
For example, one can employ the \emph{online mirror descent} (OMD) algorithm~\citep{nemirovsky1983problem}, which generates iterates as follows for the first player (and similarly for the second player):
\begin{align}\label{eq:omd-intro}
       % \bm x_{t+1} = \argmin_{\bm{x} \in \mcX} \langle \eta \bm{f}_{t}, \bm{x} \rangle + \frac{1}{2} \|\bm{x} - \bm{x}_{t}\|_{2}^{2},
       \bm x_{t+1} = \argmin_{\bm{x} \in \mcX} \langle \eta \bm{f}_{t}, \bm{x} \rangle + D(\bm{x}, \bm{x}_{t}),
\end{align}
where $\bm{f}_{t} \in \partial_{\bm{x}} F(\bm{x}_t, \bm{y}_t)$ ($\partial_{\bm{x}}$ denotes the set of subgradients as regards the variable $\bm{x}$), $\eta>0$ is an appropriate step size, and $D$ is a \emph{Bregman divergence} which measures distance between points.
% OMD allows more general \emph{Bregman divergences} to be used in place of the $\ell_2$ norm, but for ease of exposition we present the $\ell_2$ version of OMD and other such algorithms.
Another example of a regret minimizer is Follow-The-Regularized-Leader (FTRL)~\citep{abernethy2009competing}, which generates updates as follows:
\begin{align}\label{eq:ftrl-intro}
       \bm x_{t+1} = \argmin_{\bm{x} \in \mcX} \langle \eta \sum_{\tau=1}^{t} \bm{f}_{\tau}, \bm{x} \rangle + D(\bm{x}, \bm{x}_{t}). %\frac{1}{2}\| \bm{x} \|_{2}^{2}.
\end{align}
The updates \eqref{eq:omd-intro} and \eqref{eq:ftrl-intro} can be computed efficiently for many decision sets $\mcX$ and one can achieve an average regret on the order of $O(1/\sqrt{T})$ after $T$ iterations. This regret can be achieved by choosing a fixed step size $\eta = \sqrt{2}\Omega /L \sqrt{T}$, where $L$ is an upper bound on the $\ell_{2}$-norms of the subgradients $\left( \bm{f}_{t} \right)_{t \geq 0}$ and $\Omega = \max \{ \| \bm{x} - \bm{x}' \|_{2} \; \vert \; \bm{x},\bm{x}' \in \mcX\}.$ 
Choosing the step size $\eta$ is problematic, as it requires choosing in advance the number of iterations $T$ and to know the upper bound $L$, which may be hard to obtain in many applications or too conservative in practice. 
%This can even be practically infeasible for very large instances, since we do not know if the step size will cause a divergence until late in the optimization process.
 Alternatively, it is possible to choose changing step sizes $\eta_{t} = \alpha / \sqrt{t}$, for $\alpha>0$.  Still, adequately tuning the parameter $\alpha$ can be time- and resource-consuming.  This is not just a theoretical issue,  as we highlight in our numerical experiments (Section \ref{sec:simu}) and in the appendices (Appendices \ref{app:details-omd}). 

These issues can be addressed by employing \textit{adaptive} step sizes, which estimate the parameters through the observed subgradients, e.g., AdaHedge for the simplex setting~\citep{de2014follow} or AdaFTRL for general compact convex decisions sets~\citep{orabona2015scale}.
These adaptive variants have not seen practical adoption in large-scale game-solving, where variants based on Blackwell approachability are preferred (see the next paragraph).
As we show in our experiments, adaptive variants of OMD and FTRL perform much worse than our proposed algorithms.
While these adaptive algorithms are referred to as \emph{parameter-free}, this is only true in the sense that they are able to learn the necessary parameters. Our algorithm is parameter-free in the stronger sense that there are no parameters that even require learning. 

\paragraph{Blackwell approachability}
In this paper, we use the framework of \textit{Blackwell} approachability~\citep{blackwell1956analog} to develop novel parameter-free algorithms for solving the convex-concave saddle-point problem \eqref{eq:spp}. In principle, Blackwell approachability arises in the framework of repeated two-player games with vector-valued payoff: the goal of the first-player is to choose a sequence of decisions $\bm{x}_{1},\bm{x}_{2},...,$ such that the \textit{average} of the visited payoff converges to a known target set $\mcS$, while the second-player is typically playing adversarially. Blackwell's celebrated theorem~\citep{blackwell1956analog} provides an algorithm for constructing such a sequence of decisions $\bm{x}_{1},\bm{x}_{2},...,$ in the case where the target set $\mcS$ is \textit{half-space forceable} (see details in Section \ref{sec:game-setup}).

Blackwell approachability is a very general framework and the applications are numerous, ranging from stochastic games~\citep{milman2006approachable}, revenue management, market design, and submodular maximization~\citep{niazadeh2020online}, calibration~\citep{perchet2010approachability}, learning in games~\citep{aumann1995repeated}, and fair online learning~\citep{chzhen2021unified}. In particular,  Blackwell approachability can be used as a regret minimizer~\citep{abernethy2011blackwell}, and provides a no-regret algorithm, with a average regret of $O\left(1/\sqrt{T}\right)$ after $T$ iterations. Crucially, when applied to online regret minimization, Blackwell approachability can be instantiated without evaluating any of the smoothness or convexity parameters of the objective function $F$,  and the resulting no-regret algorithm does not use any step sizes: this is in contrast to classical regret minimizers such as OMD \eqref{eq:omd-intro} and FTRL \eqref{eq:ftrl-intro}, which require choosing step sizes.

Despite its appealing properties from a theoretical standpoint, in practice Blackwell approachability is not widely used to solve classical optimization problems. In fact, to the best of our knowledge, the only practical implementation of Blackwell approachability for solving \eqref{eq:spp} is for the case of bilinear games on the simplex, where $F(\bm{x},\bm{y}) = \langle \bm{x},\bm{Ay}\rangle$ for $\bm{A} \in \mbR^{n \times m}$, and $\mcX,\mcY$ are simplices. This simplex instantiation is also used for Extensive-Form Games (EFGs), via the aforementioned CFR decomposition~\citep{zinkevich2007regret,farina2019online}. 
In the simplex setting, a particular application of Blackwell approachability yields a no-regret algorithm called \textit{regret matching} (\rmm)~\citep{hart2000simple}. Combining \rmm{} with specific weighting, thresholding, and alternating schemes yields an algorithm called \emph{regret matching$^+$} (\rmp)~\citep{tammelin2015solving}.
\rmp\ has been used in \emph{every} case of solving extremely large-scale EFGs in practice, and in particular it was used in recent poker AI milestones, where poker AIs beat human poker players~\citep{bowling2015heads,moravvcik2017deepstack,brown2018superhuman,brown2019superhuman}.
In fact, \rmp{} routinely outperforms theoretically-superior methods, such as optimistic variants of OMD and FTRL~\citep{rakhlin2013online,chiang2012online}, which achieve $O\left(1/T\right)$ convergence rates in the repeated game framework.
Despite its very strong empirical performances, \rmp{} is only defined when the decision set is the simplex. However, many problems of the form \eqref{eq:spp} have convex sets $\mcX,\mcY$ that are not simplexes, e.g., box constraints or norm-balls for distributionally robust optimization~\citep{ben2015oracle}. 
Encouraged by the very  strong empirical performance of \rmp{} and \cfrp, we will construct parameter-free algorithms based on Blackwell approachability for solving more general instances of the saddle-point problem \eqref{eq:spp}.
 
\subsection{Our Contributions}
Our main contributions are as follows.
\begin{itemize}
\item 
\textit{Conic Blackwell Algorithm$^+$ (\cbap).} We start from the general reduction between regret minimization over general convex compact sets and Blackwell approachability~\citep{abernethy2011blackwell}. 
This yields a regret minimizer which we will refer to as the \emph{conic Blackwell algorithm} (\cba). Motivated by the practical performance of \rmp{} on simplexes, we construct a variant of \cba{} which uses a thresholding operation analogous to the one employed by {\rmp}. We call this regret minimizer \cbap{} (Algorithm \ref{alg:CBAp}). We show that \cbap{} achieves $O(1/\sqrt{T})$ average regret in the worst-case. A major selling point of \cbap\ is that it does not require any step size choices. Instead, \cbap{} implicitly adjusts to the structure of the domains and losses by being instantiations of a Blackwell approachability algorithm, which is itself parameter-free.
\item
 \textit{Impacts of weights and alternation.}
As regret minimizers, we show that both \cba{} and \cbap{} are compatible with increasing weighting schemes, that put more weights on more recent decisions and payoffs (Theorem \ref{th:cba-linear-averaging-both} and Theorem \ref{th:cbap-linear-averaging-only-policy}), where \cbap{} is compatible with different weighting schemes for the decisions and the payoffs.
We then introduce a new algorithm for solving convex-concave saddle-point problems by using \cbap\ in a repeated game framework with linear weights on the sequence of decisions and uniform weights on the payoffs (this is known as \emph{linear averaging} in other algorithms~\citep{tammelin2015solving,gao2021increasing}), as well as an alternating payoff scheme. We call this algorithm \spcbap.
We quantify the benefits of alternation for solving \eqref{eq:spp} (Theorem \ref{th:alternation-works-bilinear-case}), and show the first strict improvement guarantee for using alternation; note that prior results only showed that it does not slow the convergence~\citep{burch2019revisiting}.
\item \textit{Efficient implementation of \cbap.} We show how to implement \cba{} and \cbap{} when $\mcX$ and $\mcY$ are simplexes, $\ell_p$ balls, and intersections of the $\ell_2$ ball with a simplex,  which arises naturally as a confidence region. More generally, \cba{} and \cbap{} can be implemented when we can efficiently compute orthogonal projections onto the set $\mcX$ and $\mcY$.
Note that the general reduction of regret minimization and Blackwell approachability from \cite{abernethy2011blackwell} yields \cba, but does not yield a practically-implementable algorithm, as the authors do not consider which decision sets allow for efficient projections.

\item\textit{Practical performance of \spcbap.}
We highlight the practical efficacy of our algorithmic framework on several domains.
First, we apply \spcbap{} to two-player zero-sum matrix games, where the objective function is bilinear, and we compare with \rmp{}, as well as with AdaHedge and AdaFTRL, two adaptive first-order algorithms. We then apply \spcbap{} to extensive-form games (EFGs), where the \rmp{} regret minimizer combined with linear averaging, alternation, and a counterfactual regret (\cfrp) minimization scheme, leads to state-of-the-art practical algorithms~\citep{tammelin2015solving,kroer2018faster,gao2021increasing}. For EFGs, we find that \spcbap{} leads to comparable performance in terms of the iteration complexity, and for some games it slightly outperforms \cfrp. In the simplex setting we also find that \spcbap{} outperforms both AdaHedge and AdaFTRL.
These results show that \spcbap\ recovers the strong practical performance of \rmp\ and \cfrp{} in the only setting where these two methods apply.
Second, and more importantly, we show that \spcbap\ leads to strong practical performance in settings where \rmp{} and \cfrp{}  do not apply.  We consider instances of distributionally robust logistic regression and Markov decision processes (MDPs). For these two instances of saddle-point problems, we find that \spcbap{} performs orders of magnitude better than online mirror descent and follow-the-regularized leader, as well as their optimistic variants, when using their theoretically-correct fixed step sizes. 
Even when considering tuned step sizes for the other algorithms, \spcbap{} performs better, with only a few cases of comparable performance (at step sizes that lead to divergence for some of the other non-parameter-free methods). The fast practical performance of our algorithm, combined with its simplicity and the total lack of step sizes or parameters tuning, suggests that it should be seriously considered as a practical approach for solving  convex-concave optimization instances in various settings.
\end{itemize}

We conclude our introduction with a brief discussion on the average regret achieved by other methods, and resulting convergence to a saddle point. Our algorithm \spcbap{} has a rate of convergence towards a saddle point of  $O(1/\sqrt{T})$, similar to OMD and FTRL.  In theory, it is possible to obtain a faster $O \left( 1/T \right)$ rate of convergence when $F$ is differentiable with Lipschitz gradients, for example via mirror prox~\citep{nemirovski2004prox} or other primal-dual algorithms~\citep{ChambollePock16}. 
However, our experimental results show that \spcbap{} is faster than optimistic variants of FTRL and OMD~\citep{syrgkanis2015fast}, the latter being almost identical to the mirror prox algorithm, and both achieving $O(1/T)$ rate of convergence.
A similar conclusion has been drawn in the context of sequential game solving, where the \rmp-based algorithms have better practical performance than the theoretically-superior $O \left( 1/T \right)$-rate methods~\citep{kroer2018faster,kroer2018solving}.
In a similar vein, using \emph{error-bound conditions}, it is possible to achieve a linear rate, e.g., when solving bilinear saddle-point problems over polyhedral decision sets, by using the extragradient method~\citep{tseng1995linear} or optimistic gradient descent-ascent~\citep{wei2020linear}. However, these linear rates rely on unknown constants, and may not be indicative of practical performance.
\section{Repeated game framework and Blackwell approachability}\label{sec:game-setup}
We will solve \eqref{eq:spp} using a repeated game framework. 
There are $T$ iterations with indices $t=1,\ldots,T$. In this framework, each iteration $t$ consists of the following steps:
\begin{enumerate}
    \item Each player chooses strategies $\bm x_t\in \mcX, \bm y_t \in \mcY$.
    \item The first player observes $\bm{f_t} \in \partial_{\bm x} F(\bm x_t,\bm y_t)$ and uses $\bm{f}_{t}$ when computing the next strategy.
    \item The second player observes $\bm{g_t} \in \partial_{\bm y} F(\bm x_t,\bm y_t)$ and uses $\bm{g}_{t}$ when computing the next strategy.
\end{enumerate}
In the repeated game framework described above, the first player chooses strategies from $\mcX$ to minimize the sequence of payoffs in the repeated game, while the second player chooses strategies from $\mcY$ in order to maximize payoffs.
The goal of each player is to minimize their regret $R_{T,\bm{x}},R_{T,\bm{y}}$ across the $T$ iterations:
\begin{align*}
R_{T,\bm{x}}  = \sum_{t=1}^{T} \langle \bm f_t, \bm x_t \rangle - \min_{\bm{x} \in \mcX} \sum_{t=1}^{T} \langle \bm f_t, \bm x \rangle ,\quad
R_{T,\bm{y}} = \max_{\bm{y} \in \mcY} \sum_{t=1}^{T} \langle \bm g_t ,\bm{y} \rangle - \sum_{t=1}^{T} \langle \bm g_t,\bm{y}_{t} \rangle.
%R_{T,\bm{x}} & = \sum_{t=1}^{T} \langle \bm f_t, \bm x_t \rangle - \min_{\bm{x} \in \mcX} \sum_{t=1}^{T} \langle \bm f_t, \bm x \rangle ,\\
%R_{T,\bm{y}} & = \max_{\bm{y} \in \mcY} \sum_{t=1}^{T} \langle \bm g_t ,\bm{y} \rangle - \sum_{t=1}^{T} \langle \bm g_t,\bm{y}_{t} \rangle.
%% R_{T,\bm{x}} & = \sum_{t=1}^{T} F(\bm{x}_{t},\bm{y}_{t}) - \min_{\bm{x} \in \mcX} \sum_{t=1}^{T} F(\bm{x},\bm{y}_{t}),\\
%% R_{T,\bm{y}} & = \max_{\bm{y} \in \mcY} \sum_{t=1}^{T} F(\bm{x}_{t},\bm{y}) - \sum_{t=1}^{T} F(\bm{x}_{t},\bm{y}_{t}).
\end{align*}
The reason this repeated game framework leads to a solution to the SPP problem~\eqref{eq:spp} is the following well-known theorem (e.g., Theorem 1, \cite{kroer2020ieor8100}). Relying on $F$ being convex-concave and subdifferentiable,  it connects the regret incurred by each player to the duality gap in \eqref{eq:spp}.
\begin{theorem}\label{th:-folk-theorem}
Let $\left( \bar{\bm{x}}_{T},\bar{\bm{y}}_{T} \right) = \dfrac{1}{T}\sum_{t=1}^{T} \left(\bm{x}_{t}, \bm{y}_{t} \right)$ for any $\left(\bm{x}_{t}\right)_{t \geq 1},\left(\bm{y}_{t}\right)_{t \geq 1}$.
Then 
\[\max_{\bm{y} \in \mcY} F(\bar{\bm{x}}_{T},\bm{y}) - \min_{\bm{x} \in \mcX} F(\bm{x},\bar{\bm{y}}_{T}) \leq (R_{T,\bm{x}} + R_{T,\bm{y}})/T. \]
\end{theorem}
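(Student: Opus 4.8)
The plan is to bridge the gap between the regret quantities, which are defined in terms of the \emph{linear} losses $\langle \bm{f}_t, \cdot \rangle$ and $\langle \bm{g}_t, \cdot \rangle$, and the actual values of $F$, using the subgradient inequalities afforded by convexity-concavity, and then to pass from the per-iterate values to the values at the averages $\bar{\bm{x}}_T, \bar{\bm{y}}_T$ via Jensen's inequality. I would first rewrite each regret as a maximum over a comparison point: since $\min_{\bm{x}} \sum_t \langle \bm{f}_t, \bm{x} \rangle = -\max_{\bm{x}} \sum_t \langle \bm{f}_t, -\bm{x}\rangle$, we have $R_{T,\bm{x}} = \max_{\bm{x} \in \mcX} \sum_{t=1}^T \langle \bm{f}_t, \bm{x}_t - \bm{x}\rangle$ and likewise $R_{T,\bm{y}} = \max_{\bm{y} \in \mcY} \sum_{t=1}^T \langle \bm{g}_t, \bm{y} - \bm{y}_t\rangle$. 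In particular, fixing \emph{any} comparison points $\bm{x}^\star \in \mcX$ and $\bm{y}^\star \in \mcY$, these maxima dominate the corresponding single terms.

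Next I would invoke the defining subgradient inequalities. Because $\bm{f}_t \in \partial_{\bm{x}} F(\bm{x}_t, \bm{y}_t)$ and $F(\cdot, \bm{y}_t)$ is convex, we get $\langle \bm{f}_t, \bm{x}_t - \bm{x}^\star\rangle \geq F(\bm{x}_t, \bm{y}_t) - F(\bm{x}^\star, \bm{y}_t)$; because $\bm{g}_t \in \partial_{\bm{y}} F(\bm{x}_t, \bm{y}_t)$ and $F(\bm{x}_t, \cdot)$ is concave, we get $\langle \bm{g}_t, \bm{y}^\star - \bm{y}_t\rangle \geq F(\bm{x}_t, \bm{y}^\star) - F(\bm{x}_t, \bm{y}_t)$. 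Summing over $t$ and adding the two regrets, the cross terms $F(\bm{x}_t, \bm{y}_t)$ cancel, yielding
\[
R_{T,\bm{x}} + R_{T,\bm{y}} \;\geq\; \sum_{t=1}^T F(\bm{x}_t, \bm{y}^\star) \;-\; \sum_{t=1}^T F(\bm{x}^\star, \bm{y}_t).
\]

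The final step is to lower-bound the right-hand side by values at the averages. Since $\bm{y}^\star$ is fixed, $F(\cdot, \bm{y}^\star)$ is convex, so Jensen gives $\tfrac{1}{T}\sum_t F(\bm{x}_t, \bm{y}^\star) \geq F(\bar{\bm{x}}_T, \bm{y}^\star)$; symmetrically $F(\bm{x}^\star, \cdot)$ is concave, so $\tfrac{1}{T}\sum_t F(\bm{x}^\star, \bm{y}_t) \leq F(\bm{x}^\star, \bar{\bm{y}}_T)$. Combining and dividing by $T$ gives $(R_{T,\bm{x}} + R_{T,\bm{y}})/T \geq F(\bar{\bm{x}}_T, \bm{y}^\star) - F(\bm{x}^\star, \bar{\bm{y}}_T)$ for every $\bm{x}^\star, \bm{y}^\star$. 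Taking $\bm{y}^\star$ to be a maximizer of $F(\bar{\bm{x}}_T, \cdot)$ over $\mcY$ and $\bm{x}^\star$ a minimizer of $F(\cdot, \bar{\bm{y}}_T)$ over $\mcX$ (which exist by compactness) produces exactly the claimed duality-gap bound.

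I do not expect a genuine obstacle here, as this is a standard folk theorem; the only point requiring care is keeping the directions of the inequalities consistent, namely that convexity in $\bm{x}$ and concavity in $\bm{y}$ push both the subgradient bounds and the two Jensen steps the same way so that the cancellations and the final optimization over $\bm{x}^\star, \bm{y}^\star$ line up correctly.
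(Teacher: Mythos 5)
Your proof is correct and is essentially the same argument the paper uses when it proves the weighted generalization of this result (Theorem \ref{th:folk-final} in Appendix \ref{app:proof-folk-final}): the same subgradient inequalities from convexity-concavity, the same cancellation of the diagonal terms $F(\bm{x}_t,\bm{y}_t)$, and the same Jensen step, merely written in the reverse order (you go from regret down to the duality gap, while the paper starts from the duality gap and bounds it upward by the regrets).
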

Therefore,  when each player uses a regret minimizer that guarantees regret on the order of $O(\sqrt{T})$,   $\left(\bar{\bm{x}}_{T}, \bar{\bm{y}}_{T} \right)_{T \geq 0}$ converges to a solution to \eqref{eq:spp} at a rate of $O\left(1/\sqrt{T}\right)$.
Later we will show a generalization of Theorem \ref{th:-folk-theorem} that will allow us to incorporate increasing averaging schemes that put additional weight on the later iterates.
Given the repeated game framework, the next question becomes which algorithms to employ in order to minimize regret for each player. As mentioned in Section \ref{sec:intro}, for matrix games and EFGs, variants of Blackwell approachability are used in practice (via the CFR decomposition for EFGs).

\paragraph{Blackwell Approachability}
In Blackwell approachability, a decision maker repeatedly takes decisions $\bm x_t$ from some convex decision set $\mcX$ (this set plays the same role as $\mcX$ or $\mcY$ in \eqref{eq:spp}). After taking decision $\bm x_t$ the player observes a vector-valued affine payoff function $\bm u_t(\bm x) \in \mathbb R^n$. 
The goal for the decision maker is to force the average payoff $\frac{1}{T} \sum_{t = 1}^{T} \bm{u}_{t}(\bm x_{t})$ to approach some convex target set $\mathcal S$.
Blackwell proved that a convex target set $\mathcal S$ can be approached if and only if for every halfspace $\mathcal H\supseteq \mathcal S$, there exists $\bm x \in \mcX$ such that for every possible payoff function $\bm u(\cdot)$, $\bm u(\bm x)$ is guaranteed to lie in $\mathcal H$. The action $\bm x$ is said to \emph{force} $\mathcal H$.
Blackwell's proof is via an algorithm: at iteration $t$, his algorithm projects the average payoff $\bar{\bm{u}}_{t} = \frac{1}{t-1} \sum_{\tau = 1}^{t-1} \bm{u}_\tau(\bm x_\tau)$ onto $\mathcal S$, and then the decision maker chooses an action $\bm x_t$ that forces the tangent halfspace to $\mathcal S$ generated by the normal vector $\bar{\bm{u}}_{t} - \pi_{\mathcal S}(\bar{\bm{u}}_{t})$, where $\pi_{\mathcal S}(\bar{\bm{u}}_{t})$ is the orthogonal projection of $\bar{\bm{u}}_{t}$ onto $\mathcal S$.
We call this algorithm \emph{Blackwell's algorithm}; it approaches $\mathcal S$ at a rate of $O(1/\sqrt{T})$~\citep{blackwell1956analog}. In particular, for $d(\bar{\bm{u}}_{T},\mcS)$ defined as $d(\bar{\bm{u}}_{T},\mcS) = \min \{ \| \bar{\bm{u}}_{T} - \bm{z} \|_{2} \; \vert \bm{z} \in \mcS \},$
%\[ d(\bar{\bm{u}}_{T},\mcS) = \min_{\bm{z} \in \mcS} \| \bar{\bm{u}}_{T} - \bm{z} \|_{2},\]
we have $d(\bar{\bm{u}}_{T},\mcS) = O \left(1/\sqrt{T} \right)$. 
Blackwell's algorithm is really a meta-algorithm, rather than a concrete algorithm. Even within the context of the Blackwell approachability problem, one needs to devise a way to compute the forcing actions needed at each iteration, i.e., to compute $\pi_{\mathcal S}(\bar{\bm{u}})$. 
To the best of our knowledge, prior to this paper, the only practical implementation of Blackwell approachability for solving \eqref{eq:spp} is on the simplex for solving bilinear saddle-point problems and extensive-form games, which leads to \rmm{} and \rmp.

\paragraph{Details on Regret Matching}
Let $\Delta(n)$ be the $n$-dimensional probability simplex.
Regret Matching (\rmm) arises by instantiating Blackwell approachability with the decision space $\mcX$ equal to $\Delta(n)$, the target set $\mathcal S$ equal to the nonpositive orthant $\mathbb R_{-}^n$, and the vector-valued payoff function $\bm u_t(\bm x_t) = \bm f_t - \langle \bm f_t,\bm x_t \rangle \bm{e}$ equal to the regret associated to each of the $n$ actions (which correspond to the corners of $\Delta(n)$). 
Here $\bm{e} \in \mbR^{n}$ is the all one vector.
 \cite{hart2000simple} showed that with this setup, playing each action with probability proportional to its positive regret up to time $t$ satisfies the forcing condition needed in Blackwell's algorithm.
 Formally, regret matching (\rmm) keeps a running sum $\bm r_t = \sum_{\tau = 1}^{t} \left( \bm f_{\tau} - \langle\bm f_{\tau}, \bm x_\tau \rangle \bm{e} \right)$, and then action $i$ is played with probability $\bm x_{t+1,i} = [\bm r_{t,i}]^+ / \sum_{i=1}^n [\bm r_{t,i}]^+$, where $[\cdot]^+$ denotes thresholding at zero.
 By Blackwell's approachability theorem, this algorithm converges to zero average regret at a rate of $O(1/\sqrt{T})$.
 In zero-sum game-solving, it was discovered that a variant of regret matching leads to extremely strong practical performance (but the same theoretical rate of convergence). In regret matching$^+$ (\rmp), the running sum is thresholded at zero at every iteration: $\bm r_t = [\bm r_{t-1} + \bm f_{t} - \langle\bm f_{t}, \bm x_t \rangle \bm{e} ]^+$, and then actions are again played proportional to $\bm r_t$.
In the next section, we describe a framework by \cite{abernethy2011blackwell} for using Blackwell's algorithm to construct regret minimizers for more general convex sets $\mcX$; this will lead to the \cba\ algorithm, from which we will construct \cbap.
While we use the framework of \cite{abernethy2011blackwell}, we note that the \emph{Lagrangian Hedging} framework of \cite{gordon2007no} could also be used as the basis for developed a general class of Blackwell-approachability-style algorithms.
It would be interesting to construct a \cbap-like algorithm and efficient projection approaches for such a framework as well.

\section{Conic Blackwell Algorithm}\label{sec:cba}
\subsection{Our algorithm}\label{sec:cba-algorithm}
In this section we introduce our main regret minimizer, Conic Blackwell Algorithm Plus (\cbap), which uses a variation of Blackwell's approachability procedure~\citep{blackwell1956analog} to perform regret minimization on a general convex compact decision set $\mcX$.
 We will assume that losses are coming from a bounded set; this occurs, for example, if there exists $L_x,L_y$ (that we do not need to know), such that
\begin{equation}\label{eq:definition-Lx-Lx}
\|  \bm{f} \| \leq L_x, \| \bm{g} \| \leq L_y, \; \forall \; \bm{x}\in \mcX, \bm{y} \in \mcY, \forall \; \bm{f} \in \partial_{\bm{x}} F(\bm{x}, \bm{y}), \forall \; \bm{g} \in \partial_{\bm{y}} F(\bm{x}, \bm{y}).
\end{equation}
We will simply write $L$ for $L_{x}$ or $L_{y}$ when we focus on the regret of a single player.
We will also use the notation $\kappa = \max_{\bm{x} \in \mcX} \| \bm{x} \|_{2}$ (recall that $\mcX$ is compact).
\cbap{} is best understood as a combination of two steps. The first is the basic \cba{} algorithm, derived from Blackwell's algorithm, which we describe next.
To convert Blackwell's algorithm to a regret minimizer on $\mcX$, we use the reduction from \citep{abernethy2011blackwell}, which considers the conic hull $\mcC = \textrm{cone}(\{\kappa\} \times \mcX) \subset \mbR^{n+1}$.
The Blackwell approachability problem is then instantiated with $\mcX$ as the decision set, the target set equal to the polar  $\mcC^\circ = \{ \bm{z} : \langle \bm{z}, \bm{\hat z} \rangle \leq 0, \forall \bm{\hat z} \in \mcC\}$ of $\mcC$, and payoff vectors $(\langle\bm{f},\bm{x} \rangle, -\bm{f}) \in \mbR^{n+1}$.
The conic Blackwell algorithm (\cba) is implemented by computing the projection $\pi_\mcC(\bm{u})$ of the average payoff vector $\bm{u}$ onto $\mcC$, noting that the projection can be written as $\alpha(\kappa,\bm{x})$ where $\alpha \geq 0$ is a scalar, and playing the action $\bm{x}$.
The second step in {\cbap} is to replace the average payoff vector $\bm{u}$ with a running \textit{projected} aggregation of the payoffs, where we always add the newest payoff to the aggregate, and then project the aggregate onto $\mcC$.

More concretely, pseudocode for {\cbap} is given in Algorithm \ref{alg:CBAp}.
This pseudocode relies on two functions:
$\chp_{\cbap}: \mbR^{n+1} \rightarrow \mbR^{n}$, which maps the aggregate payoff vector $\bm u_t$ to a decision in $\mcX$, and $\upp_{\cbap}$ which controls how we aggregate payoffs.
Given an aggregate payoff vector $\bm{u}=(\tilde{u},\hat{\bm{u}}) \in \mbR \times \mbR^{n}$,  we have
\[\chp_{\cbap}(\bm{u})=(\kappa/\tilde{u}) \hat{\bm{u}}.\]
If $\tilde{u}=0$, we just let $\chp_{\cbap}(\bm{u})=\bm{x}_{0}$ for some arbitrary $\bm{x}_{0} \in \mcX$. 
The function $\upp_{\cbap}$ is implemented by adding the most recent payoff to the aggregate payoffs, and then projecting onto $\mcC$.
More formally, it is defined as
\[
\upp_{\cbap}(\bm{u}, \bm{x},\bm{f},\omega) =\pi_{\mcC} \left( \bm{u} + \omega \left( \langle \bm{f},\bm{x} \rangle / \kappa, 
- \bm{f} \right) \right),
\]
where $\omega$ is the weight assigned to the most recent payoff. Because of the projection step in $\upp_{\cbap}$, we always have $\bm{u} \in \mcC$,  which in turn guarantees that $\chp_{\cbap}(\bm{u}) \in \mcX$, since $\mcC = \textrm{cone}(\{\kappa\} \times \mcX)$.
\begin{algorithm}[t]
\caption{Conic Blackwell Algorithm Plus (\cbap)}\label{alg:CBAp}
\begin{algorithmic}[1]
\State \textbf{Input} A convex, compact set $\mcX \subset \mbR^{n}$, $\kappa = \max \{ \| \bm{x} \|_{2} \; \vert \; \bm{x} \in \mcX \}$.
\State \textbf{Algorithm parameters} Weights $\left(\omega_{\tau} \right)_{\tau \geq 1} \in \mbR^{\mbN}$.
\State \textbf{Initialization} $t=1$,  $\bm{x}_{1} \in \mcX$. 
\State Observe $\bm{f}_{1}$ then set
$\bm{u}_{1} =  \omega_{1}\left( \langle \bm{f}_{1},\bm{x}_{1} \rangle / \kappa,  - \bm{f}_{1} \right) \in \mbR \times \mbR^{n}$.
\For{$t \geq 1$}
\State Choose $\bm{x}_{t+1} = \chp_{\cbap}(\bm{u}_{t})$. \label{step:alg:projection}
\State Observe the loss $\bm{f}_{t+1} \in \mbR^{n}$.
\State Update $\bm{u}_{t+1} = \upp_{\cbap}(\bm{u}_{t}, \bm{x}_{t+1},\bm{f}_{t+1},\omega_{t+1}).$ \label{alg:step:update-u}
\State Increment $t \leftarrow t+1.$
\EndFor
\end{algorithmic}
\end{algorithm}

Let us give some intuition on the effect of projection onto $\mcC$. For a geometric intuition, it is easier to visualize the dynamics in $\mbR^{2}$.
Figure \ref{fig:projection-cone} illustrates the projection step $\pi_{\mcC}(\cdot)$ of $\cbap$. At a high level, from $\bm{u}_{t}$ to $\bm{u}_{t+1}$, an \emph{instantaneous payoff vector}
\[ \bm{v}_{t+1}=\omega_{t+1}\left(\langle\bm{f}_{t+1},\bm{x}_{t+1}\rangle/\kappa, - \bm{f}_{t+1}\right) \]
is first added to $\bm{u}_{t}$, and then the resulting vector $\bm{u}^{+}_{t}=\bm{u}_{t}+\bm{v}_{t+1}$ is projected onto $\mcC$. The projection $\pi_{\mcC}(\cdot)$ moves the vector $\bm{u}^{+}_{t}$ along the edges of the cone $\mcC^{\circ}$, preserving the (orthogonal) distance $d$ to $\mcC^{\circ}$.
Intuitively, from a game-theoretic perspective in the usual case where $\mcC=\mbR^{2}_{+}$, the projection eliminates the negative components of the payoffs, meaning that we do not remember ``negative regrets.''
%{\bf \color{blue} note: is $\bm{v}_{t}$ in the right direction in the figure with the cones?}

\begin{figure}[hbt]
\centering
   \begin{subfigure}{0.4\textwidth}

\centering
         \includegraphics[width=1.0\linewidth]{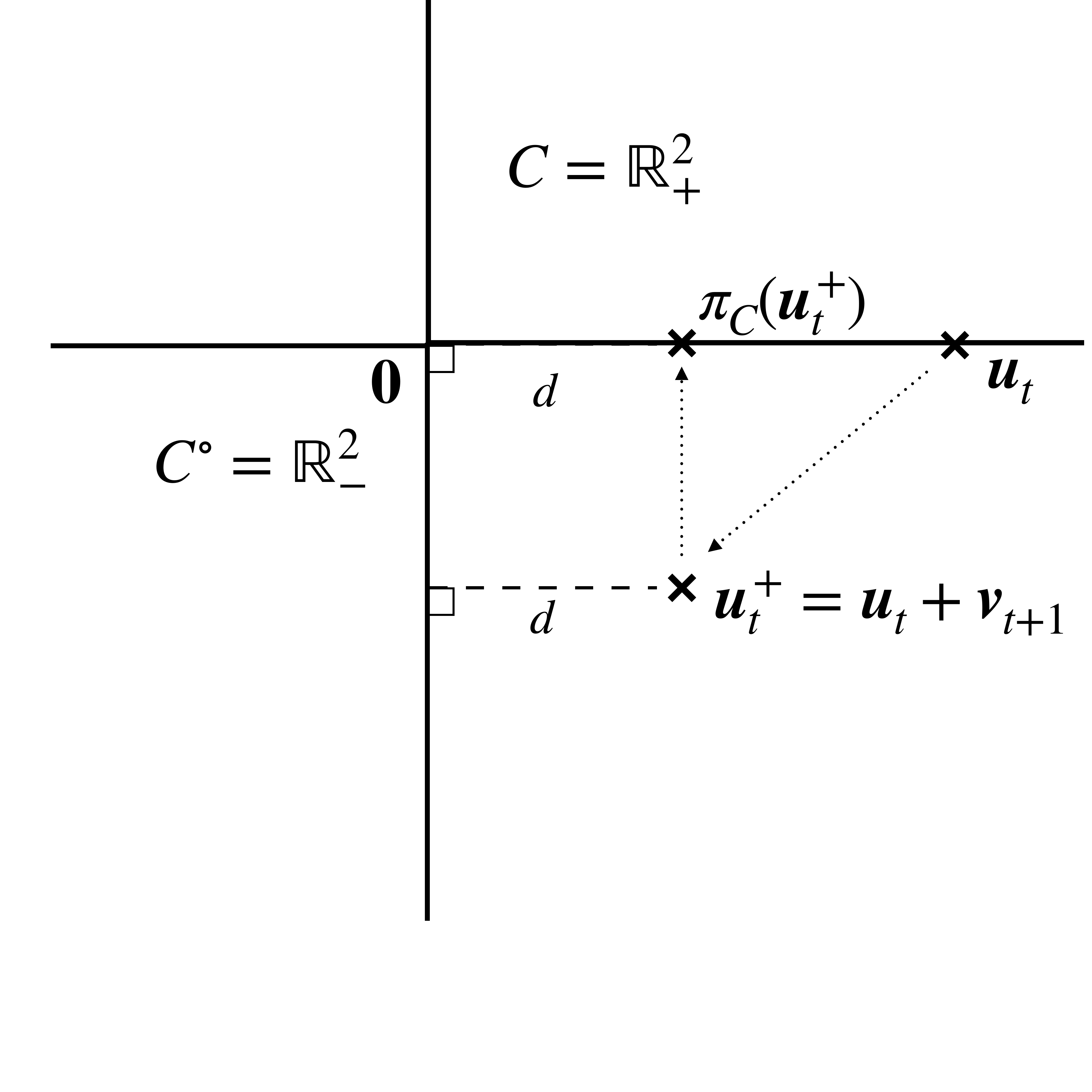}
  \end{subfigure}
     \begin{subfigure}{0.4\textwidth}
\centering
         \includegraphics[width=1.0\linewidth]{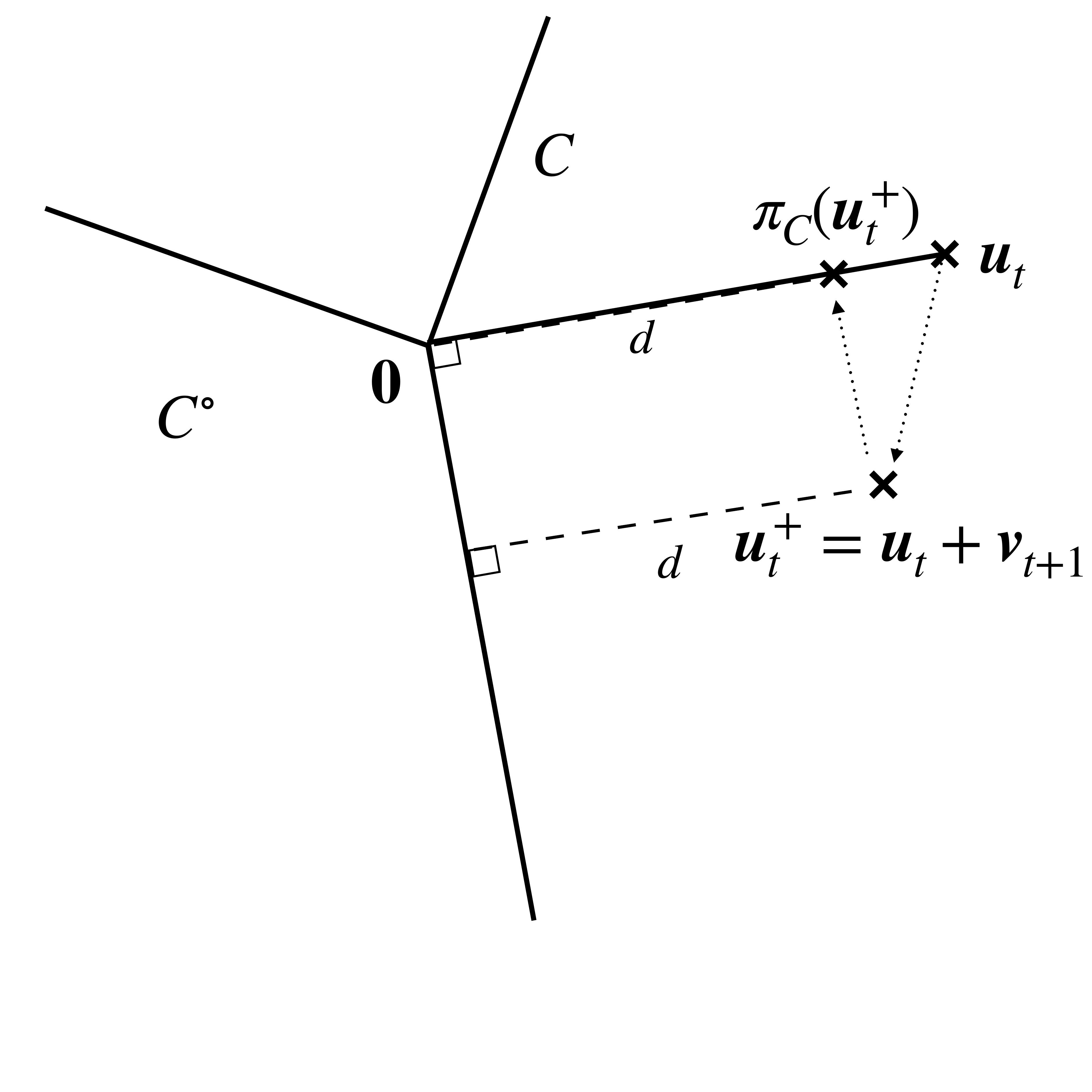}
  \end{subfigure}
  \caption{Illustration of $\pi_{\mcC}(\cdot)$ for $\mcC=\mbR_{+}^{2}$ (left-hand side) and $\mcC$ any cone in $\mbR^{2}$ (right-hand side).}
  \label{fig:projection-cone}
\end{figure}
%We will see in the next section that \rmp{} is related to \cbap{} but replaces the exact projection step $\pi_{\mcC}(\bm{u})$ in $\upp_{\cbap}$ by a suboptimal solution to the projection problem.
Let us also note the difference between \cbap{} and  the algorithm introduced in \cite{abernethy2011blackwell}, which we have called \cba.  
\cba{} uses different \upp{} and \chp{} functions. 
In \cba{} the payoff update is defined as \[\upp_{\cba}(\bm{u}, \bm{x},\bm{f},\omega) = \bm{u} +  \omega \left( \langle \bm{f} ,\bm{x} \rangle /\kappa, 
- \bm{f} \right).\]
Note in particular the lack of projection as compared to \cbap{}, this is analogous to the difference between \rmm{} and \rmp. The $\chp_{\cba}$ function then requires a projection onto $\mcC$: \[\chp_{\cba}(\bm{u}) = \chp_{\cbap}\left(\pi_{\mcC}(\bm{u}) \right).\]
Based upon the analysis in \cite{blackwell1956analog},  \cite{abernethy2011blackwell} show that \cba{} with uniform weights (both on payoffs and decisions) guarantees $O(1/\sqrt{T})$ average regret.
% The difference between \cbap{} and \cba{} is similar to the difference between the \rmm{} and \rmp{} algorithms. 

\subsection{Regret bounds for \cba{} and \cbap}\label{sec:regret-bounds}
In this section we investigate the theoretical performance guarantees of \cba\ and \cbap\ when we vary the weights on decisions and payoffs. This is motivated by practical performance, where it has been observed in several other settings that increasing weights usually perform better~\citep{gao2021increasing,tammelin2015solving,brown2019solving}, and that \emph{alternating} update schemes are helpful~\citep{tammelin2015solving,kroer2020ieor8100}.
First, we show that \cba\ and \cbap\ are both compatible with varying weights $\left(\omega_{t}\right)_{t \geq 1}$, when those weights are used on both decisions and payoffs. 
Second, we show that \cbap{} is compatible with different weights $\left(\omega_{t}\right)_{t \geq 1}$ on payoffs and weights $\left(\theta{t}\right)_{t \geq 1}$ on decisions.

We start with the following theorem, which shows that \cba{} with weights on both decisions and payoffs is a no-regret algorithm.
This generalizes the result of \citep{abernethy2011blackwell}, which shows that \cba\ works for uniform weights.
\begin{theorem}\label{th:cba-linear-averaging-both}
Let $\left( \bm{x}_{t} \right)_{t \geq 1}$ be the sequence of decisions generated by \cba{} with payoff weights $\left( \omega_{t} \right)_{t \geq 1}$ and let $S_{t} = \sum_{\tau=1}^{t} \omega_{\tau}$ for any $t \geq 1$. 
Then
\[ \sum_{t=1}^{T} \omega_{t} \langle \bm{f}_{t},\bm{x}_{t} \rangle -  \min_{\bm{x} \in \mcX} \sum_{t=1}^{T} \omega_{t} \langle \bm{f}_{t},\bm{x} \rangle = O \left( \kappa \cdot  d(\bm{u}_{T},\mcC^{\circ}) \right).\]
Additionally,
\[ d(\bm{u}_{T},\mcC^{\circ}) = O \left(L \cdot \sqrt{ \sum_{t=1}^{T} \omega_{t}^{2}}\right).\]
Overall, the average regret is such that
\[\dfrac{\sum_{t=1}^{T} \omega_{t} \langle \bm{f}_{t},\bm{x}_{t} \rangle -  \min_{\bm{x} \in \mcX} \sum_{t=1}^{T} \omega_{t} \langle \bm{f}_{t},\bm{x} \rangle}{S_{T}} \leq \sqrt{2} \kappa L \frac{\sqrt{ \sum_{t=1}^{T} \omega_{t}^{2}}}{\sum_{t=1}^{T} \omega_{t} }.\]
\end{theorem}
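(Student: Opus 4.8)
The plan is to establish the two displayed bounds separately and then chain them, following the organization of the statement. Throughout I write the (unprojected) aggregate as $\bm{u}_{T} = \sum_{t=1}^{T} \bm{v}_{t}$ with instantaneous payoff $\bm{v}_{t} = \omega_{t}\big( \langle \bm{f}_{t},\bm{x}_{t}\rangle/\kappa,\, -\bm{f}_{t}\big)$, and I record the structural fact that drives everything: since $\mcC$ and $\mcC^{\circ}$ are mutually polar cones, the Moreau decomposition gives $\bm{u} = \pi_{\mcC}(\bm{u}) + \pi_{\mcC^{\circ}}(\bm{u})$ with $\pi_{\mcC}(\bm{u}) \perp \pi_{\mcC^{\circ}}(\bm{u})$, so that $d(\bm{u},\mcC^{\circ}) = \|\pi_{\mcC}(\bm{u})\|_{2}$.

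For the first bound I would rewrite the weighted regret in terms of $\bm{u}_{T}=(\tilde{u}_{T},\hat{\bm{u}}_{T})$. Since $\sum_{t}\omega_{t}\langle \bm{f}_{t},\bm{x}_{t}\rangle = \kappa\,\tilde{u}_{T}$ and $\min_{\bm{x}}\sum_{t}\omega_{t}\langle\bm{f}_{t},\bm{x}\rangle = -\max_{\bm{x}\in\mcX}\langle\hat{\bm{u}}_{T},\bm{x}\rangle$, the weighted regret equals the support-function evaluation
\[ g(\bm{u}_{T}) \;:=\; \kappa\,\tilde{u}_{T} + \max_{\bm{x}\in\mcX}\langle \hat{\bm{u}}_{T},\bm{x}\rangle \;=\; \max_{\bm{x}\in\mcX}\big\langle (\kappa,\bm{x}),\,\bm{u}_{T}\big\rangle \]
over the generators $(\kappa,\bm{x})$ of $\mcC$; note that $g(\bm{u})\le 0$ is precisely the condition $\bm{u}\in\mcC^{\circ}$. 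Writing $\bm{p}=\pi_{\mcC^{\circ}}(\bm{u}_{T})$, every generator satisfies $\langle(\kappa,\bm{x}),\bm{p}\rangle\le 0$ and $\|(\kappa,\bm{x})\|_{2}=\sqrt{\kappa^{2}+\|\bm{x}\|_{2}^{2}}\le\sqrt{2}\,\kappa$, so $\langle(\kappa,\bm{x}),\bm{u}_{T}\rangle = \langle(\kappa,\bm{x}),\bm{p}\rangle + \langle(\kappa,\bm{x}),\bm{u}_{T}-\bm{p}\rangle \le \sqrt{2}\,\kappa\, d(\bm{u}_{T},\mcC^{\circ})$ by Cauchy--Schwarz. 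Maximizing over $\bm{x}$ gives the first claim $R = g(\bm{u}_{T}) = O\big(\kappa\, d(\bm{u}_{T},\mcC^{\circ})\big)$.

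For the second bound the crux is the forcing identity produced by the $\chp_{\cba}$ rule. Since $\bm{x}_{t}=\chp_{\cba}(\bm{u}_{t-1})$, we have $\pi_{\mcC}(\bm{u}_{t-1})=\alpha_{t}(\kappa,\bm{x}_{t})$ for some $\alpha_{t}\ge 0$, and a one-line computation gives $\langle \pi_{\mcC}(\bm{u}_{t-1}),\bm{v}_{t}\rangle = \alpha_{t}\omega_{t}\big(\langle\bm{f}_{t},\bm{x}_{t}\rangle - \langle\bm{f}_{t},\bm{x}_{t}\rangle\big)=0$, which is the defining feature of the conic reduction. Setting $\Phi_{t}:=d(\bm{u}_{t},\mcC^{\circ})^{2}$ and using $\pi_{\mcC^{\circ}}(\bm{u}_{t-1})\in\mcC^{\circ}$ as a suboptimal competitor for $\bm{u}_{t}=\bm{u}_{t-1}+\bm{v}_{t}$, the Moreau identity $\bm{u}_{t-1}-\pi_{\mcC^{\circ}}(\bm{u}_{t-1})=\pi_{\mcC}(\bm{u}_{t-1})$ yields
\[ \Phi_{t}\le \big\|\pi_{\mcC}(\bm{u}_{t-1})+\bm{v}_{t}\big\|_{2}^{2} = \Phi_{t-1} + 2\langle\pi_{\mcC}(\bm{u}_{t-1}),\bm{v}_{t}\rangle + \|\bm{v}_{t}\|_{2}^{2} = \Phi_{t-1}+\|\bm{v}_{t}\|_{2}^{2}. \]
Telescoping from $\Phi_{0}=0$ and bounding $\|\bm{v}_{t}\|_{2}^{2}=\omega_{t}^{2}\big((\langle\bm{f}_{t},\bm{x}_{t}\rangle/\kappa)^{2}+\|\bm{f}_{t}\|_{2}^{2}\big)\le 2L^{2}\omega_{t}^{2}$ (using $\|\bm{x}_{t}\|_{2}\le\kappa$ and $\|\bm{f}_{t}\|\le L$) gives $d(\bm{u}_{T},\mcC^{\circ}) = O\big(L\sqrt{\sum_{t}\omega_{t}^{2}}\big)$.

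Chaining the two bounds and dividing by $S_{T}$ yields the average-regret statement, and carefully tracking the constants from the two steps produces a bound of the claimed form $\sqrt{2}\,\kappa L\,\sqrt{\sum_{t}\omega_{t}^{2}}/\sum_{t}\omega_{t}$. I expect the main obstacle to be the first step: recognizing that the weighted regret is exactly the support function $g(\bm{u}_{T})$ and that $g$ is controlled by the polar distance through the nonpositivity $\langle(\kappa,\bm{x}),\pi_{\mcC^{\circ}}(\bm{u}_{T})\rangle\le 0$. The forcing identity and the telescoping potential are then standard Blackwell-style manipulations; the only twist is that here the forcing inner product vanishes exactly rather than being merely nonpositive, which is what keeps the analysis clean under arbitrary payoff weights $\omega_{t}$.
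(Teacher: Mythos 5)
Your proposal is correct and follows essentially the same route as the paper's proof: the first bound via polarity of $\mcC$ and $\mcC^{\circ}$ plus Cauchy--Schwarz over the generators $(\kappa,\bm{x})$ (you inline the argument that the paper delegates to the distance-duality statement of Lemma \ref{lem:conic-opt}), and the second bound via the exact forcing identity $\langle \pi_{\mcC}(\bm{u}_{t-1}),\bm{v}_{t}\rangle = 0$ and the telescoping potential $d(\bm{u}_{t},\mcC^{\circ})^{2}$ with $\pi_{\mcC^{\circ}}(\bm{u}_{t-1})$ as the suboptimal competitor. Your explicit bound $\|\bm{v}_{t}\|_{2}^{2}\leq 2L^{2}\omega_{t}^{2}$ is in fact slightly more careful than the paper's $\|\bm{v}_{t}\|_{2}^{2}\leq L^{2}\omega_{t}^{2}$, at the cost of a constant factor in the final display.
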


The proof of Theorem \ref{th:cba-linear-averaging-both} uses the following facts from conic optimization. Several of these are direct consequences of Moreau's decomposition theorem. We provide proofs of all statements in Lemma \ref{lem:conic-opt} in Appendix \ref{app:proof-lemma-conic-opt}.

\begin{lemma}\label{lem:conic-opt}
Let $\mcC \subset \mbR^{n+1} $ be a closed convex cone and $\mcC^{\circ}$ its polar.
\begin{enumerate}
\item If $\bm{u} \in \mbR^{n+1}$, then $\bm{u} - \pi_{\mcC^{\circ}}(\bm{u}) = \pi_{\mcC}(\bm{u})$, $\langle \bm{u} - \pi_{\mcC^{\circ}}(\bm{u}),\pi_{\mcC^{\circ}}(\bm{u}) \rangle = 0,$ and $\| \bm{u} - \pi_{\mcC^{\circ}}(\bm{u}) \|_{2} \leq \| \bm{u} \|_{2}$. \label{lem:statement:u-minus-proj-in-C-polar}
\item If $\bm{u} \in \mbR^{n+1}$ then 
\[ d(\bm{u},\mcC)   = \max_{\bm{w} \in \mcC^{\circ} \cap B_{2}(1) } \langle\bm{u},\bm{w} \rangle,\]
\label{lem:statement:dist-to-cone}
where $B_{2}(1) = \{ \bm{w} \in \mbR^{n+1} \; \vert \; \| \bm{w} \|_{2} \leq 1\}$.
\item If $\bm{u} \in \mcC$, then $d(\bm{u},\mcC^{\circ}) = \| \bm{u} \|_{2}$. \label{lem:statement:norm2-C}
\item Assume that $\mcC = \textrm{cone}(\{ \kappa \} \times \mcX)$ with $\mcX \subset \mbR^{n}$ convex compact and $\kappa = \max_{\bm{x} \in \mcX} \| \bm{x} \|_{2}$. Then $\mcC^{\circ}$ is a closed convex cone.  Additionally, if $\bm{u} \in \mcC$ we have $-\bm{u} \in \mcC^{\circ}$. \label{lem:statement:negative-cone-in-polar}
\item \label{lem:statement:partial order}
Let us write $\leq_{\mcC^{\circ}}$ for the ordering induced by $\mcC^{\circ}:\bm{x} \leq_{\mcC^{\circ}} \bm{y} \iff \bm{y} - \bm{x} \in \mcC^{\circ}$.
Then 
\begin{align}
\bm{x} \leq_{\mcC^{\circ}} \bm{y}, \bm{x}' \leq_{\mcC^{\circ}} \bm{y}' & \Rightarrow \bm{x} + \bm{x}' \leq_{\mcC^{\circ}} \bm{y} + \bm{y}',  &\forall \; \bm{x},\bm{x}',\bm{y},\bm{y}' \in \mbR^{n+1}, \label{eq:partial-order-additivity}\\
\bm{x} + \bm{x}'  \leq_{\mcC^{\circ}} \bm{y} & \Rightarrow \bm{x} \leq_{\mcC^{\circ}} \bm{y}, &\forall \; \bm{x},\bm{y} \in \mbR^{n+1}, \forall \; \bm{x}' \in \mcC^{\circ}, \label{eq:partial-order-minus-x-prime}
\end{align}
\item \label{lem:statement:x-y-dist} Assume that $\bm{x} \leq_{\mcC^{\circ}} \bm{y}$ for $\bm{x},\bm{y} \in \mbR^{n+1}$. Then $d(\bm{y},\mcC^{\circ}) \leq \| \bm{x} \|_{2}$.
\end{enumerate}
\end{lemma}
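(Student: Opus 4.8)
The plan is to prove the six statements largely independently, relying on Moreau's decomposition theorem for statements 1--3, on the explicit structure of the lifted cone $\mcC$ for statement 4, and on elementary cone arithmetic for statements 5--6.

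For statement 1 I would invoke Moreau's decomposition theorem: for a closed convex cone $\mcC$ with polar $\mcC^{\circ}$, every $\bm{u}$ decomposes as $\bm{u} = \pi_{\mcC}(\bm{u}) + \pi_{\mcC^{\circ}}(\bm{u})$ with $\langle \pi_{\mcC}(\bm{u}), \pi_{\mcC^{\circ}}(\bm{u}) \rangle = 0$. The identity $\bm{u} - \pi_{\mcC^{\circ}}(\bm{u}) = \pi_{\mcC}(\bm{u})$ and the orthogonality claim are then immediate rewrites, and the norm bound follows from the Pythagorean identity $\| \bm{u} \|_2^2 = \| \pi_{\mcC}(\bm{u}) \|_2^2 + \| \pi_{\mcC^{\circ}}(\bm{u}) \|_2^2$. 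Statement 3 is then one line: if $\bm{u} \in \mcC$ then $\pi_{\mcC}(\bm{u}) = \bm{u}$, so by statement 1, $d(\bm{u},\mcC^{\circ}) = \| \bm{u} - \pi_{\mcC^{\circ}}(\bm{u}) \|_2 = \| \pi_{\mcC}(\bm{u}) \|_2 = \| \bm{u} \|_2$. For statement 2 I would use $d(\bm{u},\mcC) = \| \pi_{\mcC^{\circ}}(\bm{u}) \|_2$ and prove two inequalities: for any $\bm{w} \in \mcC^{\circ} \cap B_2(1)$, decomposing $\langle \bm{u}, \bm{w} \rangle = \langle \pi_{\mcC}(\bm{u}), \bm{w} \rangle + \langle \pi_{\mcC^{\circ}}(\bm{u}), \bm{w} \rangle$ gives a first term $\leq 0$ (since $\pi_{\mcC}(\bm{u}) \in \mcC$, $\bm{w} \in \mcC^{\circ}$) and a second term bounded by $\| \pi_{\mcC^{\circ}}(\bm{u}) \|_2$ via Cauchy--Schwarz; the reverse inequality comes from plugging in the feasible point $\bm{w} = \pi_{\mcC^{\circ}}(\bm{u})/\| \pi_{\mcC^{\circ}}(\bm{u}) \|_2$ (with the case $\pi_{\mcC^{\circ}}(\bm{u}) = 0$ handled trivially), which attains the bound by orthogonality.

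The main obstacle, and the only statement that genuinely uses the construction $\mcC = \textrm{cone}(\{\kappa\} \times \mcX)$ with $\kappa = \max_{\bm{x} \in \mcX} \| \bm{x} \|_2$, is statement 4. That $\mcC^{\circ}$ is a closed convex cone holds for the polar of any set; I would also record that $\mcC$ itself is a closed convex cone, being the conic hull of the compact convex set $\{\kappa\} \times \mcX$, which avoids the origin when $\kappa > 0$. The crux is showing $-\bm{u} \in \mcC^{\circ}$ whenever $\bm{u} \in \mcC$. Since $\mcX$ is convex, every element of $\mcC$ has the form $\lambda(\kappa,\bm{x})$ with $\lambda \geq 0$, $\bm{x} \in \mcX$, so for $\bm{u} = \lambda(\kappa,\bm{x})$ and an arbitrary generator $\hat{\bm{z}} = \mu(\kappa,\bm{x}')$ I would compute $\langle \bm{u}, \hat{\bm{z}} \rangle = \lambda\mu(\kappa^2 + \langle \bm{x}, \bm{x}' \rangle)$ and invoke Cauchy--Schwarz with $\| \bm{x} \|_2, \| \bm{x}' \|_2 \leq \kappa$ to get $\langle \bm{x}, \bm{x}' \rangle \geq -\kappa^2$, hence $\langle \bm{u}, \hat{\bm{z}} \rangle \geq 0$ and $\langle -\bm{u}, \hat{\bm{z}} \rangle \leq 0$ on all of $\mcC$. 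This is exactly where the choice of $\kappa$ as the maximal norm is essential: it makes the lifted cone acute enough to contain its own negative inside the polar.

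Statements 5 and 6 are then elementary. For statement 5, both implications reduce to the closure of the convex cone $\mcC^{\circ}$ under addition: additivity gives $(\bm{y} - \bm{x}) + (\bm{y}' - \bm{x}') \in \mcC^{\circ}$, and the second implication rewrites $\bm{y} - \bm{x} = (\bm{y} - \bm{x} - \bm{x}') + \bm{x}'$ as a sum of two elements of $\mcC^{\circ}$. For statement 6, from $\bm{y} - \bm{x} \in \mcC^{\circ}$ I would take $\bm{z} = \bm{y} - \bm{x}$ as a feasible point in $d(\bm{y},\mcC^{\circ}) = \min\{ \| \bm{y} - \bm{z} \|_2 : \bm{z} \in \mcC^{\circ}\}$, yielding $d(\bm{y},\mcC^{\circ}) \leq \| \bm{y} - (\bm{y} - \bm{x}) \|_2 = \| \bm{x} \|_2$.
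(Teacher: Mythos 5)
Your proposal is correct and takes essentially the same route as the paper's own proof: Moreau's decomposition theorem for statements 1--3 (including the same two-inequality duality argument for statement 2), the Cauchy--Schwarz computation exploiting $\kappa = \max_{\bm{x} \in \mcX} \| \bm{x} \|_{2}$ for statement 4, and elementary convex-cone arithmetic plus a feasible-point bound for statements 5--6. The only cosmetic differences are that you write the statement-2 argument in terms of $\pi_{\mcC^{\circ}}(\bm{u})$ rather than $\bm{u} - \pi_{\mcC}(\bm{u})$ (equal by Moreau) and explicitly record the closedness of $\mcC$, which the paper leaves implicit.
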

We are now ready to prove Theorem \ref{th:cba-linear-averaging-both}.
\begin{proof}[Proof of Theorem \ref{th:cba-linear-averaging-both}] The proof proceeds in two steps.
We start by proving 
\[ \sum_{t=1}^{T} \omega_{t} \langle \bm{f}_{t},\bm{x}_{t} \rangle -  \min_{\bm{x} \in \mcX} \sum_{t=1}^{T} \omega_{t} \langle \bm{f}_{t},\bm{x} \rangle = O \left(\kappa \cdot  d(\bm{u}_{T},\mcC^{\circ}) \right).\]
We have
\begin{align}
d(\bm{u}_{T},\mcC^{\circ}) &  = \max_{\bm{w} \in \textrm{cone}(\{ \kappa \} \times \mcX) \bigcap B_{2}(1) } \langle \sum_{t=1}^{T} \omega_{t} \bm{v}_{t},\bm{w} \rangle \label{eq:proof-step-0} \\
 & \geq  \max_{\bm{x} \in \mcX} \langle  \sum_{t=1}^{T}  \omega_{t} \bm{v}_{t},\dfrac{(\kappa,\bm{x})}{\| (\kappa,\bm{x}) \|_{2}} \rangle \nonumber \\
 & \geq \max_{\bm{x} \in \mcX} \dfrac{\sum_{t=1}^{T} \omega_{t} \langle \bm{f}_{t},\bm{x}_{t} \rangle - \sum_{t=1}^{T} \omega_{t} \langle \bm{f}_{t},\bm{x} \rangle}{ \| (\kappa,\bm{x})\|_{2}} \label{eq:proof-lin-avg-step-0},
\end{align}
where \eqref{eq:proof-step-0} follows from Statement \ref{lem:statement:dist-to-cone} in Lemma \ref{lem:conic-opt}, and \eqref{eq:proof-lin-avg-step-0} follows from \cba{} maintaining
\[ \bm{u}_{t} = \left( \sum_{\tau=1}^{t} \omega_{\tau} \frac{ \langle \bm{f}_{\tau},  \bm{x}_{\tau}\rangle}{\kappa} ,  - \sum_{\tau=1}^{t} \omega_{\tau} \bm{f}_{\tau} \right), \forall \; t \geq 1.\]
Since $\| (\kappa,\bm{x})\|_{2} \leq \sqrt{2}\kappa$, we can conclude that
\[ \sqrt{2} \kappa d(\bm{u}_{T},\mcC^{\circ}) \geq \sum_{t=1}^{T} \omega_{t} \langle \bm{f}_{t},\bm{x}_{t} \rangle -  \min_{\bm{x} \in \mcX} \sum_{t=1}^{T} \omega_{t} \langle \bm{f}_{t},\bm{x} \rangle.\]
We now prove that \[ d(\bm{u}_{T},\mcC^{\circ}) \leq L \sqrt{\sum_{\tau=1}^{T} \omega_{\tau}^{2}}.\]
We have
\begin{align}
d(\bm{u}_{t+1},\mcC^{\circ})^{2} & = \min_{\bm{z} \in \mcC^{\circ}} \| \bm{u}_{t+1} - \bm{z} \|_{2}^{2} \nonumber \\
& \leq \| \bm{u}_{t+1} - \pi_{\mcC^{\circ}}(\bm{u}_{t}) \|_{2}^{2} \nonumber \\
& \leq \|  \bm{u}_{t} +\omega_{t+1} \bm{v}_{t+1} - \pi_{\mcC^{\circ}}(\bm{u}_{t})\|_{2}^{2} \nonumber \\
% & \leq \|  (\bm{u}_{t} -\pi_{\mcC^{\circ}}(\bm{u}_{t}))  + \omega_{t+1} \bm{v}_{t+1} \|_{2}^{2} \nonumber \\
& \leq   \| \bm{u}_{t} - \pi_{\mcC^{\circ}}(\bm{u}_{t}) \|_{2}^{2} + \omega_{t+1}^{2} \| \bm{v}_{t+1} \|_{2}^{2} \nonumber \\
& + 2 \omega_{t+1} \langle \bm{u}_{t} - \pi_{\mcC^{\circ}}(\bm{u}_{t}), \bm{v}_{t+1}  \rangle \nonumber \\
& \leq  \| \bm{u}_{t} - \pi_{\mcC^{\circ}}(\bm{u}_{t}) \|_{2}^{2} + \omega_{t+1}^{2} \| \bm{v}_{t+1}\|_{2}^{2}, \label{eq:proof-lin-avg-step-1}
\end{align}
where \eqref{eq:proof-lin-avg-step-1} follows from
\begin{equation}\label{eq:blackwell-forcing}
\langle \bm{u}_{t} - \pi_{\mcC^{\circ}}(\bm{u}_{t}), \bm{v}_{t+1}\rangle = 0.
\end{equation} 
% $ \langle \bm{u}_{t} - \pi_{\mcC^{\circ}}(\bm{u}_{t}), \bm{v}_{t+1} \rangle = 0$. 
This is one of the crucial components of Blackwell's approachability framework: the current decision is chosen to force the next payoff to lie in the hyperplane generated by projecting the aggregate payoffs onto the target set. To see this, first note that $\bm{u}_{t} - \pi_{\mcC^{\circ}}(\bm{u}) = \pi_{\mcC}(\bm{u}_{t})$. Let us write $\bm{\pi} = \left(\tilde{\pi},\hat{\bm{\pi}} \right) =  \pi_{\mcC}(\bm{u}_{t})$. Note that by definition, $\bm{x}_{t+1} =  (\kappa/\tilde{\pi})\hat{\bm{\pi}}$, and $\bm{v}_{t+1} = \left(\langle \bm{f}_{t+1},\bm{x}_{t+1} \rangle / \kappa, - \bm{f}_{t+1} \right)$.
Therefore,
\begin{align*}
\langle \bm{u}_{t} - \pi_{\mcC^{\circ}}(\bm{u}_{t}), \bm{v}_{t+1} \rangle & = \langle \bm{\pi}, \bm{v}_{t+1} \rangle \\
& = \langle \left(\tilde{\pi},\hat{\bm{\pi}} \right), \left(\langle \bm{f}_{t+1},\bm{x}_{t+1} \rangle / \kappa, - \bm{f}_{t+1}  \right) \rangle \\
&  = \langle \left(\tilde{\pi},\hat{\bm{\pi}} \right), \left(\langle \bm{f}_{t+1},(\kappa/\tilde{\pi})\hat{\bm{\pi}} \rangle / \kappa, - \bm{f}_{t+1} \right) \rangle \\
& = \langle \hat{\bm{\pi}}, \bm{f}_{t+1} \rangle - \langle \hat{\bm{\pi}}, \bm{f}_{t+1} \rangle \\
& = 0.
\end{align*}
% We therefore have
% \[ d(\bm{u}_{t+1},\mcC^{\circ})^{2} \leq  \| \bm{u}_{t} - \pi_{\mcC^{\circ}}(\bm{u}_{t}) \|_{2}^{2} + \omega_{t+1}^{2} \| \bm{v}_{t+1} \|_{2}^{2}.\]
Next,
recall that $d(\bm{u}_{t},\mcC^{\circ})^{2}=\| \bm{u}_{t} - \pi_{\mcC^{\circ}}(\bm{u}_{t}) \|_{2}^{2}$. Applying \eqref{eq:proof-lin-avg-step-1} inductively we obtain
\[ d(\bm{u}_{t},\mcC^{\circ})^{2} \leq  \sum_{\tau=1}^{t}  \omega_{\tau}^{2} \| \bm{v}_{\tau} \|_{2}^{2} \leq  L^2 \cdot \sum_{\tau=1}^{t}  \omega_{\tau}^{2},\]
where the last inequality follows from the definition of $\bm{v}_{t}$ and $L$.
\end{proof}
 In the next theorem, we show a result that may seem surprising: 
 \cbap\ allows us to use two separate and different weighting schemes for the decisions in the regret definition and the aggregate payoffs.
%  regrets and can be used for the decisions in the regret than the weights used for generating the sequence of payoffs with \cbap{}. 
This result is analogous to the fact that for the simplex case, \rmp\ is compatible with polynomial averaging schemes on the decision while using constant weights on the aggregate payoffs~\citep{tammelin2015solving,brown2019solving}.

\begin{theorem}\label{th:cbap-linear-averaging-only-policy}
 Consider $\left(\bm{x}_{t} \right)_{t \geq 1}$ generated by \cbap{} with aggregate payoff weights $\left(\omega_{t}\right)_{t \geq 1}$, when regret is measured using decision weights $\left(\theta_{t} \right)_{t \geq 1}$, and $S_{T} = \sum_{t=1}^{T} \theta_{t}$. Assume that $\frac{\theta_{t+1}}{\theta_{t}} \geq \frac{\omega_{t+1}}{\omega_{t}}, \forall \; t \geq 1$.
Then 
\[ \dfrac{\sum_{t=1}^{T} \theta_{t} \langle \bm{f}_{t},\bm{x}_{t} \rangle -  \min_{\bm{x} \in \mcX} \sum_{t=1}^{T} \theta_{t} \langle \bm{f}_{t},\bm{x} \rangle}{S_{T}} \leq \sqrt{2} \kappa L \frac{\theta_{T}}{\omega_{T}} \frac{\sqrt{\sum_{t=1}^{T} \omega^{2}_{t}}}{\sum_{t=1}^{T} \theta_{t}}.\]
\end{theorem}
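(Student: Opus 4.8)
The plan is to fix an arbitrary comparator $\bm{x} \in \mcX$, track the scalar potential $P_t := \langle \bm{u}_t, (\kappa, \bm{x}) \rangle$, and show that it simultaneously lower-bounds a running $\omega$-weighted regret and is controlled by $d(\bm{u}_T, \mcC^\circ)$. The crucial point is that, unlike \cba{}, \cbap{} projects at every step, so $\bm{u}_t \in \mcC$ for all $t$ and I cannot expand $\bm{u}_T$ as $\sum_t \omega_t \bm{v}_t$ the way the proof of Theorem \ref{th:cba-linear-averaging-both} does. Instead I would exploit that projecting onto $\mcC$ can only increase inner products against points of $\mcC$: for $(\kappa, \bm{x}) \in \mcC$, Moreau's decomposition (Statement \ref{lem:statement:u-minus-proj-in-C-polar} of Lemma \ref{lem:conic-opt}) gives $\langle \pi_{\mcC}(\bm{w}), (\kappa,\bm{x}) \rangle = \langle \bm{w}, (\kappa,\bm{x}) \rangle - \langle \pi_{\mcC^\circ}(\bm{w}), (\kappa,\bm{x}) \rangle \geq \langle \bm{w}, (\kappa,\bm{x}) \rangle$, since $\pi_{\mcC^\circ}(\bm{w}) \in \mcC^\circ$ pairs nonpositively with $(\kappa,\bm{x}) \in \mcC$. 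Applying this with $\bm{w} = \bm{u}_t + \omega_{t+1}\bm{v}_{t+1}$ and using $\langle \bm{v}_{t+1}, (\kappa,\bm{x}) \rangle = \langle \bm{f}_{t+1}, \bm{x}_{t+1} - \bm{x} \rangle$ yields the one-step recursion $P_{t+1} \geq P_t + \omega_{t+1}\langle \bm{f}_{t+1}, \bm{x}_{t+1} - \bm{x} \rangle$ (with the convention $\bm{u}_0 = \bm{0}$, $P_0 = 0$).

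Next I would bound the two ingredients. For the size of $\bm{u}_T$, I would reuse the forcing identity \eqref{eq:blackwell-forcing}: since $\bm{u}_t \in \mcC$ we have $\bm{u}_t = \pi_{\mcC}(\bm{u}_t)$, the choice of $\bm{x}_{t+1}$ forces $\langle \bm{u}_t, \bm{v}_{t+1} \rangle = 0$, and nonexpansiveness of $\pi_{\mcC}$ gives $\|\bm{u}_{t+1}\|_2^2 \leq \|\bm{u}_t\|_2^2 + \omega_{t+1}^2\|\bm{v}_{t+1}\|_2^2$; telescoping and $\|\bm{v}_t\|_2 \leq L$ give $d(\bm{u}_T, \mcC^\circ) = \|\bm{u}_T\|_2 \leq L\sqrt{\sum_t \omega_t^2}$, exactly as in the proof of Theorem \ref{th:cba-linear-averaging-both}, where the equality is Statement \ref{lem:statement:norm2-C}. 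I would also note $P_T \leq \|\bm{u}_T\|_2 \|(\kappa,\bm{x})\|_2 \leq \sqrt{2}\kappa \|\bm{u}_T\|_2$. Finally I need $P_t \geq 0$: writing $\bm{u}_t = \alpha_t(\kappa, \bm{x}_{t+1})$ with $\alpha_t \geq 0$ (legitimate since $\bm{u}_t \in \mcC = \mathrm{cone}(\{\kappa\}\times\mcX)$), we get $P_t = \alpha_t(\kappa^2 + \langle \bm{x}_{t+1}, \bm{x} \rangle) \geq 0$ because $\kappa^2 \geq \|\bm{x}_{t+1}\|_2\|\bm{x}\|_2 \geq -\langle \bm{x}_{t+1}, \bm{x} \rangle$.

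The final and most delicate step is converting the $\omega$-weighted telescoping bound into a $\theta$-weighted regret bound. Let $c_t := \theta_t/\omega_t$; the hypothesis $\theta_{t+1}/\theta_t \geq \omega_{t+1}/\omega_t$ says exactly that $(c_t)_t$ is nondecreasing. Multiplying $\omega_t\langle \bm{f}_t, \bm{x}_t - \bm{x} \rangle \leq P_t - P_{t-1}$ by $c_t \geq 0$ and summing, I would apply Abel summation to get $\sum_{t=1}^T \theta_t \langle \bm{f}_t, \bm{x}_t - \bm{x} \rangle \leq c_T P_T + \sum_{t=1}^{T-1}(c_t - c_{t+1})P_t$. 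Here both sign facts pay off: $c_t - c_{t+1} \leq 0$ by monotonicity and $P_t \geq 0$, so every cross term is nonpositive and can be dropped, leaving $\sum_t \theta_t \langle \bm{f}_t, \bm{x}_t - \bm{x} \rangle \leq c_T P_T = (\theta_T/\omega_T) P_T$. Substituting the bounds on $P_T$ and $d(\bm{u}_T,\mcC^\circ)$, maximizing over $\bm{x} \in \mcX$, and dividing by $S_T = \sum_t \theta_t$ gives the claimed inequality. I expect the summation-by-parts bookkeeping — getting the sign of $(c_t - c_{t+1})P_t$ right and recognizing that the monotone-ratio hypothesis is precisely what makes the cross terms vanish — to be the main conceptual hurdle, whereas the norm bound on $\bm{u}_T$ is essentially identical to the argument already used for \cba{}.
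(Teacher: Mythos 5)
Your proof is correct, and while its skeleton matches the paper's---a one-step inequality coming from the projection step, the monotone-ratio hypothesis entering through summation by parts, and the standard forcing-plus-nonexpansiveness bound on $\|\bm{u}_{T}\|_{2}$---the execution is genuinely different: you scalarize the entire argument by pairing with a fixed comparator point $(\kappa,\bm{x})\in\mcC$, whereas the paper works with vector inequalities in the partial order $\geq_{\mcC^{\circ}}$. Concretely, your recursion $P_{t+1}\geq P_{t}+\omega_{t+1}\langle\bm{f}_{t+1},\bm{x}_{t+1}-\bm{x}\rangle$ is exactly the paper's inequality $\omega_{t+1}\bm{v}_{t+1}\geq_{\mcC^{\circ}}\bm{u}_{t+1}-\bm{u}_{t}$ tested against $(\kappa,\bm{x})$; your observation $P_{t}\geq 0$ is the scalar content of Statement \ref{lem:statement:negative-cone-in-polar} of Lemma \ref{lem:conic-opt} (namely $\bm{u}_{t}\in\mcC\Rightarrow-\bm{u}_{t}\in\mcC^{\circ}$); and your Abel summation, where the terms $(c_{t}-c_{t+1})P_{t}\leq 0$ are dropped, mirrors the paper's step of discarding $-\sum_{t=1}^{T-1}\left(\theta_{t+1}/\omega_{t+1}-\theta_{t}/\omega_{t}\right)\bm{u}_{t}\in\mcC^{\circ}$ via Statement \ref{lem:statement:partial order}. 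What your route buys is economy of means: you need only Moreau's decomposition and Cauchy--Schwarz, and you never invoke the distance characterizations (Statements \ref{lem:statement:dist-to-cone}, \ref{lem:statement:norm2-C}, \ref{lem:statement:x-y-dist}) that the paper uses to convert its final conic inequality into a regret bound. What the paper's route buys is a stronger intermediate object: the single uniform vector inequality $\sum_{t=1}^{T}\theta_{t}\bm{v}_{t}\geq_{\mcC^{\circ}}(\theta_{T}/\omega_{T})\bm{u}_{T}$, from which all comparators are handled at once, whereas you handle them one at a time (equally valid, since your final bound is comparator-independent). Two minor remarks: your convention $\bm{u}_{0}=\bm{0}$, which forces $\bm{u}_{1}=\pi_{\mcC}(\omega_{1}\bm{v}_{1})\in\mcC$, is actually cleaner than the paper's unprojected initialization of $\bm{u}_{1}$, on which both proofs implicitly rely when asserting $\bm{u}_{t}\in\mcC$ for all $t$; and you inherit the paper's own benign constant-factor glossing in asserting $\|\bm{v}_{t}\|_{2}\leq L$ (Cauchy--Schwarz only gives $\|\bm{v}_{t}\|_{2}\leq\sqrt{2}L$), so no new gap is introduced on your side.
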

Our proof heavily relies on the sequence of payoffs belonging to the cone $\mcC$ at every iteration ($\bm{u}_{t} \in \mcC, \forall \; t \geq 1$), and for this reason it does not extend to \cba. We also note that the use of conic optimization somewhat simplifies the argument compared to the proof that \rmp{} is compatible with polynomial averaging on decisions and uniform weights on payoffs. 
\begin{proof}[Proof of Theorem \ref{th:cbap-linear-averaging-only-policy}]
Recall that $\bm{v}_{t} = \left(\langle \bm{f}_{t},\bm{x}_{t} \rangle / \kappa, - \bm{f}_{t} \rangle \right)$. By construction and following the same argument as for the proof of Theorem \ref{th:cba-linear-averaging-both}, we have
\begin{align}
\sum_{t=1}^{T} \theta_{t} \langle \bm{f}_{t},\bm{x}_{t} \rangle -  \min_{\bm{x} \in \mcX} \sum_{t=1}^{T} \theta_{t} \langle \bm{f}_{t},\bm{x} \rangle \leq \sqrt{2} \kappa \cdot d \left(\sum_{t=1}^{T} \theta_{t} \bm{v}_{t},\mcC^{\circ} \right) . 
\end{align}
Additionally,  we always have
\begin{equation}\label{eq:v-u-C-polar}
 \omega_{t+1}\bm{v}_{t+1} \geq_{\mcC^{\circ}} \bm{u}_{t+1} - \bm{u}_{t}.
\end{equation}
This is because
\begin{align*}
\omega_{t+1}\bm{v}_{t+1} - \bm{u}_{t+1} + \bm{u}_{t} & = \bm{u}_{t} +\omega_{t+1}\bm{v}_{t+1} - \bm{u}_{t+1} \\
& = \bm{u}_{t} +\omega_{t+1}\bm{v}_{t+1} -\pi_{\mcC}\left(\bm{u}_{t} +\omega_{t+1}\bm{v}_{t+1}\right)\\
& = \pi_{\mcC^{\circ}}\left(\bm{u}_{t} +\omega_{t+1}\bm{v}_{t+1}\right) \in \mcC^{\circ}.
\end{align*}
Therefore, multiplying \eqref{eq:v-u-C-polar} by $\theta_{t+1}$ and dividing by $\omega_{t+1}$, we obtain
\[ \theta_{t+1}\bm{v}_{t+1} \geq_{\mcC^{\circ}} \frac{\theta_{t+1}}{\omega_{t+1}}(\bm{u}_{t+1} -  \bm{u}_{t}).\]
Reformulating the right-hand side we obtain
\[ \theta_{t+1} \bm{v}_{t+1} \geq_{\mcC^{\circ}} \frac{\theta_{t+1}}{\omega_{t+1}}\bm{u}_{t+1} - \frac{\theta_{t}}{\omega_{t}} \bm{u}_{t} - \left(\frac{\theta_{t+1}}{\omega_{t+1}}-\frac{\theta_{t}}{\omega_{t}}\right)\bm{u}_{t}.\]
Summing up the previous inequalities from $t=1$ to $t=T-1$ and using $\bm{u}_{1}=\bm{v}_{1}$ we obtain
\[ \sum_{t=1}^{T} \theta_{t} \bm{v}_{t} \geq_{\mcC^{\circ}} \frac{\theta_{T}}{\omega_{T}}\bm{u}_{T} - \sum_{t=1}^{T-1} \left(\frac{\theta_{t+1}}{\omega_{t+1}}-\frac{\theta_{t}}{\omega_{t}}\right) \bm{u}_{t}.\]
Note that $\sum_{t=1}^{T-1} \left(\frac{\theta_{t+1}}{\omega_{t+1}}-\frac{\theta_{t}}{\omega_{t}}\right) \bm{u}_{t} \in \mcC$, because $\frac{\theta_{t+1}}{\omega_{t+1}}-\frac{\theta_{t}}{\omega_{t}} \geq 0$. Therefore, Statement \ref{lem:statement:negative-cone-in-polar} in Lemma \ref{lem:conic-opt} shows that $- \sum_{t=1}^{T-1} \left(\frac{\theta_{t+1}}{\omega_{t+1}}-\frac{\theta_{t}}{\omega_{t}} \right) \bm{u}_{t} \in \mcC^{\circ}$.
Now, by applying \eqref{eq:partial-order-minus-x-prime} in Statement~\ref{lem:statement:partial order} of Lemma \ref{lem:conic-opt},  we have
\[ \sum_{t=1}^{T} \theta_{t} \bm{v}_{t} \geq_{\mcC^{\circ}} \frac{\theta_{T}}{\omega_{T}}\bm{u}_{T} - \sum_{t=1}^{T-1} \left(\frac{\theta_{t+1}}{\omega_{t+1}}-\frac{\theta_{t}}{\omega_{t}}\right) \bm{u}_{t} \Rightarrow \sum_{t=1}^{T} \theta_{t} \bm{v}_{t} \geq_{\mcC^{\circ}} \frac{\theta_{T}}{\omega_{T}} \bm{u}_{T}. \]
Since $ \frac{\theta_{T}}{\omega_{T}} \bm{u}_{T} \in \mcC$, Statement \ref{lem:statement:x-y-dist} shows that
\begin{align}
   d \left(\sum_{t=1}^{T} \theta_{t} \bm{v}_{t} ,\mcC^{\circ} \right) \leq \bigg\| \frac{\theta_{T}}{\omega_{T}} \bm{u}_{T} \bigg\|_{2}.
   \label{eq:iterate weighted payoff less than ut}
\end{align}
By construction $\bm{u}_{T}$ is the sequence of aggregated payoffs generated by \cbap{} with weights $\left(\omega_{t}\right)_{t \geq 1}$. We now show that $d(\bm{u}_{T},\mcC^{\circ}) = \| \bm{u}_{T} \|_{2} =  O\left(L\sqrt{\sum_{t=1}^{T} \omega^{2}_{t}}\right)$.
We have
\begin{align}
 \| \bm{u}_{t+1} \|_{2}^{2}
& = \| \pi_{\mcC}\left( \bm{u}_{t} + \omega_{t+1} \bm{v}_{t+1} \right) \|_{2}^{2} \nonumber \\
& \leq \| \bm{u}_{t} + \omega_{t+1} \bm{v}_{t+1} \|_{2}^{2} \label{eq:norm-proj-smaller-than-norm}
\end{align}
where \eqref{eq:norm-proj-smaller-than-norm} follows from Statement \ref{lem:statement:u-minus-proj-in-C-polar} in Lemma \ref{lem:conic-opt}.
Therefore,
\begin{align*}
\| \bm{u}_{t+1}\|_{2}^{2} & \leq  \| \bm{u}_{t} \|^{2}_{2} + \omega_{t+1}^{2} \| \bm{v}_{t+1} \|^{2}_{2} + 2 \omega_{t+1} \langle \bm{u}_{t},\bm{v}_{t+1} \rangle .
\end{align*}
By construction and for the same reason as for \eqref{eq:blackwell-forcing}, $\langle \bm{u}_{t},\bm{v}_{t+1} \rangle=0$. 
Therefore, we have the recursion
\[ \| \bm{u}_{t+1}\|_{2}^{2} \leq \| \bm{u}_{t} \|^{2}_{2} + \omega_{t+1}^{2} \| \bm{v}_{t+1} \|^{2}_{2}.\]
By telescoping the inequality above we obtain
\[ \| \bm{u}_{t}\|_{2}^{2} \leq  \sum_{\tau=1}^{t} \omega_{\tau}^{2} \| \bm{v}_{\tau} \|^{2}_{2} .\]
By definition of $L$, we conclude that
\[ \| \bm{u}_{T} \|_{2} \leq L\sqrt{\sum_{t=1}^{T} \omega^{2}_{t}}.\]
Therefore, by \eqref{eq:iterate weighted payoff less than ut},  $d(\sum_{t=1}^{T} \theta_{t} \bm{v}_{t} ,\mcC^{\circ}) \leq L \frac{\theta_{T}}{\omega_{T}} \sqrt{\sum_{t=1}^{T} \omega^{2}_{t}}  .$
This shows that 
\[ \dfrac{\sum_{t=1}^{T} \theta_{t} \langle \bm{f}_{t},\bm{x}_{t} \rangle -  \min_{\bm{x} \in \mcX} \sum_{t=1}^{T} \theta_{t} \langle \bm{f}_{t},\bm{x} \rangle}{S_{T}} \leq \sqrt{2} \kappa L \frac{\theta_{T}}{\omega_{T}} \frac{\sqrt{\sum_{t=1}^{T} \omega^{2}_{t}}}{\sum_{t=1}^{T} \theta_{t}}.\]
\end{proof}

\subsection{Convergence bounds for saddle-point problems}\label{sec:regret-bound-duality-gap}
In this section, we show how the regret bounds from the previous section translate into convergence rates for solving convex-concave saddle-point problems in the repeated game framework.
% Finally, we show that alternation is compatible with \cba\ and \cbap, and give lower bounds on the improvement it provides.
In particular, the following theorem gives the convergence rate of \cbap{} and \cba{} for solving saddle-point problems of the form ~\eqref{eq:spp}, based on our bounds on the regret of each player under various weighting schemes. The proof is in Appendix \ref{app:proof-folk-final}.
\begin{theorem}\label{th:folk-final}
Let $L = \max \{ L_{x},L_{y}\}$ defined in \eqref{eq:definition-Lx-Lx} and $\kappa = \max \{ \max \{\| \bm{x} \|_{2},\| \bm{y} \|_{2}\} \; \vert \; \bm{x} \in \mcX,\bm{y} \in \mcY\}$.
\begin{enumerate}
\item 
Let $\left( \bar{\bm{x}}_{T},\bar{\bm{y}}_{T} \right) = \sum_{t=1}^{T} \omega_{t} \left(\bm{x}_{t}, \bm{y}_{t} \right)/S_{T},$ where $\left(\bm{x}_{t} \right)_{t \geq 1}, \left(\bm{y}_{t}\right)_{t \geq 1}$ are generated by the repeated game framework with \cba{} with weights $\left(\omega_{\tau}\right)_{t \geq 1}$ on both decisions and payoffs and $S_{T} = \sum_{t=1}^{T} \omega_{t}$. Assume that $\omega_{t} = t^{p}, \forall t \geq 1$.
Then 
\[ \max_{\bm{y} \in \mcY} F(\bar{\bm{x}}_{T},\bm{y}) - \min_{\bm{x} \in \mcX} F(\bm{x},\bar{\bm{y}}_{T}) = O \left(\frac{\kappa L \sqrt{p+1} }{\sqrt{T}} \right). \]
\item 
Let $p,q \in \mbN$ with $q \geq p$. Let $\left( \bar{\bm{x}}_{T},\bar{\bm{y}}_{T} \right) = \sum_{t=1}^{T} \theta_{t} \left(\bm{x}_{t}, \bm{y}_{t} \right)/S_{T},$ where $\left(\bm{x}_{t} \right)_{t \geq 1}, \left(\bm{y}_{t}\right)_{t \geq 1}$ are generated by the repeated game framework with \cbap{} with aggregate payoff weights $\left(\omega_{t}\right)_{t \geq 1}$, and decision weights $\left(\theta_{t}\right)_{t \geq 1}$  and $S_{T} = \sum_{t=1}^{T} \theta_{t}$. Assume that $\theta_{t} = t^{q}, \omega_{t} = t^p, \forall t \geq 1$.
Then 
\[ \max_{\bm{y} \in \mcY} F(\bar{\bm{x}}_{T},\bm{y}) - \min_{\bm{x} \in \mcX} F(\bm{x},\bar{\bm{y}}_{T}) = O \left( \frac{ \kappa L (q+1) }{\sqrt{p+1}\sqrt{T}} \right). \]
\end{enumerate}
\end{theorem}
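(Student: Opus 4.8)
The plan is to combine a weighted version of the folk theorem (Theorem~\ref{th:-folk-theorem}) with the per-player regret bounds already established in Theorem~\ref{th:cba-linear-averaging-both} and Theorem~\ref{th:cbap-linear-averaging-only-policy}, and then to substitute the power-law weights and estimate the resulting sums.

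First I would establish the weighted folk theorem. Define the weighted regret of the $\bm{x}$-player as $R^{\omega}_{T,\bm{x}} = \sum_{t=1}^T \omega_t \langle \bm{f}_t, \bm{x}_t\rangle - \min_{\bm x \in \mcX} \sum_{t=1}^T \omega_t \langle \bm{f}_t, \bm x\rangle$, and analogously $R^{\omega}_{T,\bm y}$ for the $\bm y$-player. Using that $F$ is convex-concave and subdifferentiable, for each $t$ and any $\bm x,\bm y$ one has $F(\bm x_t,\bm y_t) - F(\bm x,\bm y_t) \le \langle \bm f_t, \bm x_t - \bm x\rangle$ and $F(\bm x_t,\bm y) - F(\bm x_t,\bm y_t) \le \langle \bm g_t, \bm y - \bm y_t\rangle$. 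Summing each against the weights $\omega_t$ and adding gives $\sum_t \omega_t [F(\bm x_t,\bm y) - F(\bm x,\bm y_t)] \le R^{\omega}_{T,\bm x} + R^{\omega}_{T,\bm y}$. Applying Jensen's inequality to the weighted-average iterates --- convexity in the first argument gives $F(\bar{\bm x}_T,\bm y) \le \tfrac{1}{S_T}\sum_t \omega_t F(\bm x_t,\bm y)$, concavity in the second gives $F(\bm x,\bar{\bm y}_T) \ge \tfrac{1}{S_T}\sum_t \omega_t F(\bm x,\bm y_t)$ --- and then maximizing over $\bm y$ and minimizing over $\bm x$ yields
\[ \max_{\bm y \in \mcY} F(\bar{\bm x}_T,\bm y) - \min_{\bm x \in \mcX} F(\bm x,\bar{\bm y}_T) \le \frac{R^{\omega}_{T,\bm x} + R^{\omega}_{T,\bm y}}{S_T}. \]
This mirrors the proof of Theorem~\ref{th:-folk-theorem} with uniform weights replaced by $\omega_t$, and the identical statement holds verbatim with decision weights $\theta_t$ in place of $\omega_t$, which is what Part~2 requires.

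For Part~1, I would apply the above with $\omega_t = t^p$ and bound each player's weighted regret by Theorem~\ref{th:cba-linear-averaging-both}, so that $S_T = \sum_t \omega_t$ and the duality gap is at most $2\sqrt 2\,\kappa L\,\sqrt{\sum_t \omega_t^2}\big/\sum_t \omega_t$. The remaining work is purely to estimate the power sums. Using the integral comparisons $\sum_{t=1}^T t^{2p} = \Theta(T^{2p+1}/(2p+1))$ and $\sum_{t=1}^T t^p = \Theta(T^{p+1}/(p+1))$, the ratio collapses to $\Theta\big(\tfrac{p+1}{\sqrt{2p+1}}\,T^{-1/2}\big)$, and since $\tfrac{p+1}{\sqrt{2p+1}} = \sqrt{p+1}\cdot\sqrt{\tfrac{p+1}{2p+1}} \le \sqrt{p+1}$, this is $O(\kappa L\sqrt{p+1}/\sqrt T)$, as claimed.

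For Part~2, I would first check the compatibility hypothesis of Theorem~\ref{th:cbap-linear-averaging-only-policy}: with $\theta_t = t^q$, $\omega_t = t^p$, and $q \ge p$ one has $\theta_{t+1}/\theta_t = ((t+1)/t)^q \ge ((t+1)/t)^p = \omega_{t+1}/\omega_t$, so the theorem applies to each player and bounds the $\theta$-weighted regret, giving a duality-gap bound $2\sqrt 2\,\kappa L\,\tfrac{\theta_T}{\omega_T}\tfrac{\sqrt{\sum_t \omega_t^2}}{\sum_t \theta_t}$ via the weighted folk theorem (now with $S_T = \sum_t \theta_t$). Using $\theta_T/\omega_T = T^{q-p}$ together with the same estimates plus $\sum_t t^q = \Theta(T^{q+1}/(q+1))$ collapses this to $\Theta\big(\tfrac{q+1}{\sqrt{2p+1}}\,T^{-1/2}\big)$, and since $\sqrt{2p+1}\ge\sqrt{p+1}$ this is $O\big(\kappa L (q+1)/(\sqrt{p+1}\sqrt T)\big)$. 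The main obstacle is not any single hard inequality but the careful bookkeeping of the $p$- and $q$-dependent constants through the power-sum asymptotics so that the final rates match the stated clean forms; the one genuinely content-bearing prerequisite is the weighted folk theorem, and the easy-to-overlook check is the monotone-ratio hypothesis in Part~2.
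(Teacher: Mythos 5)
Your proposal is correct and follows essentially the same route as the paper's own proof: a weighted folk theorem obtained from convex-concavity and the subgradient inequalities, followed by the regret bounds of Theorem~\ref{th:cba-linear-averaging-both} (Part~1) and Theorem~\ref{th:cbap-linear-averaging-only-policy} (Part~2), and finally integral estimates of the power sums $\sum_t t^p$, $\sum_t t^{2p}$, $\sum_t t^q$. Your explicit verification of the hypothesis $\theta_{t+1}/\theta_t \geq \omega_{t+1}/\omega_t$ for $q \geq p$ is a small bookkeeping step the paper leaves implicit, but it does not change the argument.
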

We note that larger weights lead to slightly worse worst-case convergence guarantees. In contrast to this, we will see in our numerical simulations that the strongest empirical performances for \cbap{} are obtained for $q=1,p=0$, i.e., linear weights on the decisions and uniform weights on the payoffs.

Let us compare our bounds with the regret bounds of classical first-order methods. We  consider $p,q=0$.
\cba{} and \cbap{} achieve $O\left(\kappa L /\sqrt{T}\right)$ average regret, whereas online mirror descent (OMD)~\citep{nemirovsky1983problem,BenTal-Nemirovski} and follow-the-regularized-leader (FTRL)~\citep{abernethy2009competing,mcmahan2011follow} achieve $O\left(\Omega L /\sqrt{T}\right)$ average regret, where $\Omega = \max \{ \| \bm{x} - \bm{x}'\|_{2} \rvert \bm{x},\bm{x}' \in \mcX\}.$ 
We can always recenter $\mcX$ to contain $\bm{0}$, in which case the bounds for OMD/FTRL and \cbap{} are equivalent since $\kappa \leq \Omega \leq 2 \kappa$. 
The bound on the average regret for \emph{optimistic} OMD (OOMD, \cite{chiang2012online}) and \emph{optimistic} FTRL (OFTRL, \cite{rakhlin2013online})  is $O\left(\Omega^{2} L /T\right)$ in the repeated game framework, a priori better than the bound for \cbap{} as regards the number of iterations $T$. Nonetheless, we will see in Section \ref{sec:simu} that the empirical performance of \cbap{} is better than that of $O(1/T)$ methods.  
A similar situation occurs for \rmp{} compared to OOMD and OFTRL for solving extensive-form games such as poker~\citep{farina2019optimistic,kroer2018faster}.

\subsection{Improved convergence bounds using alternation}\label{sec:regret-vound-duality-gap-alternating-updates}
Alternation is a simple variation of the repeated game framework from Section \ref{sec:game-setup}. 
Alternation is known to lead to significant speedup for \rmp{}~\citep{tammelin2015solving}, and we will observe in our simulations (Section~\ref{sec:simu}) that this holds for \cbap\ as well.
In the repeated game framework with alternation, at iteration $t$, the second player is provided with the decision $\bm{x}_{t}$ of the first player for iteration $t$. 
Because alternation is defined the same way for both \cba{} and \cbap, we omit the subscripts in \chp{} and \upp{}.
In particular, at iteration $t$ of the repeated game framework with alternation, the players choose $\bm{x}_{t}$ and $\bm{y}_{t}$ as follows:
 \begin{enumerate}
 \item Both players start with aggregate payoffs $\bm{u}^{x}_{t-1},\bm{u}^{y}_{t-1}$.
 \item The first player chooses a decision $\bm{x}_{t}$ based on $\bm{u}^{x}_{t-1}$:
 \[\bm{x}_{t} = \chp(\bm{u}^{x}_{t-1}).\]
 \item For $\bm{g}_{t-1} = \partial_{y} F(\bm{x}_{t},\bm{y}_{t-1})$, the second player updates its aggregate payoff:
 \[  \bm{u}^{y}_{t} = \upp\left(\bm{u}_{t-1}^{y}, \bm{y}_{t-1},\bm{g}_{t-1},\omega_{t}\right).\]
 \item The second player chooses a decision $\bm{y}_{t}$ based on $\bm{u}^{y}_{t}$:  \[\bm{y}_{t} = \chp(\bm{u}^{y}_{t}).\]
 \item For $\bm{f}_{t} = \partial_{x} F(\bm{x}_{t},\bm{y}_{t} )$, the first player updates its aggregate payoff:
\[  \bm{u}^{x}_{t} = \upp\left(\bm{u}_{t-1}^{x}, \bm{x}_{t},\bm{f}_{t},\omega_{t}\right).\]
 \end{enumerate}

Recall that we use the repeated game framework to solve \eqref{eq:spp} because we can bound the duality gap by the sum of the average regrets of each player using Theorem~\ref{th:-folk-theorem}. It is known that in the repeated game framework with alternation, it is possible to construct decisions such that Theorem \ref{th:-folk-theorem} fails to hold, because of the mismatch in the sequences of decisions of the players~\citep{farina2019online}. 
That said, it was later shown that a modified version of Theorem~\ref{th:-folk-theorem} holds~\citep{burch2019revisiting}.
Here we state a more general version of that result, which was first shown in a set of lecture notes~\citep{kroer2020ieor8100}. In particular, the following bound holds on the duality gap. For the sake of completeness, we provide the proof in Appendix \ref{app:proof-folk-alternation}.
 \begin{theorem}\label{th:folk-theorem-alternation}
Consider some weights $\left(\theta_{t}\right)_{t \geq 1}$  and $S_{T} = \sum_{t=1}^{T} \theta_{t+1}$. 
Let $\left( \bar{\bm{x}}_{T},\bar{\bm{y}}_{T} \right) = \sum_{t=1}^{T} \theta_{t+1} \left(\bm{x}_{t+1}, \bm{y}_{t} \right)/S_{T},$ where $\left(\bm{x}_{t} \right)_{t \geq 1}, \left(\bm{y}_{t}\right)_{t \geq 1}$ are generated by the repeated game framework with alternation.
Then 
\begin{align*}
\max_{\bm{y} \in \mcY} F(\bar{\bm{x}}_{T},\bm{y}) - \min_{\bm{x} \in \mcX} F(\bm{x},\bar{\bm{y}}_{T}) & \leq   \frac{1}{S_{T}} \left( \max_{\bm{y} \in \mcY}  \theta_{t+1} \sum_{t=1}^{T} \langle \bm g_t ,\bm{y} \rangle - \sum_{t=1}^{T}  \theta_{t+1} \langle \bm g_t,\bm{y}_{t} \rangle  \right)\\
& +  \frac{1}{S_{T}}\left( \sum_{t=1}^{T}  \theta_{t+1} \langle \bm f_t, \bm x_t \rangle - \min_{\bm{x} \in \mcX} \sum_{t=1}^{T} \theta_{t+1} \langle \bm f_t, \bm x \rangle \right)\\
& + \frac{1}{S_{T}} \left(\sum_{t=1}^{T} \theta_{t+1} \left(F(\bm{x}_{t+1},\bm{y}_{t}) - F(\bm{x}_{t},\bm{y}_{t})\right) \right).
\end{align*}
\end{theorem}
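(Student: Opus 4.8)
The plan is to follow the same Jensen-plus-linearization scheme used in the proof of Theorem~\ref{th:-folk-theorem}, but keeping careful track of the index shift created by alternation. First I would note that $\bar{\bm{x}}_{T}$ is the $\theta$-weighted convex combination of the iterates $(\bm{x}_{t+1})_{t=1}^{T}$ while $\bar{\bm{y}}_{T}$ is the $\theta$-weighted convex combination of $(\bm{y}_{t})_{t=1}^{T}$. Using convexity of $\bm{x} \mapsto F(\bm{x},\bm{y})$ and concavity of $\bm{y} \mapsto F(\bm{x},\bm{y})$, Jensen's inequality gives $F(\bar{\bm{x}}_{T},\bm{y}) \leq \frac{1}{S_{T}}\sum_{t=1}^{T} \theta_{t+1} F(\bm{x}_{t+1},\bm{y})$ and $F(\bm{x},\bar{\bm{y}}_{T}) \geq \frac{1}{S_{T}}\sum_{t=1}^{T} \theta_{t+1} F(\bm{x},\bm{y}_{t})$, so the duality gap is bounded above by $\frac{1}{S_{T}}\bigl[\max_{\bm{y}} \sum_{t} \theta_{t+1} F(\bm{x}_{t+1},\bm{y}) - \min_{\bm{x}} \sum_{t} \theta_{t+1} F(\bm{x},\bm{y}_{t})\bigr]$.

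Second, I would linearize each term at the iterate dictated by the alternation protocol. The crucial bookkeeping point is that the second player's subgradient is $\bm{g}_{t} \in \partial_{\bm{y}} F(\bm{x}_{t+1},\bm{y}_{t})$, evaluated at the \emph{already-updated} $\bm{x}_{t+1}$, whereas the first player's subgradient is $\bm{f}_{t} \in \partial_{\bm{x}} F(\bm{x}_{t},\bm{y}_{t})$. Concavity at $(\bm{x}_{t+1},\bm{y}_{t})$ gives $F(\bm{x}_{t+1},\bm{y}) \leq F(\bm{x}_{t+1},\bm{y}_{t}) + \langle \bm{g}_{t},\bm{y}-\bm{y}_{t}\rangle$; taking the weighted sum and the maximum over $\bm{y}$ isolates the $\bm{y}$-player regret $\max_{\bm{y}} \sum_{t} \theta_{t+1}\langle \bm{g}_{t},\bm{y}\rangle - \sum_{t} \theta_{t+1}\langle \bm{g}_{t},\bm{y}_{t}\rangle$ on top of a leftover $\sum_{t} \theta_{t+1} F(\bm{x}_{t+1},\bm{y}_{t})$. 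Symmetrically, convexity at $(\bm{x}_{t},\bm{y}_{t})$ yields $F(\bm{x},\bm{y}_{t}) \geq F(\bm{x}_{t},\bm{y}_{t}) + \langle \bm{f}_{t},\bm{x}-\bm{x}_{t}\rangle$, so $-\min_{\bm{x}}\sum_{t} \theta_{t+1} F(\bm{x},\bm{y}_{t})$ is bounded by the $\bm{x}$-player regret $\sum_{t} \theta_{t+1}\langle \bm{f}_{t},\bm{x}_{t}\rangle - \min_{\bm{x}}\sum_{t}\theta_{t+1}\langle \bm{f}_{t},\bm{x}\rangle$ plus a leftover $-\sum_{t} \theta_{t+1} F(\bm{x}_{t},\bm{y}_{t})$.

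Finally I would add the two bounds. The two leftover terms combine into $\sum_{t} \theta_{t+1}\bigl(F(\bm{x}_{t+1},\bm{y}_{t}) - F(\bm{x}_{t},\bm{y}_{t})\bigr)$, which is exactly the third term in the statement; dividing through by $S_{T}$ completes the argument. I expect the main obstacle to be purely in the bookkeeping rather than in any inequality: one must invoke the alternation protocol to correctly identify that $\bm{g}_{t}$ lives at $(\bm{x}_{t+1},\bm{y}_{t})$ while $\bm{f}_{t}$ lives at $(\bm{x}_{t},\bm{y}_{t})$. In the non-alternating framework these two evaluation points coincide at $(\bm{x}_{t},\bm{y}_{t})$, the leftover terms cancel, and one recovers Theorem~\ref{th:-folk-theorem} with no residual. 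The residual $\sum_{t} \theta_{t+1}\bigl(F(\bm{x}_{t+1},\bm{y}_{t}) - F(\bm{x}_{t},\bm{y}_{t})\bigr)$ is precisely the price of the mismatch introduced by alternation, and tracking it correctly is the only delicate step.
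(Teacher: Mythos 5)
Your proposal is correct and follows essentially the same argument as the paper's proof: the same Jensen step to pass to the weighted sums $\sum_{t}\theta_{t+1}F(\bm{x}_{t+1},\bm{y})$ and $\sum_{t}\theta_{t+1}F(\bm{x},\bm{y}_{t})$, the same linearizations with $\bm{g}_{t}\in\partial_{\bm{y}}F(\bm{x}_{t+1},\bm{y}_{t})$ and $\bm{f}_{t}\in\partial_{\bm{x}}F(\bm{x}_{t},\bm{y}_{t})$, and the same residual term capturing the alternation mismatch. The only cosmetic difference is that the paper first inserts $\pm\sum_{t}\theta_{t+1}F(\bm{x}_{t+1},\bm{y}_{t})$ and $\pm\sum_{t}\theta_{t+1}F(\bm{x}_{t},\bm{y}_{t})$ and then applies the subgradient inequalities, whereas you linearize first and let the residual emerge from the leftover terms.
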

From Theorem \ref{th:folk-theorem-alternation}, we see that alternation guarantees convergence to a solution of \eqref{eq:spp}, if 
\begin{equation}\label{eq:alternation-is-improving}
\sum_{t=1}^{t+1} \theta_{t+1} \left(F(\bm{x}_{t+1},\bm{y}_{t}) - F(\bm{x}_{t},\bm{y}_{t})\right) \leq 0.
\end{equation}
In the framework of \rmm{} and \rmp, we have $\mcX = \Delta(n), \mcY = \Delta(m)$ and the objective function is bilinear. In this case, it is shown in \cite{burch2019revisiting} that \eqref{eq:alternation-is-improving} holds. In particular, for any $t \in [T]$, it holds that $F(\bm{x}_{t+1},\bm{y}_{t}) - F(\bm{x}_{t},\bm{y}_{t}) \leq 0$.
We provide the following stronger result for \cbap{} in the case of an objective function $F$ that is linear in one of the two variables, with any convex compact decision sets $\mcX$ and $\mcY$. The proof is presented in Appendix \ref{app:proof-alternation-is-improving}.
\begin{theorem}\label{th:alternation-works-bilinear-case}
Assume that $\left(\bm{x},\bm{y} \right) \mapsto F\left(\bm{x},\bm{y} \right)$ is linear in $\bm{x}$.
\begin{enumerate}
\item In the framework of Theorem \ref{th:folk-theorem-alternation}, suppose that $\left(\bm{x}_{t}\right)_{t \geq 1},\left(\bm{y}_{t}\right)_{t \geq 1}$ are generated by \cbap{} with weights $\left(\omega_{t}\right)_{t \geq 1}$ on the payoffs. We have, for $t \geq 1$,
\[F(\bm{x}_{t+1},\bm{y}_{t})- F(\bm{x}_{t},\bm{y}_{t}) \leq - \frac{\kappa}{\omega_{t} \cdot \|\bm{u}_{t}^{x} \|_{\infty}} \| \bm{u}_{t}^{x} - \bm{u}_{t-1}^{x} \|_{2}^{2} .\]
\item In the framework of Theorem \ref{th:folk-theorem-alternation}, suppose that $\left(\bm{x}_{t}\right)_{t \geq 1},\left(\bm{y}_{t}\right)_{t \geq 1}$ are generated by \cba{} with weights $\left(\omega_{t}\right)_{t \geq 1}$ on the payoffs. We have, for $t \geq 1$,
\[F(\bm{x}_{t+1},\bm{y}_{t}) -  F(\bm{x}_{t},\bm{y}_{t}) \leq - \frac{\kappa}{ \omega_{t} \cdot  \|\pi_{\mcC}\left(\bm{u}_{t}^{x}\right) \|_{\infty}} \| \pi_{\mcC}\left(\bm{u}_{t}^{x}\right) - \pi_{\mcC}\left(\bm{u}_{t-1}^{x}\right) \|_{2}^{2} .\]
\end{enumerate}
\end{theorem}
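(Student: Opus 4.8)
The plan is to exploit the linearity of $F$ in $\bm{x}$ to reduce each claimed inequality to a statement about the inner product $\langle \bm{f}_t, \bm{x}_{t+1} - \bm{x}_t \rangle$, and then to convert that inner product into a quantity controlled by the cone geometry of Blackwell's update. Since $F$ is linear in its first argument, the subgradient $\bm{f}_t \in \partial_{\bm{x}} F(\cdot, \bm{y}_t)$ is independent of the first argument, so $F(\bm{x}_{t+1}, \bm{y}_t) - F(\bm{x}_t, \bm{y}_t) = \langle \bm{f}_t, \bm{x}_{t+1} - \bm{x}_t \rangle$. To treat both algorithms uniformly I would set $\bm{p}_t = \bm{u}^x_t$ in the \cbap{} case (where $\bm{u}^x_t \in \mcC$ already holds) and $\bm{p}_t = \pi_{\mcC}(\bm{u}^x_t)$ in the \cba{} case, so that in both cases $\bm{x}_{t+1} = (\kappa / \tilde{p}_t)\hat{\bm{p}}_t$ with $\bm{p}_t = (\tilde{p}_t, \hat{\bm{p}}_t)$. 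Expanding $\bm{v}_t = (\langle \bm{f}_t, \bm{x}_t \rangle / \kappa, -\bm{f}_t)$ and substituting the expression for $\bm{x}_{t+1}$, a direct computation yields the key identity $\langle \bm{f}_t, \bm{x}_{t+1} - \bm{x}_t \rangle = -(\kappa / \tilde{p}_t) \langle \bm{v}_t, \bm{p}_t \rangle$.

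The second step is to lower bound $\omega_t \langle \bm{v}_t, \bm{p}_t \rangle$ by $\| \bm{p}_t - \bm{p}_{t-1} \|_2^2$. The starting point is the Blackwell forcing identity $\langle \bm{p}_{t-1}, \bm{v}_t \rangle = 0$ (the same computation as for \eqref{eq:blackwell-forcing}, using that $\bm{x}_t$ is generated from $\bm{p}_{t-1}$), which lets me replace $\bm{p}_t$ by $\bm{p}_t - \bm{p}_{t-1}$ inside the inner product. For \cba{}, where $\bm{u}^x_t = \bm{u}^x_{t-1} + \omega_t \bm{v}_t$ without projection, one has $\omega_t \bm{v}_t = \bm{u}^x_t - \bm{u}^x_{t-1}$, so $\omega_t \langle \bm{v}_t, \bm{p}_t \rangle = \langle \bm{u}^x_t - \bm{u}^x_{t-1}, \bm{p}_t - \bm{p}_{t-1} \rangle$; splitting each $\bm{u}^x_s = \bm{p}_s + \pi_{\mcC^\circ}(\bm{u}^x_s)$ via the Moreau decomposition (Statement \ref{lem:statement:u-minus-proj-in-C-polar} of Lemma \ref{lem:conic-opt}), the resulting cross terms are either zero (by the orthogonality $\langle \pi_{\mcC}(\cdot), \pi_{\mcC^\circ}(\cdot) \rangle = 0$ at a common index) or nonnegative (since $\langle \bm{z}, \bm{w} \rangle \leq 0$ for $\bm{z} \in \mcC$, $\bm{w} \in \mcC^\circ$), leaving $\| \bm{p}_t - \bm{p}_{t-1} \|_2^2$ plus a nonnegative remainder. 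For \cbap{}, where $\bm{u}^x_t = \pi_{\mcC}(\bm{u}^x_{t-1} + \omega_t \bm{v}_t)$ and $\bm{p}_t = \bm{u}^x_t$, I would instead show $\omega_t \langle \bm{v}_t, \bm{u}^x_t \rangle = \langle \bm{u}^x_t - \bm{u}^x_{t-1}, \bm{u}^x_t \rangle$ (the extra polar term vanishes by Moreau orthogonality against $\bm{u}^x_t \in \mcC$), and then combine the forcing identity with the sign condition $\langle \bm{w}, \bm{z} \rangle \leq 0$ for $\bm{w} \in \mcC^\circ$, $\bm{z} \in \mcC$ to obtain $\langle \bm{u}^x_t - \bm{u}^x_{t-1}, \bm{u}^x_{t-1} \rangle \geq 0$, again yielding the bound $\geq \| \bm{u}^x_t - \bm{u}^x_{t-1} \|_2^2$.

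Finally, since $\bm{p}_t \in \mcC = \textrm{cone}(\{\kappa\} \times \mcX)$ is a nonzero conic combination, it equals $\alpha(\kappa, \bm{x})$ for some $\alpha > 0$, $\bm{x} \in \mcX$, so its first coordinate obeys $0 < \tilde{p}_t \leq \| \bm{p}_t \|_\infty$ and hence $\kappa / \tilde{p}_t \geq \kappa / \| \bm{p}_t \|_\infty$. Combining this with the identity and the quadratic lower bound, and keeping track of the overall negative sign, gives $F(\bm{x}_{t+1}, \bm{y}_t) - F(\bm{x}_t, \bm{y}_t) \leq -\kappa / (\omega_t \| \bm{p}_t \|_\infty) \cdot \| \bm{p}_t - \bm{p}_{t-1} \|_2^2$, which is exactly the stated bound after substituting $\bm{p}_t = \bm{u}^x_t$ for part 1 and $\bm{p}_t = \pi_{\mcC}(\bm{u}^x_t)$ for part 2.

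The step I expect to be the main obstacle is the quadratic lower bound: the bookkeeping of the Moreau cross terms is delicate, and the two algorithms require genuinely different arguments, since \cba{} accumulates the raw payoffs and projects only inside \chp{} (so $\bm{u}^x_t - \bm{u}^x_{t-1}$ equals $\omega_t \bm{v}_t$ exactly), whereas \cbap{} projects at every update and one must instead control the projected increment. Carefully tracking which vectors lie in $\mcC$ versus $\mcC^\circ$, and verifying that every discarded cross term carries the correct sign, is where the care is needed; the remaining manipulations are routine algebra.
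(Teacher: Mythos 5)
Your argument follows essentially the same route as the paper's proof: linearity reduces the objective difference to $\langle \bm{f}_{t},\bm{x}_{t+1}-\bm{x}_{t}\rangle$, the Blackwell forcing identity $\langle \bm{p}_{t-1},\bm{v}_{t}\rangle=0$ together with the projection structure yields the quadratic lower bound $\omega_{t}\langle \bm{v}_{t},\bm{p}_{t}\rangle \geq \|\bm{p}_{t}-\bm{p}_{t-1}\|_{2}^{2}$, and the observation $0<\tilde{p}_{t}\leq \|\bm{p}_{t}\|_{\infty}$ (the paper in fact proves equality, $\|\bm{u}_{t}^{x}\|_{\infty}=\alpha_{t+1}\kappa$) turns the factor $\kappa/\tilde{p}_{t}$ into the stated $\ell_{\infty}$ form. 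Your mechanics differ only mildly: where the paper invokes the variational inequality of the projection, $\langle \bm{u}_{t}^{x}-\bm{u}_{t-1}^{x}-\omega_{t}\bm{v}_{t},\bm{u}_{t}^{x}-\bm{z}\rangle\leq 0$ applied with $\bm{z}=\bm{u}_{t-1}^{x}$, you use Moreau's decomposition plus sign conditions between $\mcC$ and $\mcC^{\circ}$; these are interchangeable. A genuine merit of your plan is that it spells out part 2 (\cba) --- the Moreau splitting of both aggregates and the four cross terms, which you handle correctly --- whereas the paper dismisses that case as ``identical'' and omits it.

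The gap is the degenerate case $\bm{p}_{t}=\bm{0}$. Your key identity divides by $\tilde{p}_{t}$, and the theorem's right-hand side divides by $\|\bm{u}_{t}^{x}\|_{\infty}$ (resp. $\|\pi_{\mcC}(\bm{u}_{t}^{x})\|_{\infty}$), so your argument only covers $\bm{p}_{t}\neq\bm{0}$; you assume this implicitly when you write $\bm{p}_{t}=\alpha(\kappa,\bm{x})$ with $\alpha>0$. The paper devotes a separate lemma (Lemma \ref{lem:non-degenerate-update}) to exactly this point: non-degeneracy persists, i.e., $\bm{u}_{t-1}^{x}\neq\bm{0}\Rightarrow\bm{u}_{t}^{x}\neq\bm{0}$ for \cbap{} (and similarly for $\pi_{\mcC}(\bm{u}_{t}^{x})$ under \cba). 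Its contrapositive is what rescues the statement when $\bm{p}_{t}=\bm{0}$: then necessarily $\bm{p}_{t-1}=\bm{0}$ as well, so $\bm{x}_{t+1}=\bm{x}_{t}=\bm{x}_{0}$ and both sides of the claimed inequality vanish under the convention $0/0=0$. Without this persistence argument you cannot exclude the scenario $\bm{p}_{t-1}\neq\bm{0}$, $\bm{p}_{t}=\bm{0}$, in which case $\|\bm{p}_{t}-\bm{p}_{t-1}\|_{2}^{2}>0$ sits over a zero denominator and the claimed bound is meaningless. The missing lemma is short --- it uses the forcing identity you already have: $\langle \bm{u}_{t-1}^{x},\bm{u}_{t-1}^{x}+\omega_{t}\bm{v}_{t}\rangle=\|\bm{u}_{t-1}^{x}\|_{2}^{2}>0$, so the pre-projection point is not in $\mcC^{\circ}$ and its projection onto $\mcC$ is nonzero --- but it is a required ingredient of the proof, not routine bookkeeping.
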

Note that our results in Theorem \ref{th:alternation-works-bilinear-case} for \cba{} and \cbap{} improve upon the analogous results for \rmm{} and \rmp{}~\citep{burch2019revisiting}, because Theorem \ref{th:alternation-works-bilinear-case} guarantees a strict improvement from alternation, where \citep{burch2019revisiting} only show that it does not hurt.
Secondly, their result is for the case of a bilinear objective function, whereas we only require linearity in one of the variables. 
Our assumption that the objective function  is linear in one of the decision variable is satisfied for many important decision problems, e.g., markov decision processes, distributionally robust logistic regression, and matrix games, as we will see in our simulations in Section \ref{sec:simu}.
\section{Efficient implementations of CBA}\label{sec:efficient-implementation}
We now turn to efficiently implementing \cba{} and \cbap.
The main bottleneck of both \cbap{} and \cba{} is to efficiently compute $\pi_{\mcC}(\bm{u})$, the orthogonal projection of a vector $\bm{u}$ on the cone  $\mcC=\textrm{cone}(\{\kappa\} \times \mcX)$:
\begin{equation}\label{eq:projection-onto-C}
\pi_{\mcC}(\bm{u}) \in \arg \min_{\bm{y} \in\mcC} \| \bm{y} - \bm{u} \|_{2}^{2}.
\end{equation}
Note that this issue is not discussed in \cite{abernethy2011blackwell}, which do not provide an efficient implementation of \cba.
In this section, we show how to efficiently solve \eqref{eq:projection-onto-C} for many important decision sets $\mcX$. One of the critical components of our proofs is \textit{Moreau's Decomposition Theorem}~\citep{combettes2013moreau} (Statement \ref{lem:statement:u-minus-proj-in-C-polar} in Lemma \ref{lem:conic-opt}), which states that $\pi_{\mcC}(\bm{u})$ can be recovered from $\pi_{\mcC^{\circ}}(\bm{u})$ and vice versa, because for any convex cone $\mcC$, we have $\pi_{\mcC}(\bm{u})+\pi_{\mcC^{\circ}}(\bm{u})=\bm{u}.$ All the proofs for this section are presented in Appendices \ref{app:efficient-implementation}.

\subsection{Simplex}\label{sec:projection-simplex}
Assume that $\mcX = \Delta(n)$. This setting is standard for matrix games. 
It is also used for extensive-form games, because CFR decomposes regret minimization over the tree-like decision space into a set of local regret minimizations over simplexes~\citep{zinkevich2007regret}. 
In the game setting, $n$ is the number of actions of a player and $\bm{x} \in \Delta(n)$ represents a randomized strategy. 
When $\mcX = \Delta(n)$, we show that $\pi_{\mcC^{\circ}}(\bm{u})$ can be computed in $O(n\log(n))$ using a sorting trick similar to that for the standard simplex projection, and therefore $\pi_{\mcC}(\bm{u})$ can be computed in $O\left(n\log(n)\right)$ using Moreau's decomposition.
In particular, we provide the following closed-form expression for the polar cone $\mcC^{\circ}$.
%; we write $[n]$ for the set $\{1,...,n\}$.
\begin{lemma}\label{lem:simplex-C-polar} 
Let $\mcC = \textrm{cone}\left(\{1\} \times \Delta(n)\right)$. Then $ \mcC^{\circ} = \{ \left(\tilde{y},\hat{\bm{y}} \right) \in \mbR^{n+1} \; \vert \; \max_{i \in [n]} \hat{y}_{i} \leq - \tilde{y} \}.$
\end{lemma}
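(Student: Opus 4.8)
The plan is to compute the polar cone directly from the definition, using the fact that the polarity condition need only be checked against a generating set of the cone. First I would unpack the definition: $\mcC = \textrm{cone}(\{1\} \times \Delta(n))$ is precisely the set of nonnegative scalings $\lambda(1,\bm{x})$ with $\lambda \geq 0$ and $\bm{x} \in \Delta(n)$. Since scaling a generator by $\lambda \geq 0$ does not change the sign of an inner product, a vector $\bm{y} = (\tilde{y},\hat{\bm{y}})$ satisfies $\langle \bm{y},\bm{z}\rangle \leq 0$ for all $\bm{z} \in \mcC$ if and only if $\langle \bm{y},(1,\bm{x})\rangle \leq 0$ for every generator $(1,\bm{x})$, i.e. for every $\bm{x} \in \Delta(n)$. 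This is the standard reduction $(\textrm{cone}(S))^{\circ} = \{\bm{y} : \langle \bm{y},\bm{s}\rangle \leq 0,\ \forall \bm{s} \in S\}$.

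Next I would reduce this infinite family of constraints to finitely many by exploiting linearity in $\bm{x}$. The map $\bm{x} \mapsto \langle \bm{y},(1,\bm{x})\rangle = \tilde{y} + \langle \hat{\bm{y}},\bm{x}\rangle$ is affine, so its maximum over the polytope $\Delta(n)$ is attained at a vertex $\bm{e}_{i}$. Hence $\langle \bm{y},(1,\bm{x})\rangle \leq 0$ for all $\bm{x} \in \Delta(n)$ is equivalent to $\tilde{y} + \hat{y}_{i} \leq 0$ for every $i \in [n]$, which is exactly the condition $\max_{i \in [n]} \hat{y}_{i} \leq -\tilde{y}$. Both inclusions then follow at once.

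Alternatively, I could first derive the explicit description $\mcC = \{(\tilde{z},\hat{\bm{z}}) : \hat{\bm{z}} \geq 0,\ \tilde{z} = \sum_{i} \hat{z}_{i}\}$ by solving $\tilde{z} = \lambda,\ \hat{\bm{z}} = \lambda \bm{x}$ for $\lambda \geq 0$ and $\bm{x} \in \Delta(n)$, and then write $\langle \bm{y},\bm{z}\rangle = \sum_{i} (\tilde{y} + \hat{y}_{i})\hat{z}_{i}$; requiring this to be nonpositive for all $\hat{\bm{z}} \geq 0$ again forces each coefficient $\tilde{y} + \hat{y}_{i} \leq 0$, since any single $\hat{z}_{i}$ may be taken arbitrarily large. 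I do not anticipate a genuine obstacle: the only point needing care is the justification that polarity against the whole cone reduces to polarity against its generators, and then to the vertices of the simplex, which is immediate from the nonnegativity of the scaling factors and the affineness of the objective. I would also note for consistency with the conic setup of the paper that, for $\mcX = \Delta(n)$, one has $\kappa = \max_{\bm{x} \in \Delta(n)} \|\bm{x}\|_{2} = 1$, which is why the generators take the form $(1,\bm{x})$.
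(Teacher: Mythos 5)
Your proposal is correct and follows essentially the same route as the paper's proof: both reduce the polarity condition to the generators $(1,\bm{x})$ with $\bm{x} \in \Delta(n)$ (noting $\kappa = 1$), and then use that $\max_{\bm{x} \in \Delta(n)} \langle \hat{\bm{y}},\bm{x}\rangle = \max_{i \in [n]} \hat{y}_{i}$ since the maximum of a linear function over the simplex is attained at a vertex. The only cosmetic difference is that the paper writes the chain of equivalences with the scalar $\alpha \geq 0$ kept explicit, whereas you justify dropping it up front via the generating-set reduction.
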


Therefore, computing $\pi_{\mcC^{\circ}}(\bm{u})$ is equivalent to solving
\begin{equation}\label{eq:projection-onto-C-polar-simplex}
\min \{ ( \tilde{y}-\tilde{u} )^{2} +  \| \hat{\bm{y}}-\hat{\bm{u}} \|_{2}^{2}  \; \vert \; (\tilde{y},\hat{\bm{y}}) \in \mbR^{n+1}, \max_{i \in [n]} \hat{y}_{i} \leq - \tilde{y} \}. 
\end{equation}
We prove  the following proposition in Appendix \ref{app:efficient-implementation}.
\begin{proposition}\label{prop:proj-simplex}
Let $\mcX = \Delta(n)$.
An optimal solution $\pi_{\mcC^{\circ}}(\bm{u})$ to \eqref{eq:projection-onto-C-polar-simplex} can be computed in $O(n \log(n))$ arithmetic operations. Therefore, $\pi_{\mcC}(\bm{u})$ can be computed in $O\left(n\log(n)\right)$ arithmetic operations.
\end{proposition}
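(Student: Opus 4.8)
The plan is to reduce the $(n+1)$-dimensional projection \eqref{eq:projection-onto-C-polar-simplex} to a single scalar optimization, which then admits a closed-form solution after a sort. Write $\bm{u} = (\tilde u, \hat{\bm u})$ and recall from Lemma \ref{lem:simplex-C-polar} that the feasible set is $\{(\tilde y, \hat{\bm y}) : \hat y_i \leq -\tilde y, \ \forall i \in [n]\}$. The key observation is that, once we fix the scalar coordinate $\tilde y = s$, the objective is separable across the coordinates $\hat y_i$, and each inner problem $\min\{(\hat y_i - \hat u_i)^2 : \hat y_i \leq -s\}$ is a one-dimensional projection onto a half-line. Its solution is $\hat y_i = \min(\hat u_i, -s)$, with optimal value $([\hat u_i + s]^+)^2$. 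Substituting back, the projection reduces to minimizing over $s \in \mbR$ the scalar function
\[ \phi(s) = (s - \tilde u)^2 + \sum_{i=1}^n ([\hat u_i + s]^+)^2. \]

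Next I would exploit the structure of $\phi$. As a sum of convex functions, $\phi$ is convex, and it is piecewise quadratic with breakpoints at $s = -\hat u_i$; hence it is minimized at the unique $s^\star$ solving $\phi'(s)/2 = (s - \tilde u) + \sum_{i=1}^n [\hat u_i + s]^+ = 0$. For a fixed \emph{active set} $A = \{i : \hat u_i + s > 0\}$ this stationarity equation is linear and yields $s = (\tilde u - \sum_{i \in A} \hat u_i)/(1 + |A|)$. The remaining task is to identify the correct active set. Sorting $\hat u_{(1)} \geq \dots \geq \hat u_{(n)}$ makes every candidate active set a prefix $\{(1), \dots, (k)\}$, and, as in the classical simplex-projection algorithm, one scans $k = 0, 1, \dots, n$, computes the associated candidate $s_k$, and selects the $k$ for which the consistency conditions $\hat u_{(k)} + s_k \geq 0$ and $\hat u_{(k+1)} + s_k \leq 0$ both hold. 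Monotonicity in the sorted order guarantees existence and uniqueness of such a $k$; once $s^\star$ is found, $\hat y_i = \min(\hat u_i, -s^\star)$ recovers $\pi_{\mcC^\circ}(\bm u)$.

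For the complexity, the sort costs $O(n\log n)$ arithmetic operations, and the subsequent scan---maintaining the running sum $\sum_{i \in A}\hat u_i$ incrementally---costs $O(n)$, so $\pi_{\mcC^\circ}(\bm u)$ is obtained in $O(n\log n)$. Finally, Moreau's decomposition (Statement \ref{lem:statement:u-minus-proj-in-C-polar} of Lemma \ref{lem:conic-opt}) gives $\pi_{\mcC}(\bm u) = \bm u - \pi_{\mcC^\circ}(\bm u)$ in an additional $O(n)$ operations, yielding the claimed bound for $\pi_{\mcC}(\bm u)$ as well. I expect the main obstacle to be the correctness of the active-set selection rather than the complexity count: one must verify that the threshold conditions single out a unique consistent prefix and that the corresponding $s^\star$ is indeed the global minimizer of $\phi$ (equivalently, satisfies the KKT conditions of \eqref{eq:projection-onto-C-polar-simplex}). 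This is where I would spend the care, mirroring the standard proof for Euclidean projection onto the simplex.
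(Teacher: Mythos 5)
Your proposal is correct and takes essentially the same route as the paper: both fix the scalar coordinate, solve the separable inner projection to get $\hat{y}_{i} = \min(\hat{u}_{i}, -\tilde{y})$, reduce to the one-dimensional piecewise-quadratic function $\phi$, and solve the resulting equation $s + \sum_{i=1}^{n}[\hat{u}_{i} + s]^{+} = \tilde{u}$ after an $O(n\log n)$ sort followed by Moreau's decomposition. The only difference is cosmetic: you obtain that equation as the stationarity condition $\phi'(s) = 0$ of the convex function $\phi$, whereas the paper derives the identical equation geometrically from $\bm{u} - \pi_{\mcC^{\circ}}(\bm{u}) = \pi_{\mcC}(\bm{u}) \in \mcC$ (so its normalization lies in $\Delta(n)$ and its entries sum to one), treating the degenerate case $\bm{u} \in \mcC^{\circ}$ by a separate membership check---a case your prefix scan covers automatically at $k = 0$.
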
 

 \subsection{$\ell_{p}$ balls}\label{sec:projection-ell-p-balls}
For $p  \geq 1$ and $p=\infty$, we consider the $\ell_p$ balls $\mcX = \{ \bm{x} \in \mbR^{n} \; \vert \; \| \bm{x} \|_{p} \leq 1\}$. This type of decision set appears in many problems in optimization, including robust optimization~\citep{ben2015oracle}, distributionally robust logistic regression~\citep{namkoong2016stochastic}, $\ell_{\infty}$ regression \citep{sidford2018coordinate} and saddle-point reformulation of Markov Decision Processes~\citep{jin2020efficiently}. We first reformulate the cones $\mcC$ and $\mcC^{\circ}$. Recall that $\kappa = \max \{ \| \bm{x} \|_{2} \; \vert \; \bm{x} \in \mcX\}$. 
%We present the proof in Appendix \ref{app:efficient-implementation}.
\begin{lemma}\label{lem:ball-p-C-polar}
Let $\mcX = \{ \bm{x} \in \mbR^{n} \; \vert \; \| \bm{x} \|_{p} \leq 1\},$ with $p  \geq 1$ or $p=\infty$. Let $q \in \mbR \bigcup \{+ \infty\}$ be such that $1/p + 1/q =1$. Then 
\begin{align*}
\mcC & = \{ (\tilde{y},\bm{y}) \in \mbR \times \mbR^{n} \; \vert \; \| \bm{y} \|_{p} \leq  \tilde{y}/\kappa \}, \\
\mcC^{\circ} & = \{ (\tilde{y},\bm{y}) \in \mbR \times \mbR^{n} \; \vert \; \| \bm{y} \|_{q} \leq - \kappa \tilde{y} \}.
\end{align*}
\end{lemma}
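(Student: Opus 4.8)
The plan is to obtain both descriptions directly from the definition $\mcC = \textrm{cone}(\{\kappa\}\times\mcX)$, first pinning down $\mcC$ itself and then polarizing against its generators while invoking $\ell_p$--$\ell_q$ norm duality. Since $\mcX$ is convex, nonempty and compact, I first note that its conic hull collapses to the one-parameter family $\mcC = \{\lambda(\kappa,\bm{x}) : \lambda \geq 0,\ \|\bm{x}\|_p \leq 1\}$: any conic combination of points $(\kappa,\bm{x})$ reduces to a single nonnegative multiple of one such point by convexity of $\mcX$. Writing a generic point as $(\tilde{y},\bm{y})$ and substituting $\tilde{y}=\lambda\kappa$, $\bm{y}=\lambda\bm{x}$, the case $\lambda>0$ gives $\bm{x}=\kappa\bm{y}/\tilde{y}$, so the constraint $\|\bm{x}\|_p\leq 1$ becomes exactly $\|\bm{y}\|_p\leq \tilde{y}/\kappa$, while $\lambda=0$ yields the origin, which also satisfies this inequality. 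Conversely, any $(\tilde{y},\bm{y})$ with $\|\bm{y}\|_p\leq\tilde{y}/\kappa$ forces $\tilde{y}\geq 0$ (as $\kappa>0$) and is recovered with $\lambda=\tilde{y}/\kappa$. This establishes the first displayed identity, and the resulting set is visibly closed, which simultaneously confirms that $\mcC$ is a closed convex cone so that polarity is well behaved.

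Next I would compute $\mcC^{\circ}$. By definition $(\tilde{y},\bm{y})\in\mcC^{\circ}$ iff $\langle(\tilde{y},\bm{y}),\hat{\bm{z}}\rangle\leq 0$ for every $\hat{\bm{z}}\in\mcC$. Because a positive scaling preserves the sign of an inner product, it suffices to test against the generators $(\kappa,\bm{x})$, $\bm{x}\in\mcX$, so that $\mcC^{\circ}=\{(\tilde{y},\bm{y}) : \kappa\tilde{y}+\langle\bm{y},\bm{x}\rangle\leq 0,\ \forall\,\|\bm{x}\|_p\leq 1\}$. Maximizing the left-hand side over $\bm{x}$ gives $\max_{\|\bm{x}\|_p\leq 1}\langle\bm{y},\bm{x}\rangle=\|\bm{y}\|_q$ by the sharp form of H\"older's inequality, valid for every conjugate pair including the endpoint cases $(1,\infty)$ and $(\infty,1)$. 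The whole system therefore collapses to the single inequality $\kappa\tilde{y}+\|\bm{y}\|_q\leq 0$, i.e. $\|\bm{y}\|_q\leq-\kappa\tilde{y}$, which is the claimed description of $\mcC^{\circ}$.

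The computation is largely mechanical, and I expect the only genuine subtlety to be the justification in the first step that the conic hull of the compact convex set $\{\kappa\}\times\mcX$ really is the stated closed one-parameter family, so that no limit points are missed and the reduction to testing only the generators in the polar is legitimate. The secondary point to handle carefully is the dual-norm identity at the extremes $p=1$ and $p=\infty$, where I would appeal to the attainment of H\"older's inequality rather than any differentiation argument. I would also briefly check that the degenerate boundary $\tilde{y}=0$ is consistent on both sides (it forces $\bm{y}=\bm{0}$ in $\mcC$, and in $\mcC^{\circ}$ requires $\|\bm{y}\|_q\leq 0$, hence again $\bm{y}=\bm{0}$). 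Everything else follows from convexity of $\mcX$ together with norm duality.
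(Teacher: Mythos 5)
Your proof is correct and follows essentially the same route as the paper's: both parametrize $\mcC$ as $\{\alpha(\kappa,\bm{x}) : \alpha \geq 0,\ \|\bm{x}\|_p \leq 1\}$, obtain the first identity by direct substitution, and compute $\mcC^{\circ}$ by testing the polar inequality against the generators and invoking the dual-norm identity $\max_{\|\bm{x}\|_p\leq 1}\langle\hat{\bm{y}},\bm{x}\rangle=\|\hat{\bm{y}}\|_q$. Your added care about the conic-hull collapse, the $\lambda=0$ boundary case, and the converse inclusion only makes explicit what the paper leaves implicit.
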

Based on Lemma \ref{lem:ball-p-C-polar}, we can prove the following propositions.
\begin{proposition}\label{prop:proj-ball-1-C-polar}
Let $\mcX = \{ \bm{x} \in \mbR^{n} \; \vert \; \| \bm{x} \|_{p} \leq 1\}$ for $p\in \{1,\infty\}$. Then $\pi_{\mcC}(\bm{u})$ can be computed in $O\left(n\log(n)\right)$ operations.
\end{proposition}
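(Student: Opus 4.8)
The plan is to reduce, in both cases $p \in \{1, \infty\}$, the projection onto $\mcC$ to a projection onto an $\ell_\infty$-type cone, and then to show that the latter can be carried out by a one-dimensional convex minimization solved via sorting. By Lemma \ref{lem:ball-p-C-polar}, when $p = \infty$ (so $q = 1$) the cone $\mcC = \{(\tilde{y}, \bm{y}) : \|\bm{y}\|_\infty \le \tilde{y}/\kappa\}$ is already of $\ell_\infty$ type, so I would project directly onto $\mcC$. When $p = 1$ (so $q = \infty$) it is instead the polar $\mcC^\circ = \{(\tilde{y}, \bm{y}) : \|\bm{y}\|_\infty \le -\kappa\tilde{y}\}$ that is of $\ell_\infty$ type; here I would compute $\pi_{\mcC^\circ}(\bm{u})$ and then recover $\pi_{\mcC}(\bm{u}) = \bm{u} - \pi_{\mcC^\circ}(\bm{u})$ via Moreau's decomposition (Statement \ref{lem:statement:u-minus-proj-in-C-polar} of Lemma \ref{lem:conic-opt}). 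In both cases the essential subroutine is projecting a vector $\bm{u} = (\tilde{u}, \hat{\bm{u}})$ onto a cone of the form $K = \{(s, \bm{z}) : \|\bm{z}\|_\infty \le c\,s\}$ for a fixed constant $c > 0$.

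The key step is to solve this $\ell_\infty$-cone projection by eliminating the vector variable. Feasibility forces $s \ge 0$ and $|z_i| \le cs$, and for fixed $s$ the optimal $\bm{z}$ is the coordinatewise clipping $z_i^*(s) = \max(-cs, \min(cs, \hat{u}_i))$, whose squared residual equals $(\max(0, |\hat{u}_i| - cs))^2$. Substituting back, the projection reduces to the one-dimensional minimization over $s \ge 0$ of
\[ h(s) = (s - \tilde{u})^2 + \sum_{i=1}^n \left( \max(0, |\hat{u}_i| - cs) \right)^2. \]
I would check that $h$ is convex and piecewise quadratic, with breakpoints only at the values $s = |\hat{u}_i|/c$. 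After sorting these $n$ breakpoints in $O(n\log n)$ time, the restriction of $h$ to each interval between consecutive breakpoints is a single quadratic whose coefficients can be maintained incrementally from running sums; a single left-to-right scan then locates the interval on which $h'$ changes sign (or returns $s^* = 0$ if $h$ is nondecreasing on $[0,\infty)$), producing the minimizer $s^*$ in $O(n)$ additional work. The optimal $\bm{z}^*$ follows by clipping with threshold $cs^*$.

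The remaining bookkeeping is to translate back to the original variables. For $p = \infty$ I would take $s = \tilde{y}$ and $c = 1/\kappa$ (with $\kappa = \sqrt{n}$), obtaining $\pi_{\mcC}(\bm{u})$ directly; for $p = 1$ I would take $s = -\tilde{y}$ and $c = 1$ (with $\kappa = 1$), obtaining $\pi_{\mcC^\circ}(\bm{u})$ and then applying Moreau's decomposition. Since the sort dominates, the total cost is $O(n\log n)$ in both cases.

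I expect the main technical point, rather than a genuine obstacle, to be verifying that $h$ is convex and $C^1$ across its breakpoints, so that the sign change of $h'$ cleanly identifies the unique minimizer; the reduction to clipping and the sorting argument are then routine. A minor case to handle carefully is when the unconstrained minimizer of $h$ is negative, in which case convexity forces the constrained optimum to be $s^* = 0$, i.e.\ $\pi_K(\bm{u}) = \bm{0}$, corresponding to $\bm{u}$ lying in the polar of $K$.
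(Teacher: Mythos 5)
Your proposal is correct and follows essentially the same route as the paper: reduce (via Lemma \ref{lem:ball-p-C-polar}, and for $p=1$ via Moreau's decomposition from Lemma \ref{lem:conic-opt}) to projecting onto an $\ell_\infty$-type cone, eliminate the vector variable by coordinatewise clipping for fixed scalar, and minimize the resulting one-dimensional convex piecewise-quadratic function by sorting its $n$ breakpoints and scanning, giving $O(n\log n)$ total. If anything, your write-up is slightly more careful on one detail: by working with $\vert \hat{u}_i \vert$ you account for the two-sided clipping residuals, whereas the paper's displayed identity $\hat{\bm{y}}^{*}(\tilde{y}) - \hat{\bm{u}} = ( \hat{\bm{u}} + \tilde{y}\bm{e} )^{+}$ records only the upper side of the clip.
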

% \begin{proposition}\label{prop:proj-ball-infty-C-polar}
% Let $\mcX = \{ \bm{x} \in \mbR^{n} \; \vert \; \| \bm{x} \|_{\infty} \leq 1\}.$ Then $\pi_{\mcC}(\bm{u})$ can be computed in $O\left(n\log(n)\right)$ operations.
% \end{proposition}
\begin{proposition}\label{prop:proj-ball-2-C-polar}
Let $\mcX = \{ \bm{x} \in \mbR^{n} \; \vert \; \| \bm{x} \|_{2} \leq 1\}.$ Then $\pi_{\mcC}(\bm{u})$ can be computed in $O\left(n\right)$ operations.
\end{proposition}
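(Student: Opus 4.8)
The plan is to apply Lemma~\ref{lem:ball-p-C-polar} to recognize $\mcC$ as the standard second-order (Lorentz) cone, for which the orthogonal projection has a well-known closed form that can be evaluated in $O(n)$ time.

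First I would pin down the constants. Since $\mcX = \{\bm{x} \in \mbR^n : \|\bm{x}\|_2 \leq 1\}$ is the Euclidean unit ball, we have $\kappa = \max\{\|\bm{x}\|_2 : \|\bm{x}\|_2 \leq 1\} = 1$. Applying Lemma~\ref{lem:ball-p-C-polar} with $p = q = 2$ and $\kappa = 1$ then gives
\[
\mcC = \{(\tilde{y}, \bm{y}) \in \mbR \times \mbR^n : \|\bm{y}\|_2 \leq \tilde{y}\}, \qquad \mcC^{\circ} = \{(\tilde{y}, \bm{y}) : \|\bm{y}\|_2 \leq -\tilde{y}\} = -\mcC,
\]
so that $\mcC$ is exactly the Lorentz cone in $\mbR^{n+1}$.

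Next I would derive the closed-form projection. Writing $\bm{u} = (\tilde{u}, \hat{\bm{u}})$, I distinguish three cases. If $\|\hat{\bm{u}}\|_2 \leq \tilde{u}$, then $\bm{u} \in \mcC$ and $\pi_{\mcC}(\bm{u}) = \bm{u}$. If $\|\hat{\bm{u}}\|_2 \leq -\tilde{u}$, then $\bm{u} \in \mcC^{\circ}$, so Moreau's decomposition (Statement~\ref{lem:statement:u-minus-proj-in-C-polar} of Lemma~\ref{lem:conic-opt}) gives $\pi_{\mcC}(\bm{u}) = \bm{0}$. Otherwise, by rotational symmetry of $\mcC$ about the $\tilde{y}$-axis the minimizer must have its $\bm{y}$-component aligned with $\hat{\bm{u}}$ and lie on the boundary $\|\bm{y}\|_2 = \tilde{y}$; parametrizing the boundary as $r\left(1, \hat{\bm{u}}/\|\hat{\bm{u}}\|_2\right)$ with $r \geq 0$ and minimizing $\frac{1}{2}(r-\tilde{u})^2 + \frac{1}{2}(r - \|\hat{\bm{u}}\|_2)^2$ yields $r = (\tilde{u} + \|\hat{\bm{u}}\|_2)/2$, whence
\[
\pi_{\mcC}(\bm{u}) = \frac{\tilde{u} + \|\hat{\bm{u}}\|_2}{2}\left(1, \frac{\hat{\bm{u}}}{\|\hat{\bm{u}}\|_2}\right).
\]
A short check confirms that in this third case $\|\hat{\bm{u}}\|_2 > |\tilde{u}| \geq 0$, so $\hat{\bm{u}} \neq \bm{0}$ and $r > 0$, making the formula well-defined.

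Finally I would observe that every operation in this formula—forming the norm $\|\hat{\bm{u}}\|_2$, performing the case tests, and rescaling $\hat{\bm{u}}$—costs only $O(n)$ arithmetic operations, giving the claimed complexity. The only mildly delicate point is the third-case reduction: one must argue that shrinking $\|\bm{y}\|_2$ below $\tilde{y}$, or misaligning $\bm{y}$ from $\hat{\bm{u}}$, strictly increases the squared distance, so that the one-dimensional optimization over $r$ is exact. This is the classical second-order cone projection identity, and the substantive content of the proof is simply recognizing that the $\ell_2$-ball instantiation of $\mcC$ is precisely the Lorentz cone.
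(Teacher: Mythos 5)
Your proposal is correct and follows essentially the same route as the paper: both instantiate Lemma~\ref{lem:ball-p-C-polar} with $\kappa=1$ to identify $\mcC$ as the Lorentz cone, perform the same three-way case split ($\bm{u}\in\mcC$, $\bm{u}\in\mcC^{\circ}$, else), and arrive at the identical closed-form projection $\pi_{\mcC}(\bm{u}) = \frac{\tilde{u}+\|\hat{\bm{u}}\|_{2}}{2}\left(1, \hat{\bm{u}}/\|\hat{\bm{u}}\|_{2}\right)$ in $O(n)$ operations. The only (cosmetic) difference is in the third case, where you invoke the boundary-and-rotational-symmetry reduction to a one-dimensional quadratic in $r$, while the paper minimizes the equivalent one-dimensional function $\phi(\tilde{y})$ over the two intervals $[0,\|\hat{\bm{u}}\|_{2}]$ and $[\|\hat{\bm{u}}\|_{2},+\infty)$ and compares the two candidate minima.
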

 \subsection{Ellipsoidal confidence region in the simplex}\label{sec:projection-confidence-regions}
Here, $\mcX$ is an \textit{ellipsoidal subregion of the simplex}, defined as 
$\mcX = \{ \bm{x} \in \Delta(n) \; \vert \; \| \bm{x} - \bm{x}_{0} \|_{2} \leq \epsilon_{x} \}$. 
This type of decision set is widely used because it is associated with confidence regions when estimating a probability distribution from observed data~\citep{Iyengar,bertsimas2019probabilistic}. It can also be used in the Bellman update for robust Markov Decision Processes~\citep{Iyengar,Kuhn,GGC}. 
We also assume that the confidence region is ``entirely contained in the simplex'': $\{ \bm{x} \in \mbR^{n} \vert \bm{x}^{\top}\bm{e}=1 \} \bigcap \{ \bm{x} \in \mbR^{n} \; \vert \; \| \bm{x}- \bm{x}_{0} \|_{2} \leq \epsilon_{x} \} \subseteq \Delta(n)$, to avoid degenerate components. In this case, using a change of basis we show that it is possible to compute $\pi_{\mcC}(\bm{u})$ in closed-form, i.e., in $O(n)$ arithmetic operations.
\begin{proposition}\label{prop:proj-confidence-regions}
Let $\mcX = \{ \bm{x} \in \Delta(n) \; \vert \; \| \bm{x} - \bm{x}_{0} \|_{2} \leq \epsilon_{x} \}$ and assume that $\{ \bm{x} \in \mbR^{n} \vert \bm{x}^{\top}\bm{e}=1 \} \bigcap \{ \bm{x} \in \mbR^{n} \; \vert \; \| \bm{x}- \bm{x}_{0} \|_{2} \leq \epsilon_{x} \} \subseteq \Delta(n)$. Then $\pi_{\mcC}(\bm{u})$ can be computed in $O\left(n\right)$ arithmetic operations.
\end{proposition}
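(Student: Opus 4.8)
The plan is to exploit the assumption that the confidence region is entirely contained in the simplex, which makes the nonnegativity constraints of $\Delta(n)$ inactive. Under this assumption $\mcX$ coincides with the intersection $H \cap B$ of the affine hyperplane $H = \{\bm{x} \in \mbR^{n} : \langle \bm{e}, \bm{x}\rangle = 1\}$ and the Euclidean ball $B = \{\bm{x} : \|\bm{x} - \bm{x}_{0}\|_{2} \le \epsilon_{x}\}$, i.e., $\mcX$ is a round $(n-1)$-dimensional ball lying inside $H$. After replacing $\bm{x}_{0}$ by its orthogonal projection onto $H$ and $\epsilon_{x}$ by the corresponding reduced radius (an $O(n)$ computation), I may assume the center satisfies $\bm{x}_{0} \in H$, so that $\mcX = \{\bm{x}_{0} + \bm{w} : \bm{w} \perp \bm{e},\ \|\bm{w}\|_{2} \le \epsilon_{x}\}$.

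First I would derive a closed-form description of the cone. Writing $\hat{\bm{y}}_{\perp} = \hat{\bm{y}} - \tfrac{1}{n}\langle \bm{e}, \hat{\bm{y}}\rangle \bm{e}$ for the component orthogonal to $\bm{e}$, and $\bm{d} = \bm{x}_{0} - \tfrac{1}{n}\bm{e}$ for the in-$\bm{e}^{\perp}$ offset of the center, a direct computation from $\mcC = \textrm{cone}(\{\kappa\}\times \mcX)$ and $\langle \bm{e},\bm{x}_{0}\rangle = 1$ gives
\begin{align*}
\mcC = \Big\{ (\tilde{y}, \hat{\bm{y}}) : \tilde{y} \ge 0,\ \langle \bm{e}, \hat{\bm{y}}\rangle = \tilde{y}/\kappa,\ \big\|\hat{\bm{y}}_{\perp} - \tfrac{\tilde{y}}{\kappa}\bm{d}\big\|_{2} \le \tfrac{\tilde{y}}{\kappa}\epsilon_{x} \Big\}.
\end{align*}
This is a single linear equality together with a second-order-cone constraint. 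Equivalently, applying an orthogonal change of basis of $\mbR^{n}$ whose first vector is $\bm{e}/\sqrt{n}$ turns $\mcX$ into a ball in a coordinate subspace and $\mcC$ into a cone over that ball; the induced map on $\mbR^{n+1}$ acts as the identity on the leading coordinate $\tilde{u}$ and is a bijective isometry, so it preserves the projection, and the quantities it requires are just a few inner products, computable in $O(n)$.

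The key step is to solve the projection in closed form using the rotational symmetry of $\mcC$. The cone is invariant under any rotation of $\bm{e}^{\perp}$ that fixes the direction $\bm{d}$: such rotations fix $\tilde{y}$, $\langle \bm{e}, \hat{\bm{y}}\rangle$ and $\tfrac{\tilde{y}}{\kappa}\bm{d}$, and preserve the norm in the conic constraint. Consequently $\pi_{\mcC}(\bm{u})$ must share the component of $\bm{u}$ in the subspace of $\bm{e}^{\perp}$ orthogonal to $\bm{d}$ up to a nonnegative scalar; equivalently, the optimal offset $\bm{w}$ lies in $\textrm{span}(\bm{d}, \hat{\bm{u}}_{\perp})$. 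This collapses the projection to a minimization in the constant number of scalar unknowns $\tilde{y} \ge 0$, the coordinate $w_{1}$ of $\bm{w}$ along $\bm{d}/\|\bm{d}\|$, and the magnitude $s \ge 0$ of $\bm{w}$ orthogonal to $\bm{d}$, subject to $w_{1}^{2} + s^{2} \le \epsilon_{x}^{2}$, whose coefficients are $\langle \bm{e}, \hat{\bm{u}}\rangle$, $\langle \bm{d}, \hat{\bm{u}}\rangle$ and $\|\hat{\bm{u}}_{\perp}\|_{2}$, all computable in $O(n)$. Solving this low-dimensional convex program in closed form via its KKT conditions is $O(1)$, and reconstructing $\pi_{\mcC}(\bm{u})$ by forming $\hat{\bm{y}} = \tfrac{\tilde{y}}{\kappa}(\bm{x}_{0} + \bm{w})$ is again $O(n)$.

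I expect the main obstacle to be the closed-form solution of the reduced low-dimensional projection: one must carry out the case analysis determined by whether the constraint $w_{1}^{2} + s^{2} \le \epsilon_{x}^{2}$ is active and whether $\tilde{y}$ hits the boundary $\tilde{y} = 0$, and verify that the resulting candidate is the global minimizer. The degenerate cases $\bm{d} = \bm{0}$ (uniform center, giving full rotational symmetry in $\bm{e}^{\perp}$) and $\hat{\bm{u}}_{\perp} \parallel \bm{d}$ (so $s = 0$) should be checked separately but are easier. Everything else — the reduction to $H \cap B$, the change of basis realized through inner products, and the final reconstruction — is routine and manifestly $O(n)$.
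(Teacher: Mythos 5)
Your route is genuinely different from the paper's, and it is worth being precise about where it diverges: you attempt to compute $\pi_{\mcC}(\bm{u})$ for the original cone $\mcC=\textrm{cone}(\{\kappa\}\times\mcX)$ directly, whereas the paper never projects onto this cone at all. The paper instead exploits invariance of regret under translation and scaling of the decision set: writing $\mcX=\bm{x}_{0}+\epsilon_{x}\tilde{B}$ with $\tilde{B}$ the \emph{centered} unit ball of the hyperplane $\{\bm{z}:\langle\bm{e},\bm{z}\rangle=0\}$, and expressing $\tilde{B}=\{\bm{V}\bm{s}:\|\bm{s}\|_{2}\le 1\}$ in an explicit orthonormal basis $\bm{V}$, it runs \cbap{} on the unit $\ell_{2}$-ball in $\mbR^{n-1}$ with transformed payoffs $\bm{V}^{\top}\bm{f}_{t}$, where the relevant cone is a \emph{right} circular cone and Proposition \ref{prop:proj-ball-2-C-polar} gives an elementary closed form. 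Your conic description of $\mcC$ and your symmetry reduction are correct: the projection does lie in $\textrm{span}\{(1,\bm{0}),(0,\bm{e}),(0,\bm{d}),(0,\hat{\bm{u}}_{\perp\perp})\}$, with $\hat{\bm{u}}_{\perp\perp}$ the component of $\hat{\bm{u}}$ orthogonal to both $\bm{e}$ and $\bm{d}$, and this reduction needs only a constant number of inner products rather than a structured basis, which is a genuinely attractive feature.

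The gap is in the final step, which is exactly where the entire difficulty of the proposition sits. Since $\bm{x}_{0}$ is generically not the uniform distribution, $\bm{d}=\bm{x}_{0}-\bm{e}/n\neq\bm{0}$, and your reduced problem is a Euclidean projection onto an \emph{oblique} circular cone in $\mbR^{3}$ (apex not over the center of the disk). Two concrete consequences follow. First, in your variables $(\tilde{y},w_{1},s)$ the problem is not a convex program: the parametrization $(\tilde{y},\bm{w})\mapsto(\tilde{y},(\tilde{y}/\kappa)(\bm{x}_{0}+\bm{w}))$ is bilinear, so the objective contains indefinite cross terms $\tilde{y}w_{1}$ and $\tilde{y}s$; convexity is only recovered if you optimize over the coordinates of $\hat{\bm{y}}$ itself, where membership becomes a second-order-cone constraint of the form $\|(a-(\tilde{y}/\kappa)\|\bm{d}\|_{2},\,b)\|_{2}\le(\tilde{y}/\kappa)\epsilon_{x}$. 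Second, and more importantly, your case analysis (ball constraint active or not, $\tilde{y}=0$ or not) misses the generic case: when the projection lies on the lateral surface of the cone, stationarity gives $\bm{x}^{*}=(I+\lambda Q)^{-1}\bm{u}$ with $\bm{x}^{*\top}Q\bm{x}^{*}=0$, where $Q$ is the quadric defining the cone, and in the oblique case this is a genuine degree-four polynomial equation in the multiplier $\lambda$. For a right circular cone the same equation factors as a difference of squares and collapses to a linear equation — this is precisely why the paper's recentered formulation admits a simple closed form and yours does not. A quartic is still solvable by radicals, so an $O(n)$ bound is salvageable in principle, but the ``routine closed-form KKT'' step you describe does not exist as stated; the clean repair is the idea your proposal is missing, namely reparametrizing the regret problem so that the ball is centered (legitimate because translating the decision set leaves the regret unchanged), rather than projecting onto the original oblique cone.
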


\subsection{Other decision sets via bisection}\label{sec:projection-other sets}
% CK: maybe reformulate this a bit
In the case where we can not find an exact solution of $\pi_{\mcC}(\bm{u})$ or $\pi_{\mcC^{\circ}}(\bm{u})$, it is possible to resort to bisection to obtain an approximate solutions. 
In particular, $\pi_{\mcC}(\bm{u})$ is the solution to the following optimization program:
\begin{equation}\label{eq:pi-C-optimization-program}
\min \{ ( \alpha \kappa -\tilde{u} )^{2} +  \| \alpha \bm{x}-\hat{\bm{u}} \|_{2}^{2}  \; \vert \; \alpha \geq 0, \bm{x} \in \mcX \}.
\end{equation}
If we fix $\alpha > 0$, then an optimal $\bm{x}(\alpha)$ is a solution to 
\begin{equation}\label{eq:pi-C-optimization-program-fixed-alpha}
\min \{  \| \bm{x}-\hat{\bm{u}}/\alpha \|_{2}^{2}  \; \vert \; \bm{x} \in \mcX \}.
\end{equation}
Therefore, if we can efficiently compute orthogonal projections on the set $\mcX$, it is possible to perform bisection on $\alpha$ to compute an $\epsilon$-approximation of $\pi_{\mcC}(\bm{u})$ in $O\left(\log(\epsilon^{-1})\right)$ iterations, i.e., solving \eqref{eq:pi-C-optimization-program-fixed-alpha} only $O\left(\log(\epsilon^{-1})\right)$ times.
\section{Numerical experiments}\label{sec:simu}
In this section we compare the performances of \spcbap{} on real and synthetic instances of classical saddle-point problems. 
% next commented lines are useless now that we have defined \spcbap
%For \cbap, we use the strongest setting in the repeated game framework, i.e. we use \cbap{} with alternation, along with linear averaging on the decisions and uniform averaging on the payoffs. 
We focus on bilinear matrix games, extensive-form games, distributionally robust logistic regression, and Markov decision processes (MDPs).
Recall that we have defined \spcbap{} by combining the repeated game framework from Section \ref{sec:game-setup} with \cbap{} as a regret minimizer, along with uniform weights on the payoffs, linear weights on the decisions and the alternating updates from Section \ref{sec:regret-vound-duality-gap-alternating-updates}.
We start by examining the performance of \spcbap\ on matrix and extensive-form games; for these games the \rmp\ algorithm is already known to perform extremely well empirically, and the goal of these experiments is to see whether \spcbap\ retains that very strong empirical performance. The experiments on distributionally robust logistic regression and MDPs then show the performance on new domains where no Blackwell-based algorithms were known prior to this paper.

\subsection{Matrix games}\label{sec:simu-bspp}
Matrix games are saddle-point problems with a bilinear objective function and simplexes as decision sets:
\begin{equation}\label{eq:matrix-games}
\min_{\bm{x} \in \Delta(n) } \max_{\bm{y} \in \Delta(m) } \langle \bm{x},\bm{Ay} \rangle
\end{equation}
where $\bm{A} \in \mbR^{n \times m}$ is the matrix of payoffs of the game. We can view \eqref{eq:matrix-games} as a zero-sum game between the first player and the second player, where the coefficient $A_{ij} \in \mbR$ represents payoff obtained by the second player when the first player chooses action $i$ and the second player chooses action $j$. 

\paragraph{Experimental setup} 
We generate 100 synthetic matrices $\bm{A}$ of size $\mbR^{n \times m}$ with $(n,m)=(100,50)$. Similarly as in \cite{ChambollePock16,nesterov2005smooth}, for the coefficients of $\bm{A}$ we consider a uniform distribution in $[0,1]$ or a normal distribution of mean $0$ and variance $1$. We compare \spcbap{} with \rmp{}, which is known to achieve the best empirical performance compared to a wide range of algorithms, including Hedge and other first-order methods~\citep{kroer2020ieor8100,kroer2018solving,farina2019optimistic}.   
We also compare with two other scale-free and parameter-free no-regret algorithms, AdaHedge~\citep{de2014follow} and AdaFTRL~\citep{orabona2015scale}, with the $\ell_{2}$ norm as the Bregman divergence. Similarly as for \spcbap, for \rmp{} we use the repeated game framework with alternation, along with linear averaging on the decisions and uniform averaging on the payoffs. In Figures \ref{fig:bilinear-games-period}-\ref{fig:bilinear-games-times}, we compare the performance of the four algorithms (\spcbap, \rmp, AdaHedge and AdaFTRL) for solving \eqref{eq:matrix-games}. In Figure \ref{fig:bilinear-games-duality-uniform} and Figure \ref{fig:bilinear-games-duality-normal}, we let the four algorithms run for $T=1000$ iterations, and we show the duality gap of the current running average as a function of the number of iterations. This shows the progress made by the algorithms toward solving \eqref{eq:matrix-games} at each iteration. In Figure \ref{fig:bilinear-games-time-uniform} and Figure \ref{fig:bilinear-games-time-normal}, we run the four algorithms for \tmm{} = 10 seconds, and we show the duality gap as a function of the time of computation. We average all the results over 50 randomly generated instances. Note that both axis are in logarithmic scale.

\paragraph{Results and discussion}
When we compare the duality gap as a function of the number of iterations (Figure \ref{fig:bilinear-games-duality-uniform} and Figure \ref{fig:bilinear-games-duality-normal}), we note that \spcbap{} performs on par with \rmp, and both algorithms vastly outperform AdaHedge and AdaFTRL. However, each iteration of \spcbap{} on the simplex requires solving $O\left(n \log(n)\right)$ arithmetic operations (see Section \ref{sec:projection-simplex}), whereas each iteration of \rmp{} can be performed in $O(n)$ operations. Therefore, when we compare the duality gap as a function of the computation time (Figure \ref{fig:bilinear-games-time-uniform} and Figure \ref{fig:bilinear-games-time-normal}), we note that \rmp{} outperforms \spcbap{}, even though after roughly ten seconds of computation, the performances of \spcbap{} and \rmp{} are equivalent.
\begin{figure}[hbt]
\begin{center}
   \begin{subfigure}{0.35\textwidth}
         \includegraphics[width=1.0\linewidth]{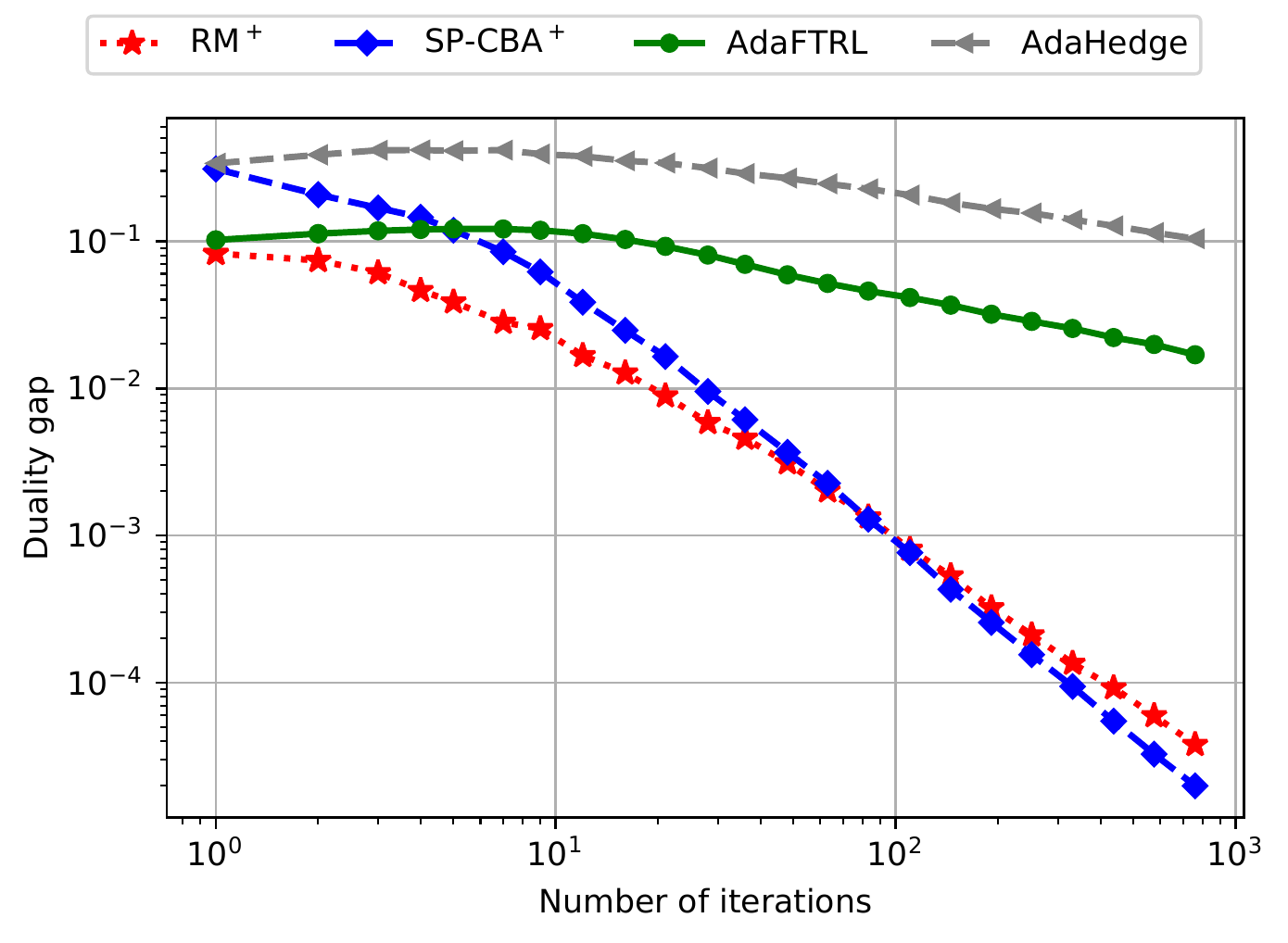}
         \caption{Uniform}
          \label{fig:bilinear-games-duality-uniform}
  \end{subfigure}
   \begin{subfigure}{0.35\textwidth}
         \includegraphics[width=1.0\linewidth]{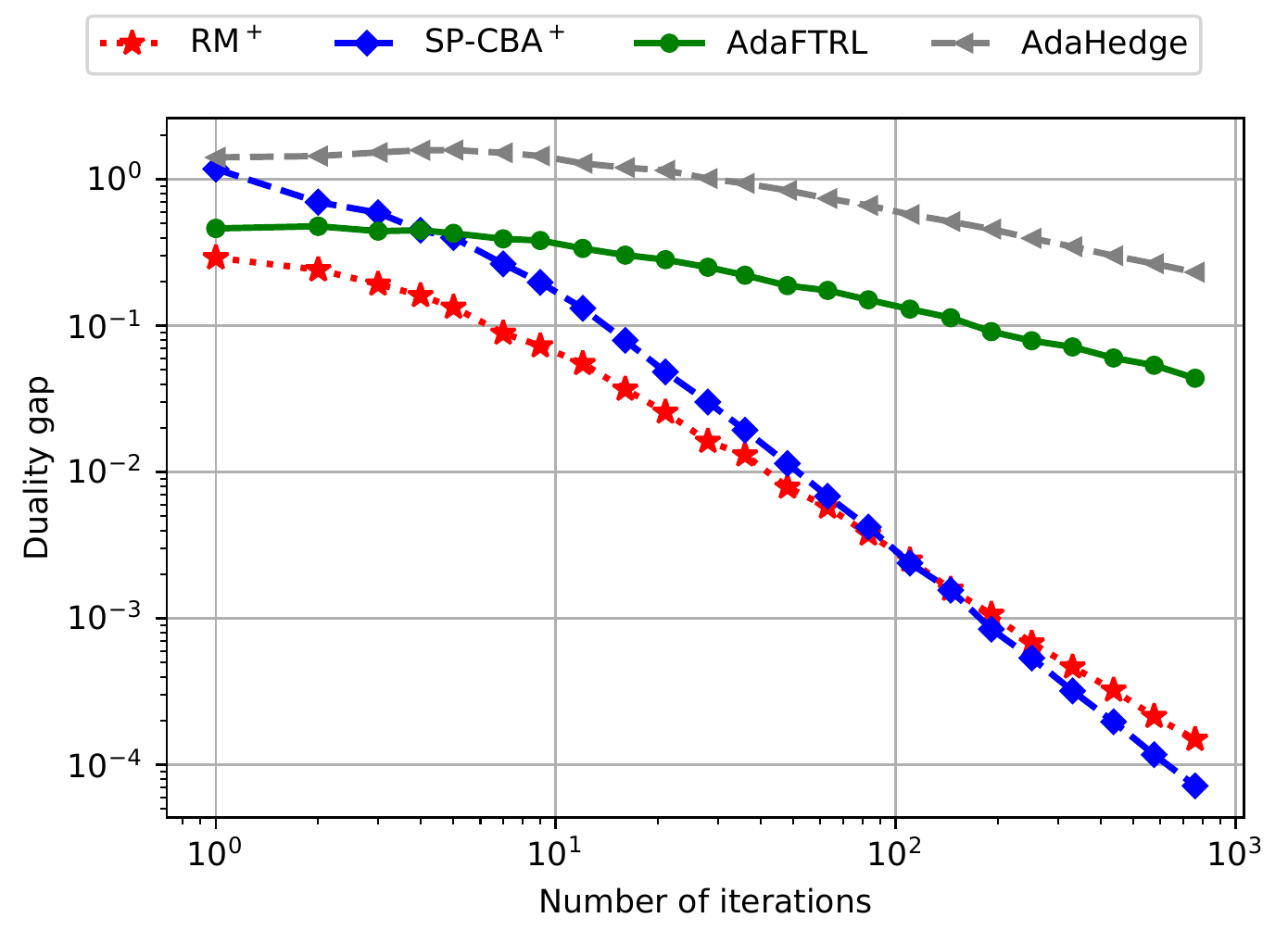}
         \caption{Normal}
 \label{fig:bilinear-games-duality-normal}
  \end{subfigure}
\end{center}
  \caption{Comparison of \spcbap, \rmp, AdaHedge and AdaFTRL on instances of matrix games, with respect to the number of iterations. The payoffs are chosen randomly, with uniform distribution in Figure \ref{fig:bilinear-games-duality-uniform} and normal distribution in Figures \ref{fig:bilinear-games-duality-normal}.}
  \label{fig:bilinear-games-period}
\end{figure}
\begin{figure}[hbt]
\begin{center}
   \begin{subfigure}{0.35\textwidth}
         \includegraphics[width=1.0\linewidth]{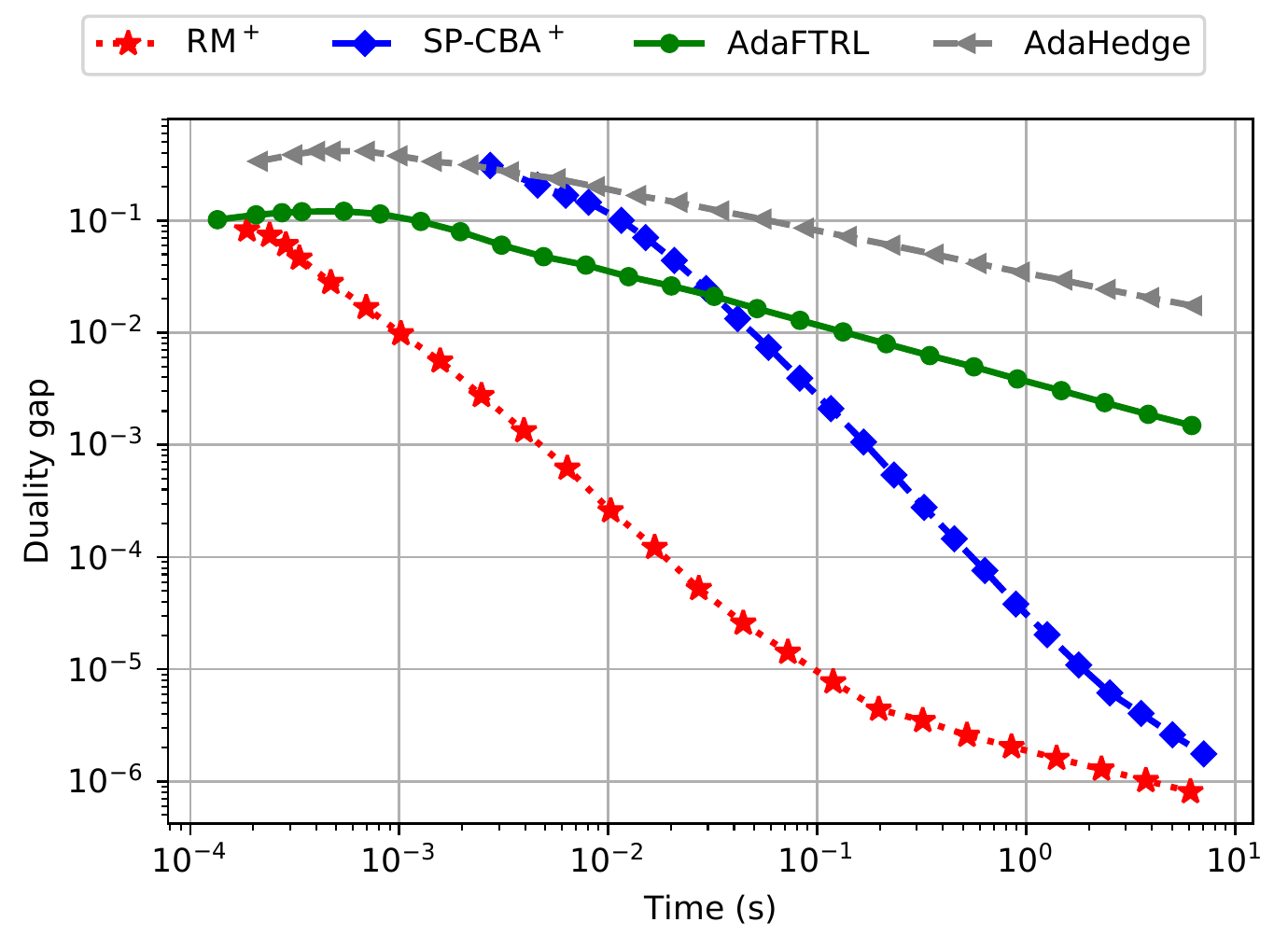}
         \caption{Uniform}
             \label{fig:bilinear-games-time-uniform}
  \end{subfigure}
   \begin{subfigure}{0.35\textwidth}
         \includegraphics[width=1.0\linewidth]{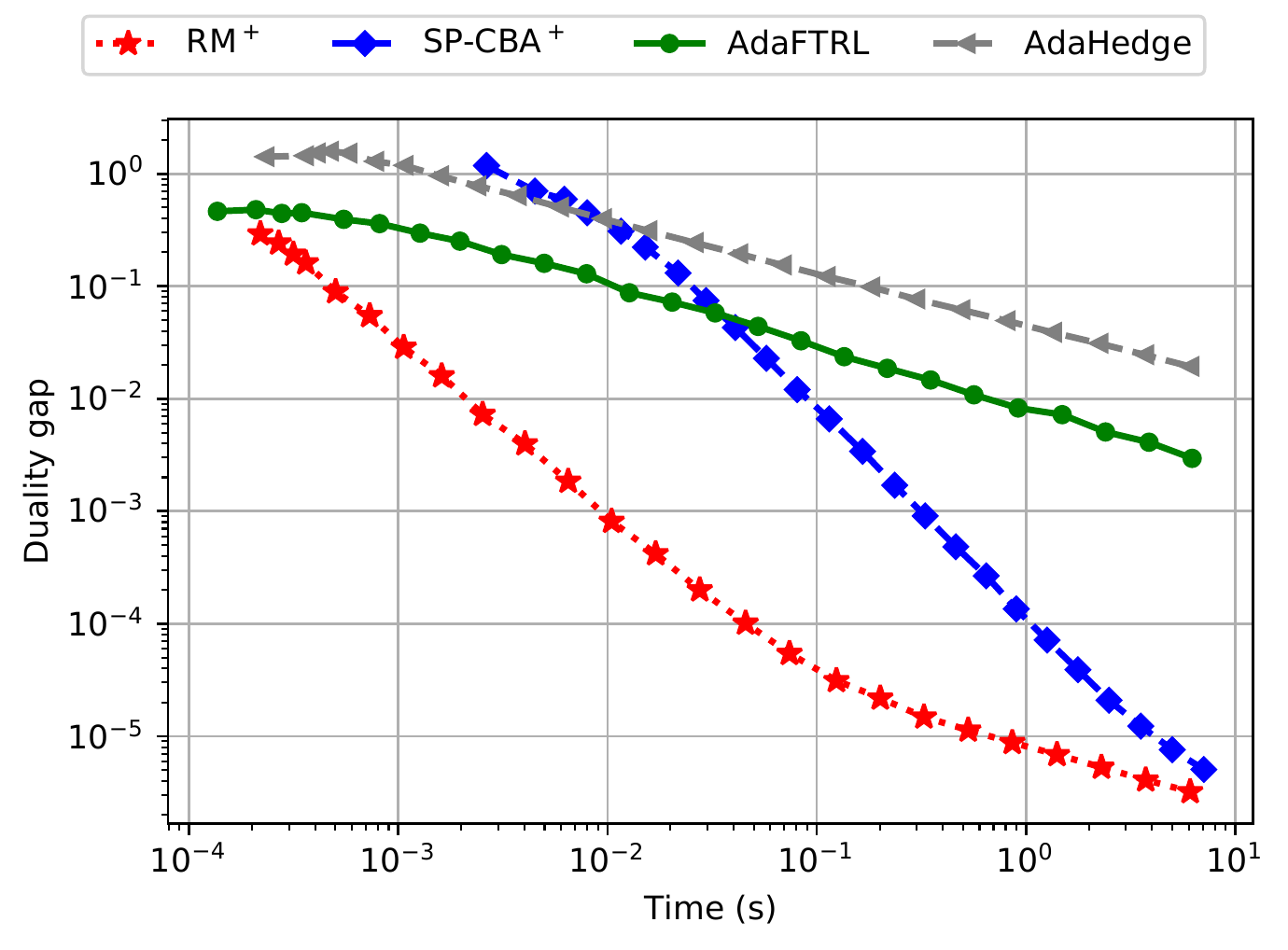}
\caption{Normal}
\label{fig:bilinear-games-time-normal}
  \end{subfigure}
\end{center}
  \caption{Comparison of \spcbap, \rmp, AdaHedge and AdaFTRL on instances of matrix games, with respect to computation time. The payoffs are chosen randomly, with uniform distribution in Figure \ref{fig:bilinear-games-time-uniform}, and normal distribution in Figure \ref{fig:bilinear-games-time-normal}.}
  \label{fig:bilinear-games-times}
\end{figure}

\subsection{Extensive-form games}\label{sec:simu-efg}
Extensive-form games (EFGs,~\citep{stengel1996efficient,zinkevich2007regret}) are used to model sequential games with imperfect information. For example, they were used for superhuman poker AIs in games such as Texas hold'em~\citep{tammelin2015solving,brown2018superhuman,brown2019superhuman,moravvcik2017deepstack}. EFGs can be written as saddle-point problems, with a bilinear objective functions and polytopes $\mcX,\mcY$ encoding the players' decision spaces~\citep{stengel1996efficient}. Based on the counterfactual regret minimization (CFR) framework~\citep{zinkevich2007regret}, EFGs can be solved via decomposition into a set of simplex-based regret minimization problems. We point the reader to \citep{farina2019online,farina2019regret} for more details. 

\paragraph{Experimental setup}
For solving EFGs, we combine the CFR decomposition with \cbap{} as a regret minimizer on the simplex. For the sake of simplicity, we will still call the resulting algorithm \spcbap{} (since we use alternation and linear averaging on the decisions), even though the algorithm relies on the CFR decomposition for EFGs (which is not necessary for solving the other saddle-point instances from Section \ref{sec:simu-bspp}, Section \ref{sec:simu-dro} and Section \ref{sec:simu-mdp}). We compare \spcbap{} with \cfrp~\citep{bowling2015heads}, the algorithm with the strongest empirical performance for solving EFGs. 
Note that both \spcbap{} and \cfrp{} guarantee a $O(1/\sqrt{T})$ rate of convergence to a Nash equilibrium. We compare \spcbap{} and \cfrp{} on several Leduc poker benchmark instances, a search game, and sheriff; we refer to \citep{farina2021faster} for details about the instances. Similarly as in Section \ref{sec:simu-bspp}, we compare the progress of \spcbap{} and \cfrp{} both as a function of computation time and number of iterations in the repeated game framework. We run the algorithms for \tmm{} = 100 seconds and $T=1500$ iterations; note that we choose \tmm{} and $T$ larger for EFGs than for matrix games because the EFG instances are way larger than the matrix games from Section~\ref{sec:simu-bspp}.

\paragraph{Results and discussion}
If we only consider the duality gap as a function of the number of iterations (Figure \ref{fig:efgs-times}), \spcbap{} performs on par with \cfrp, and significantly outperforms \cfrp{} on some EFGs instances (Figure \ref{fig:efgs-steps-search-game-4} and Figure \ref{fig:efgs-steps-battleship}). However, when we consider the progress made by each algorithm during \tmm{} = 100 seconds (Figure \ref{fig:efgs-times}), \cfrp{} enjoys better numerical performances than \spcbap. This is because the updates are  closed-form in \cfrp, whereas each update of \spcbap{} requires to solve an equation, a situation similar as for matrix games over the simplex (Section \ref{sec:simu-efg}). 
It is interesting to note that for EFGs, the difference in per-iteration computation time has a bigger impact than for matrix games; it is possible that this is due to our python-based implementation of \spcbap. Better implementations of \spcbap\ for EFGs could potentially lead to better results. To conclude this section, we note that \cfrp{} enjoys the best empirical performances for solving EFGs, and it is not concerning that \spcbap{} can not outperform \cfrp{} on EFGs (in terms of computation time). Instead, we will see in the next section how \spcbap{} carries over these very strong empirical results to saddle-point instances where \cfrp{} does not apply and where \spcbap{} can be implemented more efficiently.
% CK TODO: consider adding something about the fact that gradient computations dominate the cost per iteration at scale
\begin{figure}[hbt]
\begin{center}
   \begin{subfigure}{0.24\textwidth}
         \includegraphics[width=1.0\linewidth]{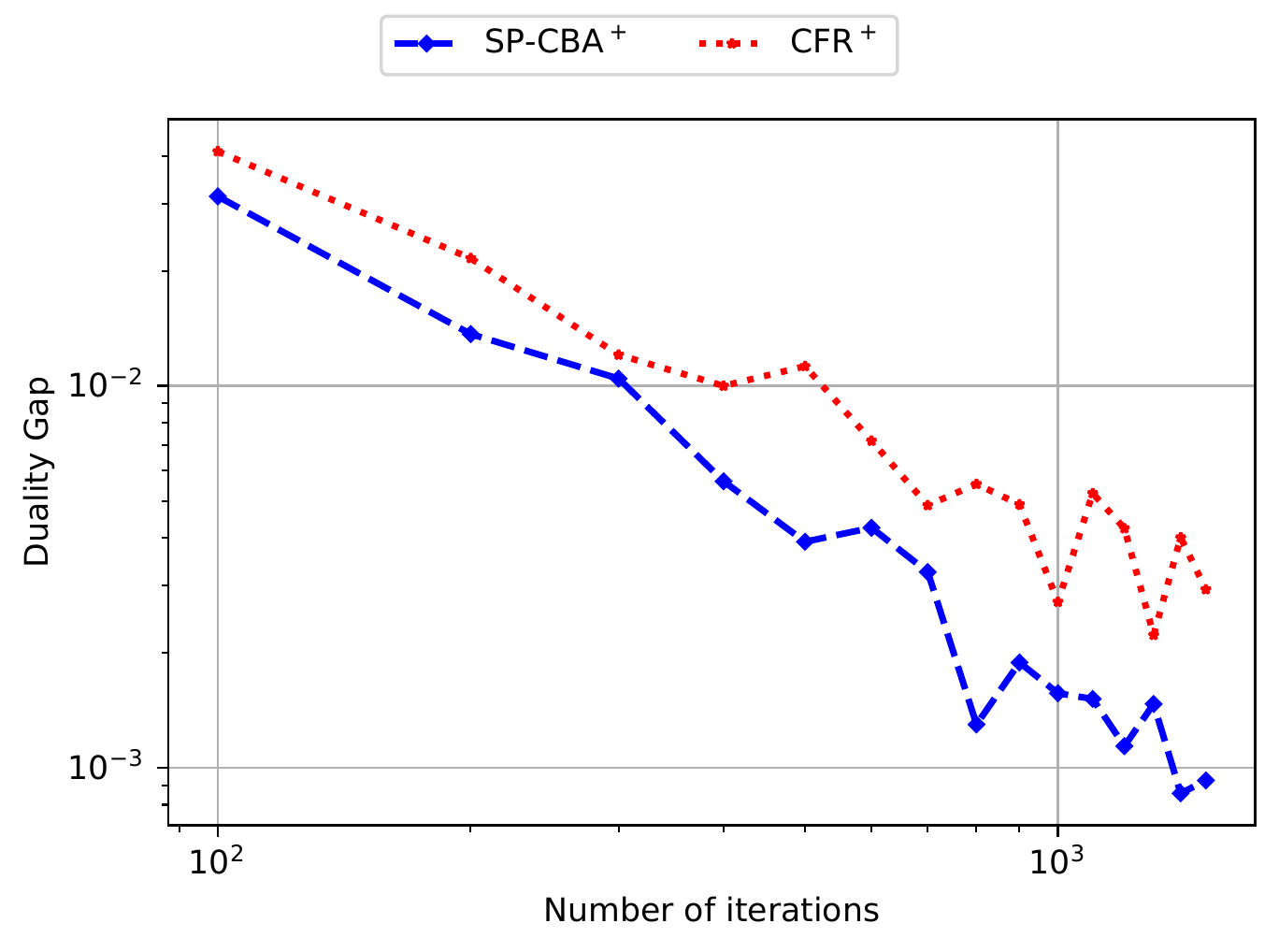}
         \caption{Search game (4 turns)}
             \label{fig:efgs-steps-search-game-4}
  \end{subfigure}
    \begin{subfigure}{0.24\textwidth}
         \includegraphics[width=1.0\linewidth]{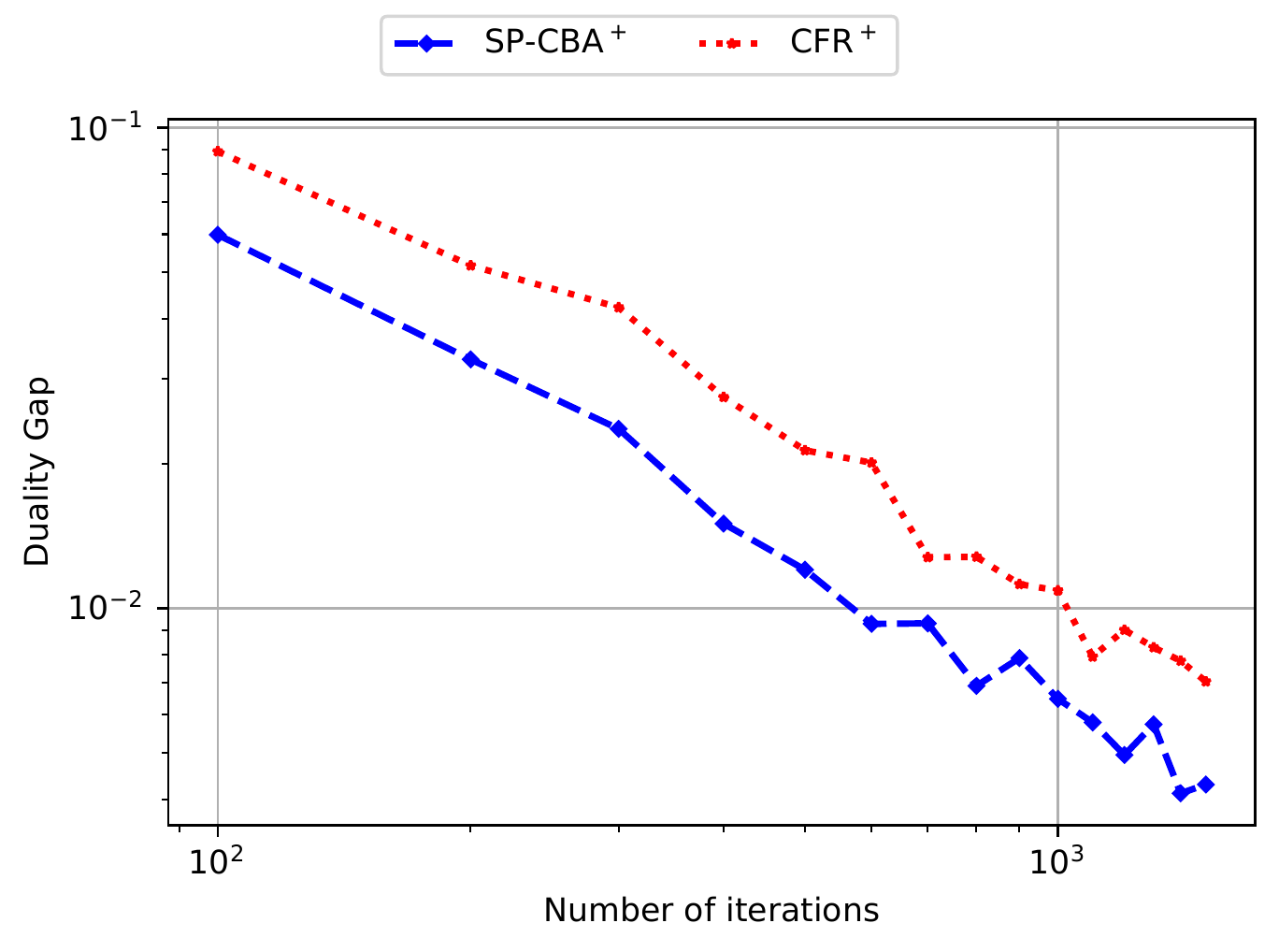}
         \caption{Battleship (3 turns)}
             \label{fig:efgs-steps-battleship}
  \end{subfigure}
   \begin{subfigure}{0.24\textwidth}
         \includegraphics[width=1.0\linewidth]{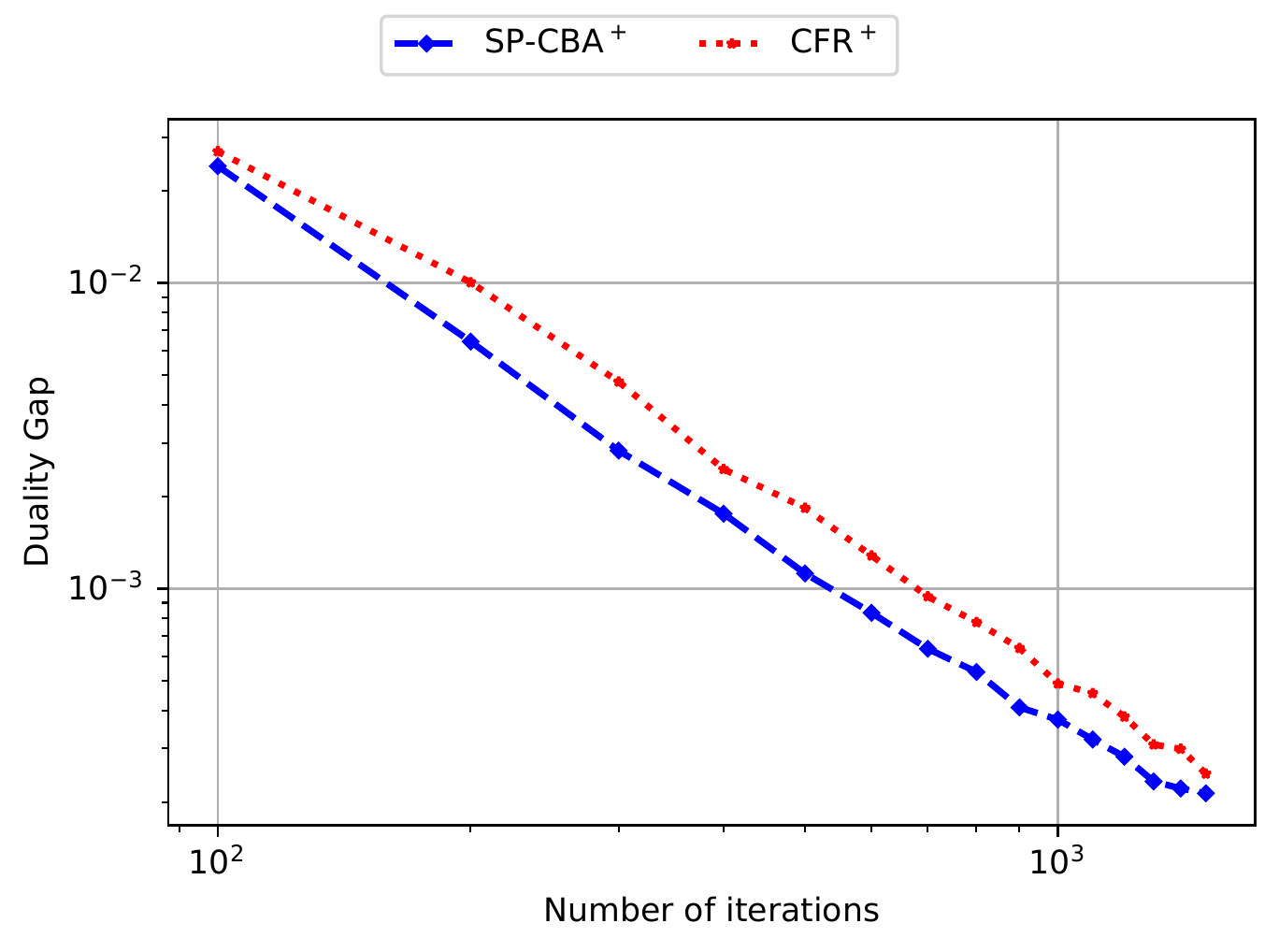}
         \caption{Leduc, 2pl. 3 ranks}
             \label{fig:efgs-steps-leduc_2pl_3ranks}
  \end{subfigure}
   \begin{subfigure}{0.24\textwidth}
         \includegraphics[width=1.0\linewidth]{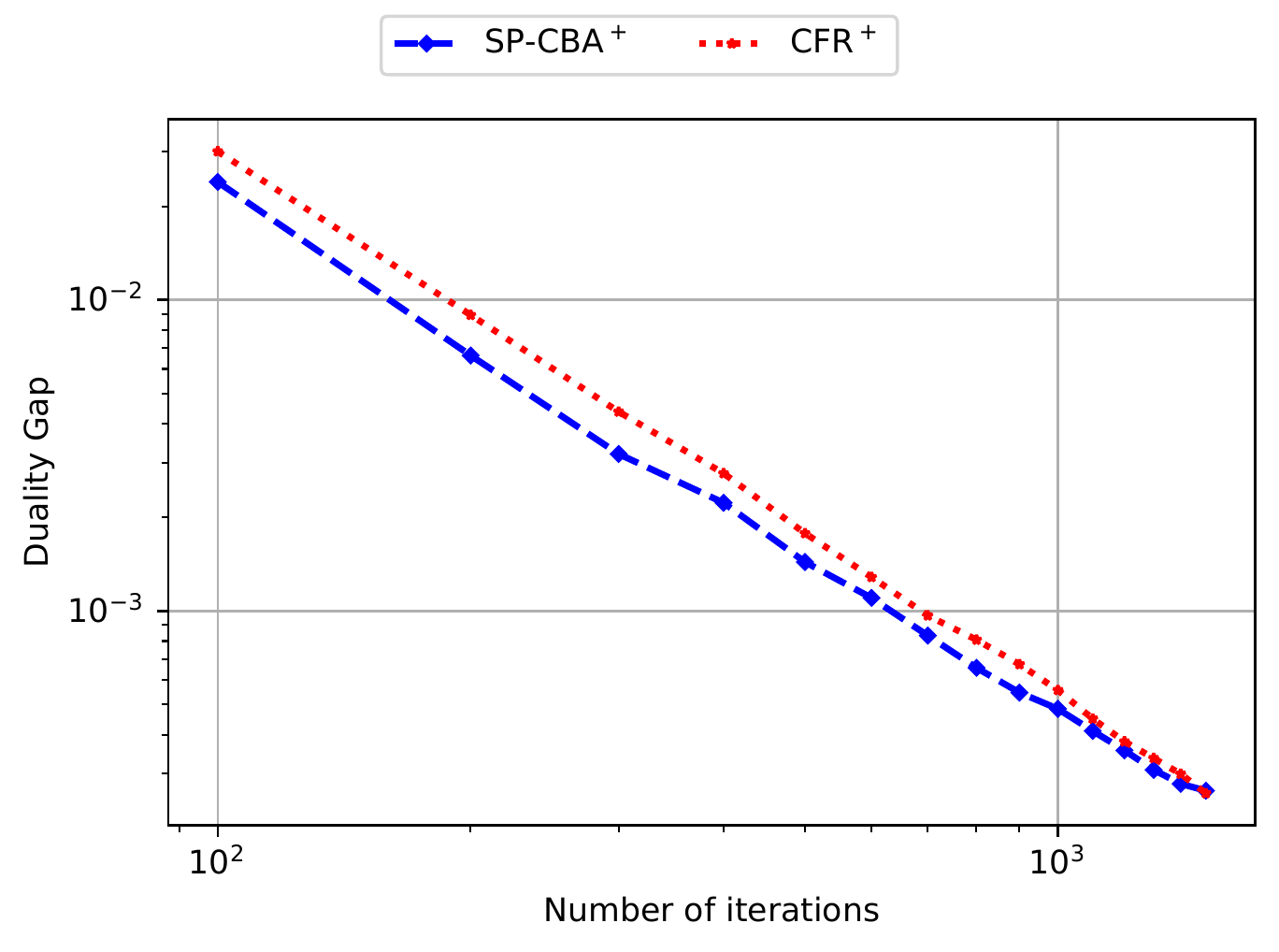}
         \caption{Leduc, 2pl. 5 ranks}
             \label{fig:efgs-steps-leduc_2pl_5ranks}
  \end{subfigure}
\end{center}
  \caption{Comparison of \spcbap{} and \cfrp{} for solving extensive-form games, as regards the number of iterations.}
  \label{fig:efgs-steps}
\end{figure}
\begin{figure}[hbt]
\begin{center}
   \begin{subfigure}{0.24\textwidth}
         \includegraphics[width=1.0\linewidth]{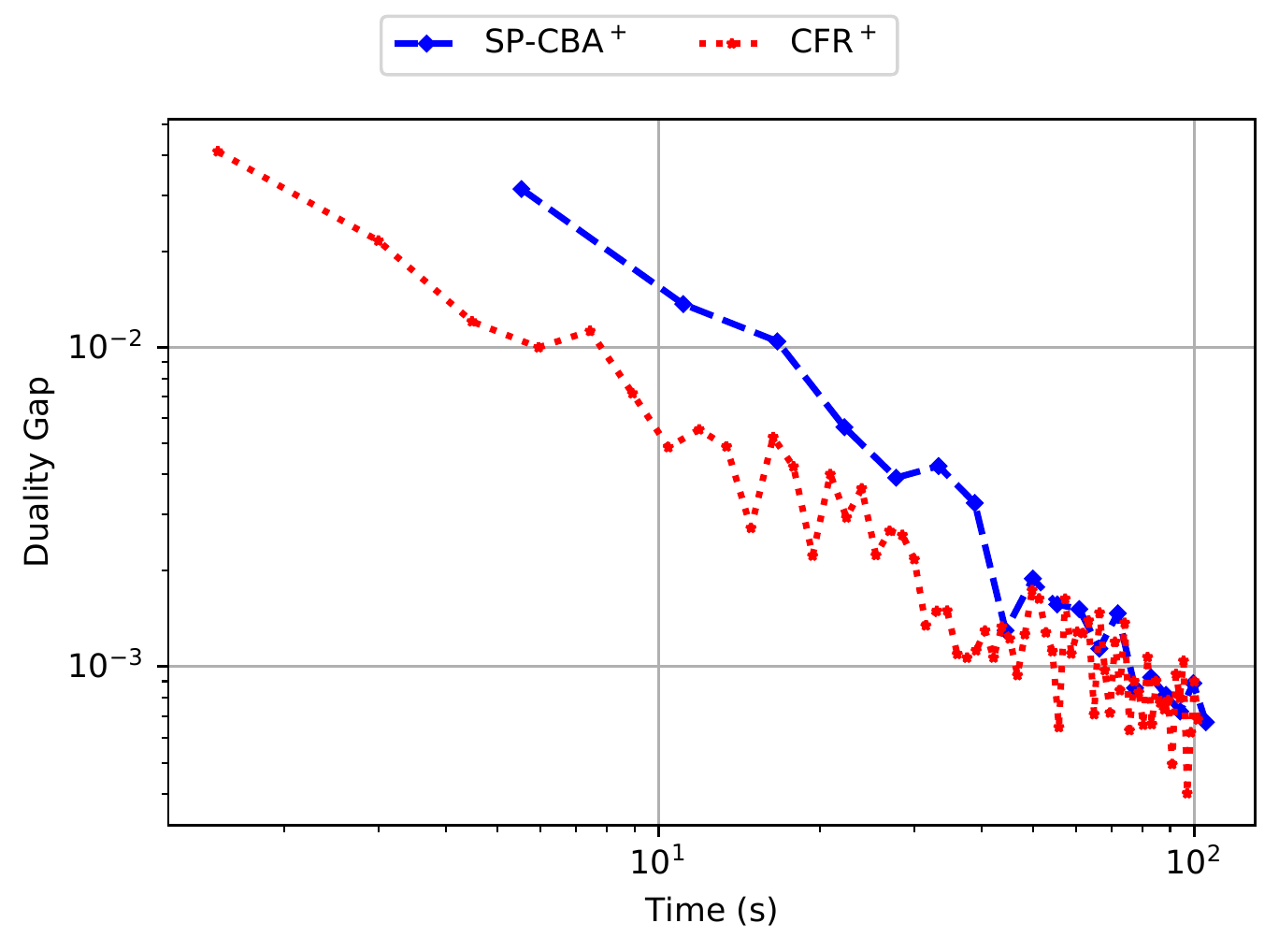}
         \caption{Search game (4 turns)}
             \label{fig:efgs-time-search-game-4}
  \end{subfigure}
    \begin{subfigure}{0.24\textwidth}
         \includegraphics[width=1.0\linewidth]{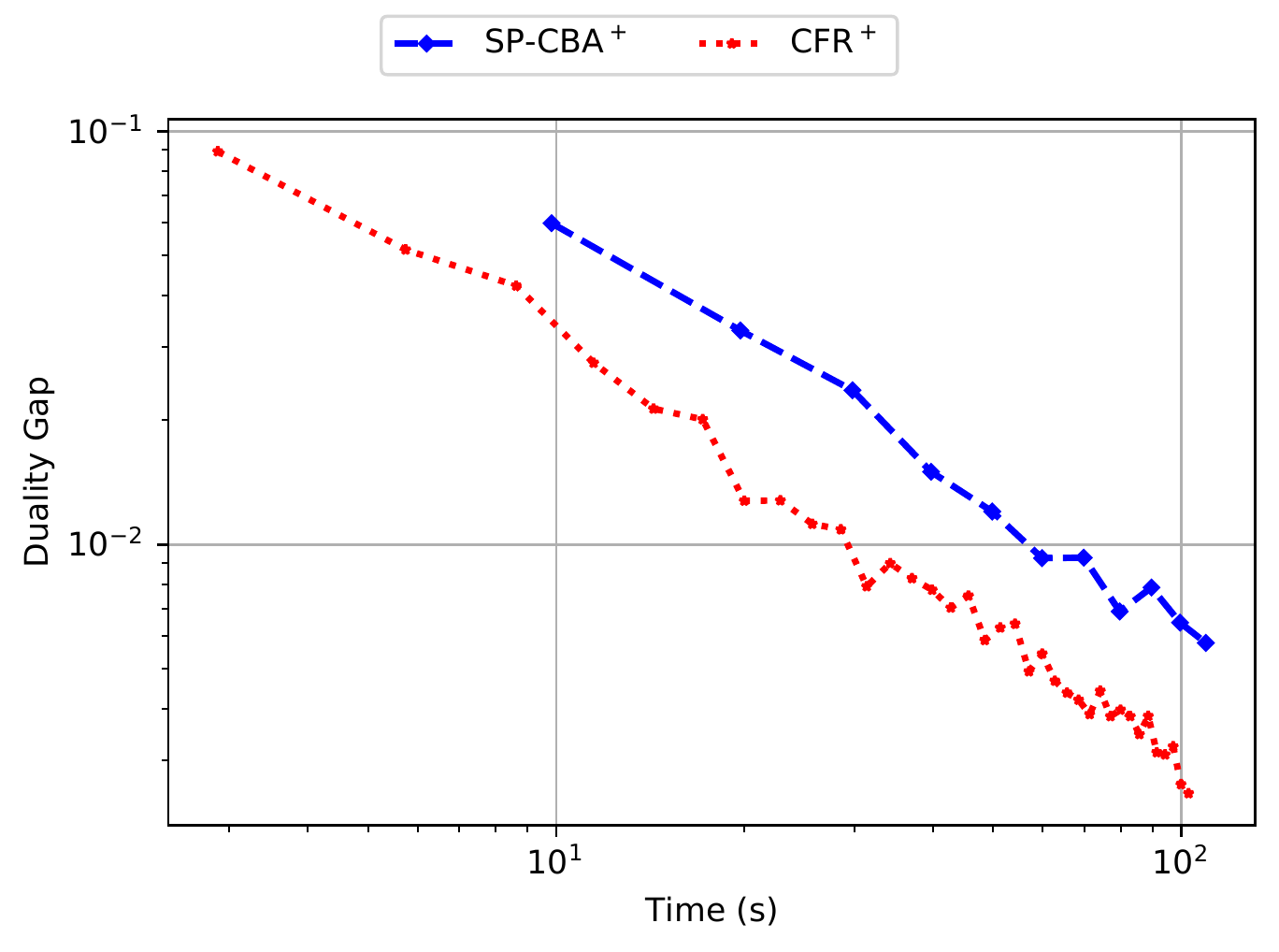}
         \caption{Battleship (3 turns)}
             \label{fig:efgs-time-battleship}
  \end{subfigure}
   \begin{subfigure}{0.24\textwidth}
         \includegraphics[width=1.0\linewidth]{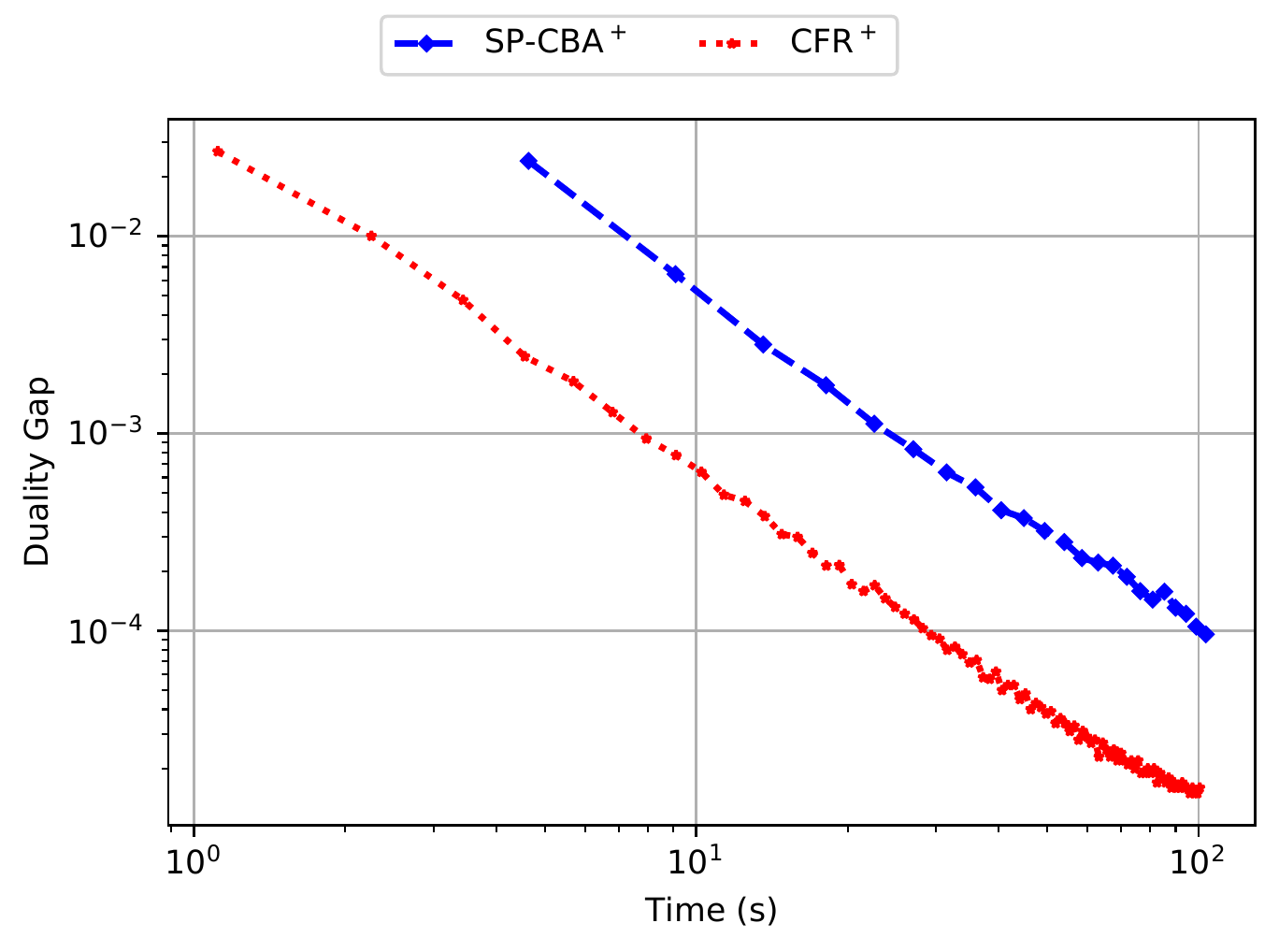}
         \caption{Leduc, 2pl. 3 ranks}
             \label{fig:efgs-time-leduc_2pl_3ranks}
  \end{subfigure}
   \begin{subfigure}{0.24\textwidth}
         \includegraphics[width=1.0\linewidth]{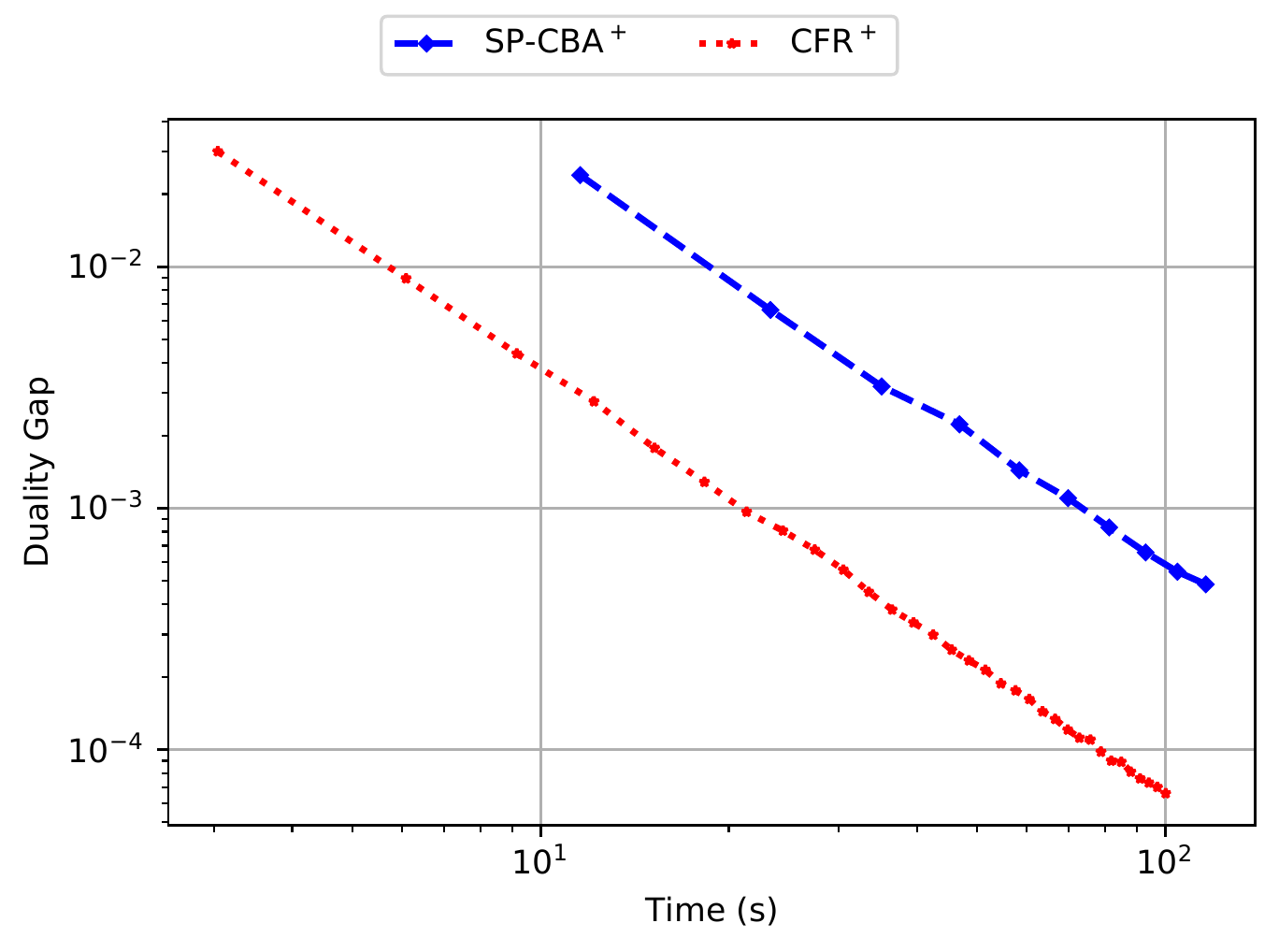}
         \caption{Leduc, 2pl. 5 ranks}
             \label{fig:efgs-time-leduc_2pl_5ranks}
  \end{subfigure}
\end{center}
  \caption{Comparison of \spcbap{} and \cfrp{} for solving extensive-form games, based on the computation time.}
  \label{fig:efgs-times}
\end{figure}
\subsection{Distributionally robust logistic regression}\label{sec:simu-dro}
Distributionally robust optimization exploits knowledge of the statistical properties of the model parameters to obtain risk-averse optimal solutions~\citep{rahimian2019distributionally}.
We focus on the following instance of distributionally robust logistic regression \citep{namkoong2016stochastic,ben2015oracle}.  There are $m$ observed feature-label pairs $\left( \bm{a}_{i},b_{i} \right) \in \mbR^{n} \times \{-1,1\}$,  and we want to solve 
\begin{equation}\label{eq:dro}
\min_{\bm{x} \in \mbR^{n}, \| \bm{x} - \bm{x}_{0}\|_{2} \leq \epsilon_{x}} \max_{\bm{y} \in \Delta(m), \| \bm{y} - \bm{y}_{0} \|_{2} \leq \epsilon_{y}} \sum_{i=1}^{m} y_{i} \ell_{i}(\bm{x})+\frac{\mu}{2}\| \bm{x}\|_{2}^{2}
\end{equation}
where $\ell_{i}(\bm{x}) = \log(1+\exp(-b_{i}\bm{a}^{\top}_{i}\bm{x}))$ and $\mu \geq 0$. The formulation \eqref{eq:dro} takes a worst-case approach to put more weight on misclassified observations and provides some statistical guarantees, e.g., it can be seen as a convex regularization of standard empirical risk minimization instances~\citep{duchi2021statistics}.
\paragraph{Experimental setup}
We compare \spcbap{} with four classical first-order methods (FOMs): Online Mirror Descent (\ref{alg:OMD}), Optimistic OMD (\ref{alg:pred-OMD}), Follow-The-Regularized-Leader (\ref{alg:FTRL}) and Optimistic FTRL (\ref{alg:pred-FTRL}). We provide a detailed presentation of our implementations of these algorithms and our experimental setting in Appendix \ref{app:details-omd}; we use the $\ell_{2}$ norm as the Bregman divergence. 
We compare the performances of these algorithms with \spcbap{} on two synthetic datasets and two real data sets.  
We use parameters $\bm{x}_{0} = \left(1,...,1\right)/n,\epsilon_{x}=10,\bm{y}_{0} = \left(1,...,1\right)/m,\epsilon_{y} = 1/2m,\mu=0.1$ in \eqref{eq:dro}, and we initialize all algorithms at $\bm{x}_{0},\bm{y}_{0}$.
For the synthetic classification instances, we generate a vector $\bm{x}^{*} \in \mbR^{n}$,  we sample some vectors $\bm{a}_{i} \in \mbR^{n}$ at random for $i \in \{1,...,m\}$,  set labels $b_{i} = \text{sign} (\bm{a}^{\top}_{i}\bm{x}^{*})$, and then  we flip $10 \%$ of the labels. We consider two types of synthetic instances: one where $a_{ij}$ is sampled from a uniform distribution in $[0,1]$, and one where $a_{ij}$ is sampled from a normal distribution with mean $0$ and variance $1$.
For the real classification instances, we use the following datasets from the {\sf libsvm} website\footnote{https://www.csie.ntu.edu.tw/$\sim$cjlin/libsvmtools/datasets/}: \textit{adult} and {\em splice}. 

One of the main motivations for \spcbap{} is to obtain a \textit{parameter-free} algorithm.
In contrast, the other FOMs considered in this section require choosing step sizes $\eta_{t}$ at every iteration $t$. 
This is a major limitation in practice: if the step sizes are too small, the iterates may be very conservative, while the algorithms may diverge with very large step sizes. We will compare the performances of the FOMs for both the fixed, theoretically-correct step sizes, and for tuned step sizes. The computation of the theoretically-correct step sizes is presented in Appendix \ref{app:simu-dro-step-size}. 
To tune the FOMs, we run them for the first $10$ iterations, with step sizes $\eta_{t} = \alpha / \sqrt{t+1}$ for \ref{alg:OMD} and \ref{alg:FTRL} and step size $\eta_{t} = \alpha$ for \ref{alg:pred-OMD} and \ref{alg:pred-FTRL}, and we search for the best $\alpha \in \{0.01,0.1,1,10,100\}$. We then choose the value of $\alpha$ that lead to the smallest duality gap after $10$ iterations, and use this value for the remaining $T=1000$ iterations. Note that the tuning time and iterations (where the first $10$ iterations are repeated with various values of $\alpha$) are counted in the total computation time and number of iterations of the FOMs.
We acknowledge that this tuning method is only one possibility and that the multiplicative factor $\alpha$ could be chosen in many different ways. However, any other tuning framework would still be resource-demanding and uncertain. In contrast, \spcbap{} does not require any tuning, and, as we will see, outperforms even the tuned FOMs. Finally, on the $y$-axis we only report the worst-case loss of the current average $\bar{\bm{x}}_{T}$; in particular, we do not compute the duality gap at every iteration, because for a fixed value of $\bm{y}$, computing the optimal $\bm{x}$ requires solving a (regularized) nominal logistic regression, which would be computationally intensive to do at every iteration.

\paragraph{Proximal updates for the first-order methods}
Note that in \eqref{eq:dro}, \spcbap{} is instantiated on an $\ell_{2}$ ball (for the first player) and the intersection of an $\ell_{2}$ ball and the simplex (for the second player). As shown in Section \ref{sec:projection-ell-p-balls} and Section \ref{sec:projection-confidence-regions}, this leads to closed-form updates for \spcbap{} at every iteration. In contrast, \ref{alg:OMD}, \ref{alg:FTRL},  \ref{alg:pred-OMD}, and \ref{alg:pred-FTRL}  require binary searches for the decision of the second player at each iteration, see Appendix \ref{app:details-omd}.  The functions used in the binary searches themselves require solving an optimization program (an orthogonal projection onto the simplex) at each evaluation.  Even though computing the orthogonal projection of a vector onto the simplex of size $m$ can be done in $O(m \log(m))$, this results in slower overall running time, compared to \spcbap{} with closed-form updates at each iteration.  The situation is even worse for \ref{alg:pred-OMD}, which requires two proximal updates at each iteration.
\paragraph{Results and discussion}
In Figure \ref{fig:dro-period-theoretical}, we show the progress of all algorithms toward solving \eqref{eq:dro} as a function of the number of iterations, when the theoretical step sizes are used for the FOMs. We notice that all FOMs are progressing very slowly toward an optimal solution. This is because the theoretical step sizes are very small, relying on upper bounds on the Lipschitz constants of the objective function of \eqref{eq:dro}. In contrast, \spcbap{} quickly converges to an optimal solution, even though we see in Figure \ref{fig:dro-period-uniform-theoretical} that during the first few iterations, \spcbap{} may increase the objective function. In Figure \ref{fig:dro-period-tuned}, we tune the FOMs for the first $10$ iterations, before running them (with the tuned step sizes). We note that depending on the datasets, the tuned FOMs may perform very well (e.g., \ref{alg:OMD} in Figure \ref{fig:dro-period-uniform-tuned}, all FOMS in Figure \ref{fig:dro-period-normal-tuned}, \ref{alg:pred-OMD} in Figure \ref{fig:dro-period-australian-tuned}), but may also fail to converge to an optimal solution, even after very good performances during the first iterations (e.g., \ref{alg:pred-FTRL} in Figure \ref{fig:dro-period-australian-tuned}). This is because the convergence guarantees of the FOMs may fail to hold, for large choices of the multiplicative factor $\alpha$. In Figure \ref{fig:dro-time-theoretical} and Figure \ref{fig:dro-time-tuned}, we present the same experiments but where we record the computation time on the $x$-axis. Recall that the per-iteration computation time  of \spcbap{} is shorter than for the FOMs, because \spcbap{} has closed-form updates in this setting. Therefore, we still observe in Figures \ref{fig:dro-time-theoretical}-\ref{fig:dro-time-tuned} that \spcbap{} outperforms the classical FOMs.
\begin{figure}[hbt]
\begin{center}
   \begin{subfigure}{0.24\textwidth}
         \includegraphics[width=1.0\linewidth]{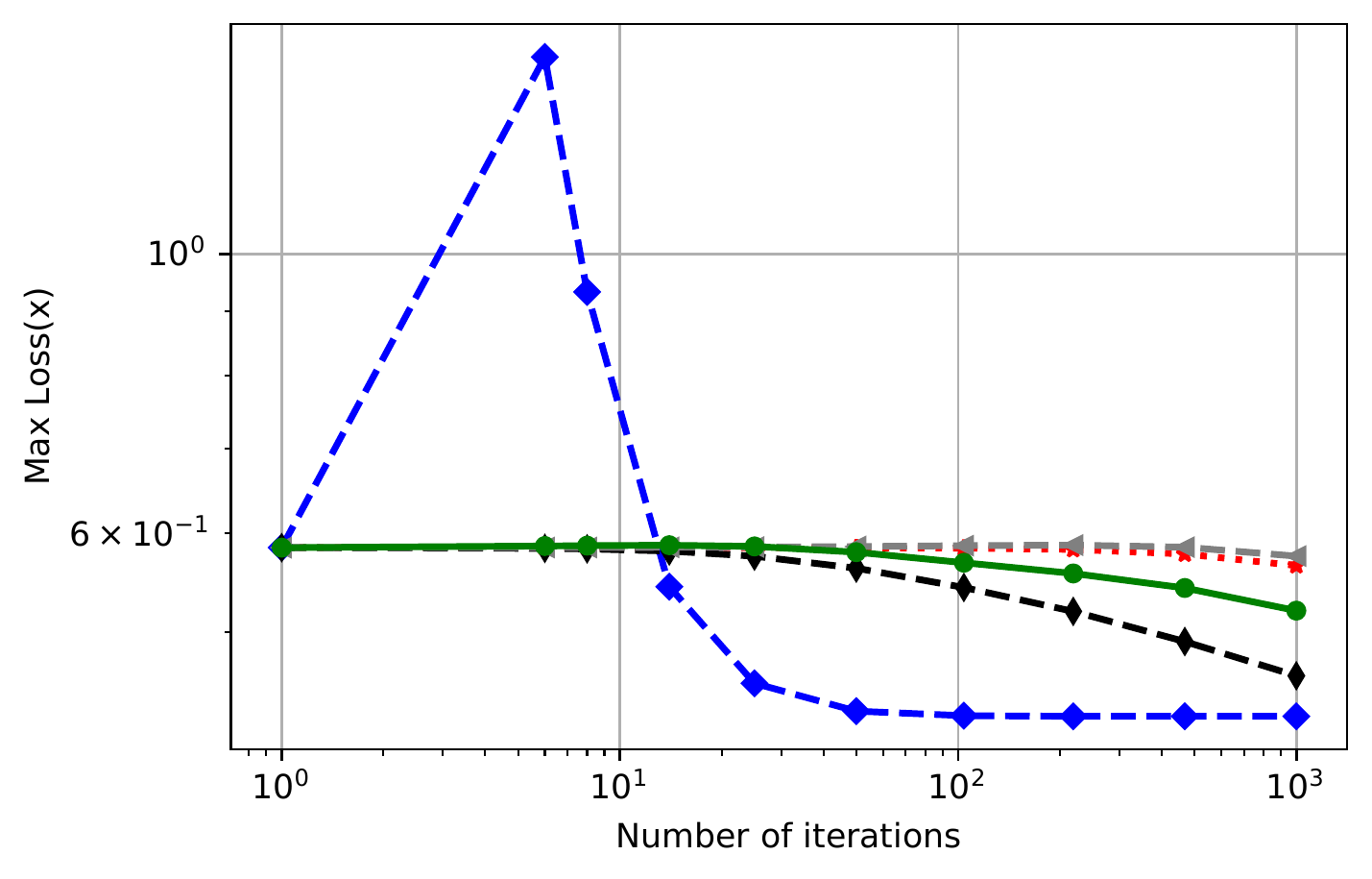}
         \caption{Uniform}
          \label{fig:dro-period-uniform-theoretical}
  \end{subfigure}
   \begin{subfigure}{0.24\textwidth}
         \includegraphics[width=1.0\linewidth]{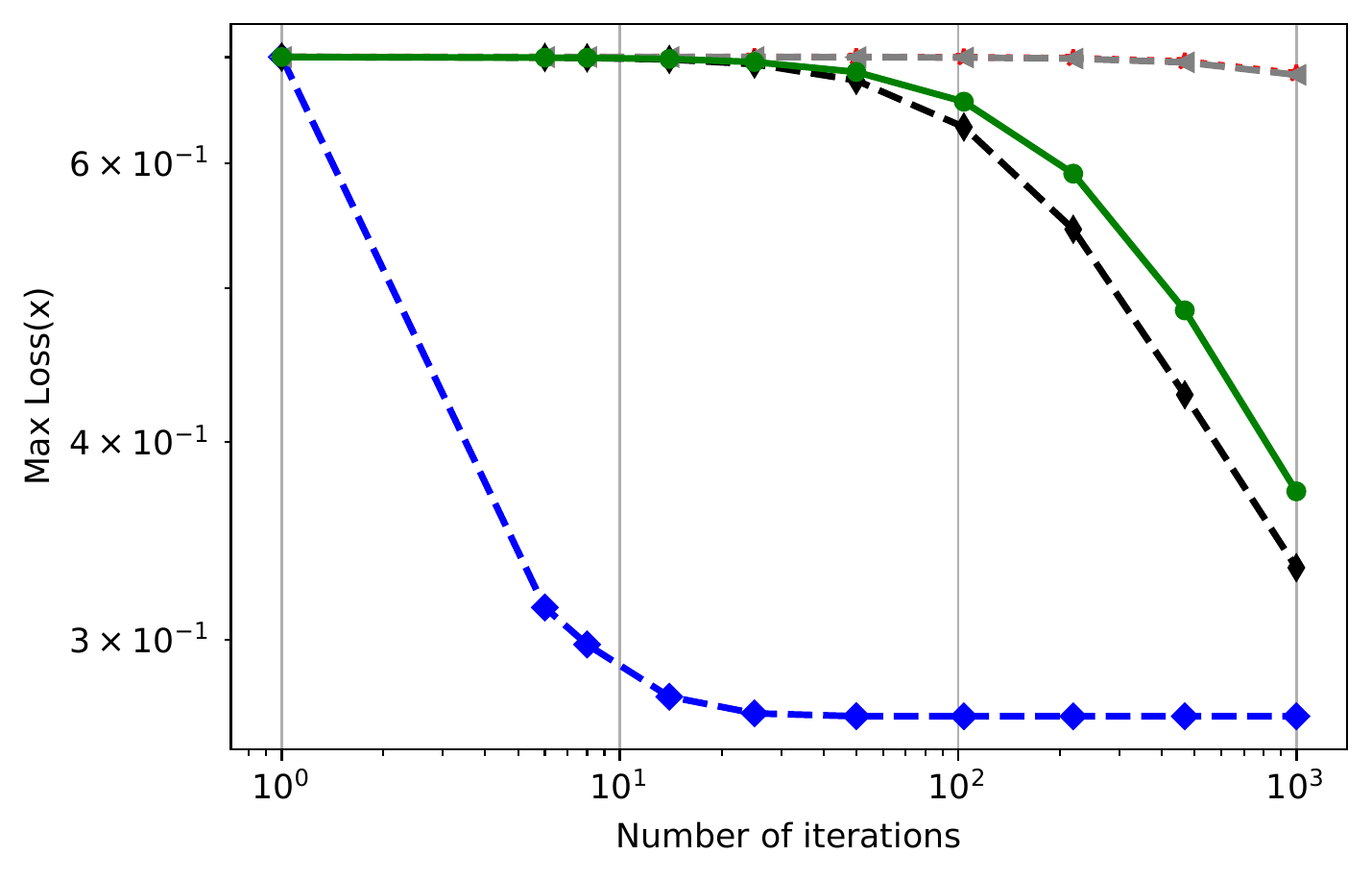}
         \caption{Normal}
          \label{fig:dro-period-normal-theoretical}
  \end{subfigure}
     \begin{subfigure}{0.24\textwidth}
         \includegraphics[width=1.0\linewidth]{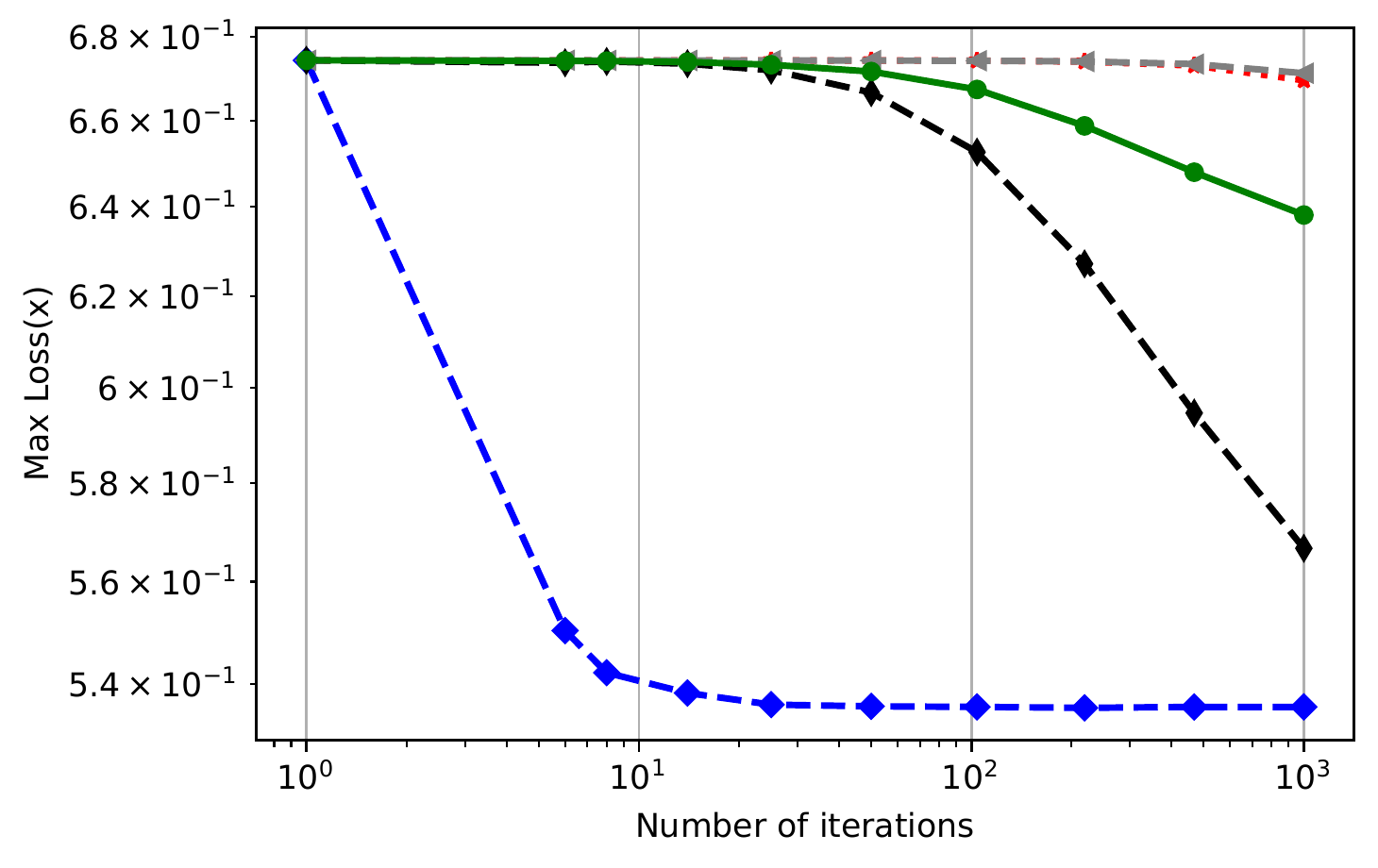}
         \caption{Australian}
          \label{fig:dro-period-australian-theoretical}
  \end{subfigure}
   \begin{subfigure}{0.24\textwidth}
         \includegraphics[width=1.0\linewidth]{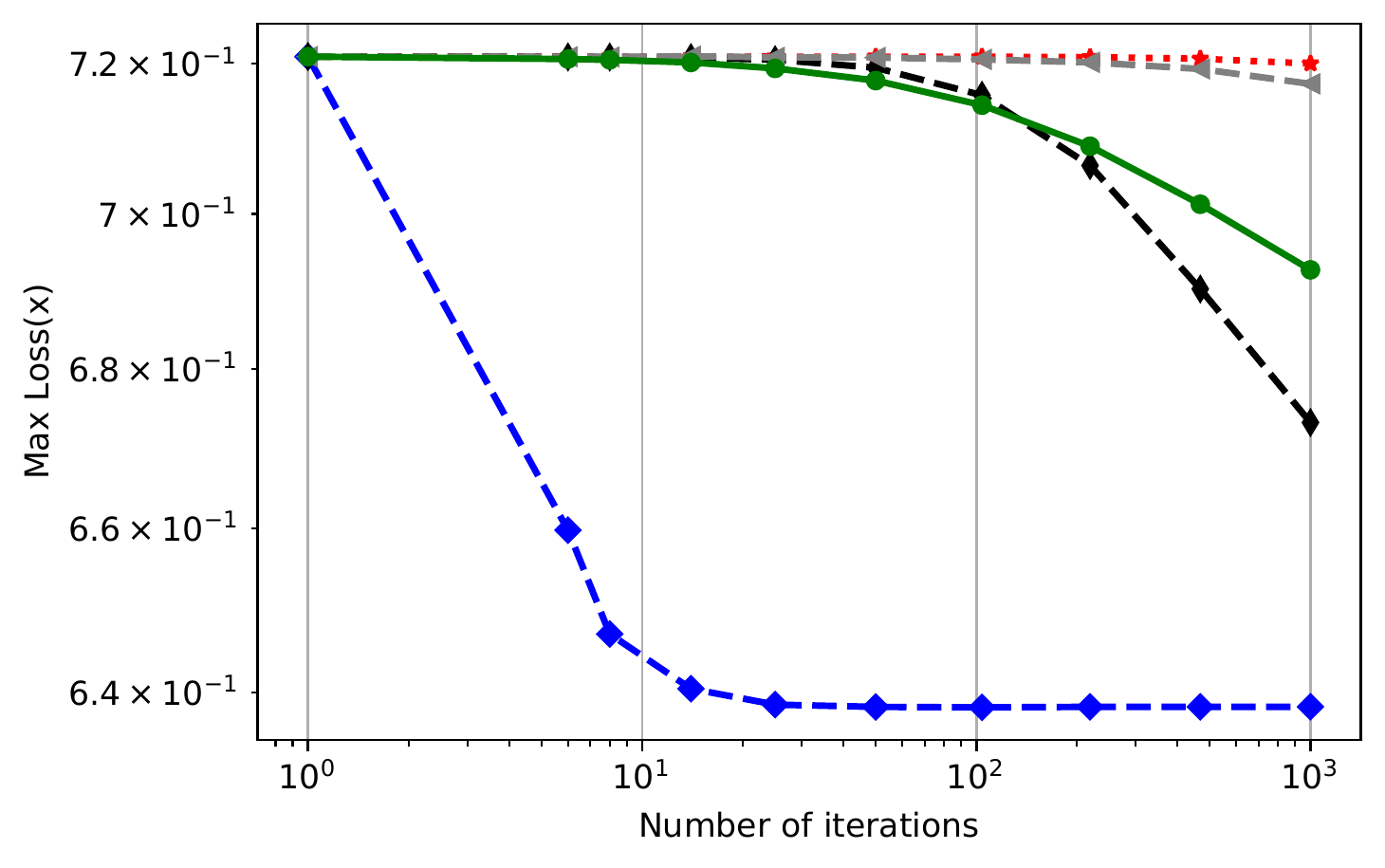}
         \caption{Splice}
          \label{fig:dro-period-splice-theoretical}
  \end{subfigure}
\end{center}
  \caption{Comparisons of \cbap, OMD, FTRL, OOMD and OFTRL to compute a solution to the distributionally robust logistic regression problem~\eqref{eq:dro}, based on the number of iterations. The theoretical choices of step sizes are used in the first-order methods.}
  \label{fig:dro-period-theoretical}
\end{figure}
\begin{figure}[hbt]
\begin{center}
   \begin{subfigure}{0.24\textwidth}
         \includegraphics[width=1.0\linewidth]{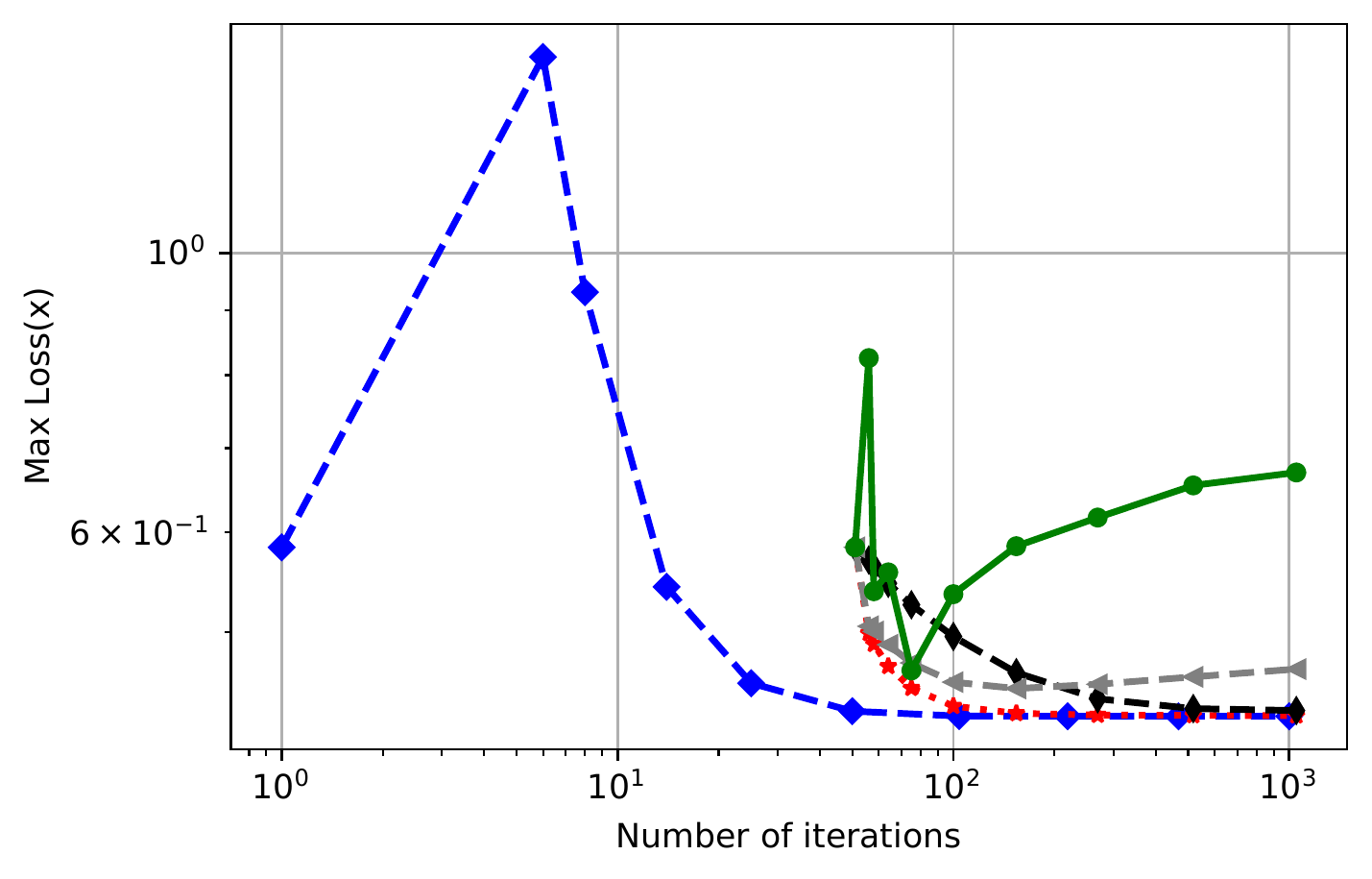}
         \caption{Uniform}
          \label{fig:dro-period-uniform-tuned}
  \end{subfigure}
   \begin{subfigure}{0.24\textwidth}
         \includegraphics[width=1.0\linewidth]{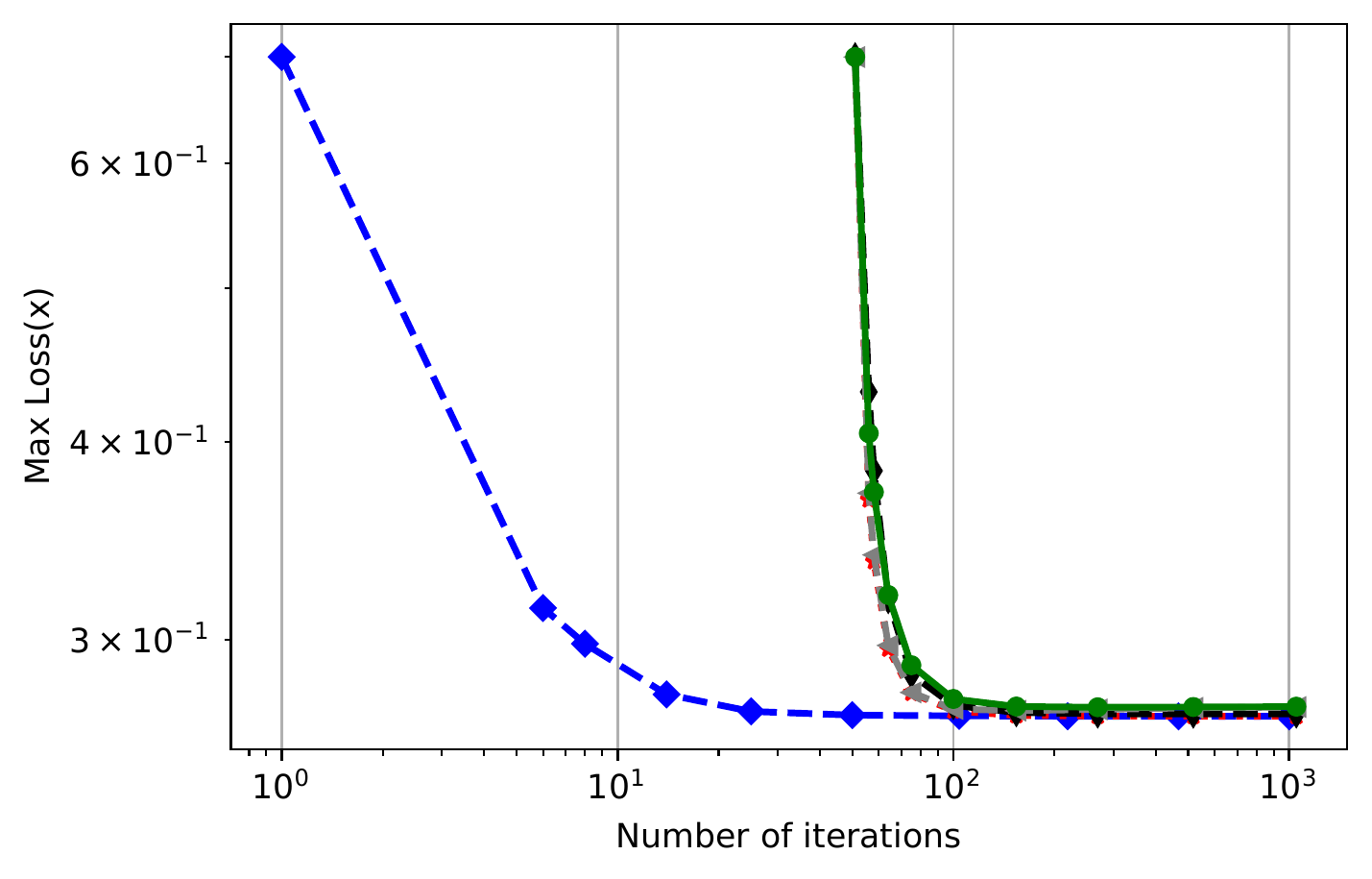}
         \caption{Normal}
          \label{fig:dro-period-normal-tuned}
  \end{subfigure}
     \begin{subfigure}{0.24\textwidth}
         \includegraphics[width=1.0\linewidth]{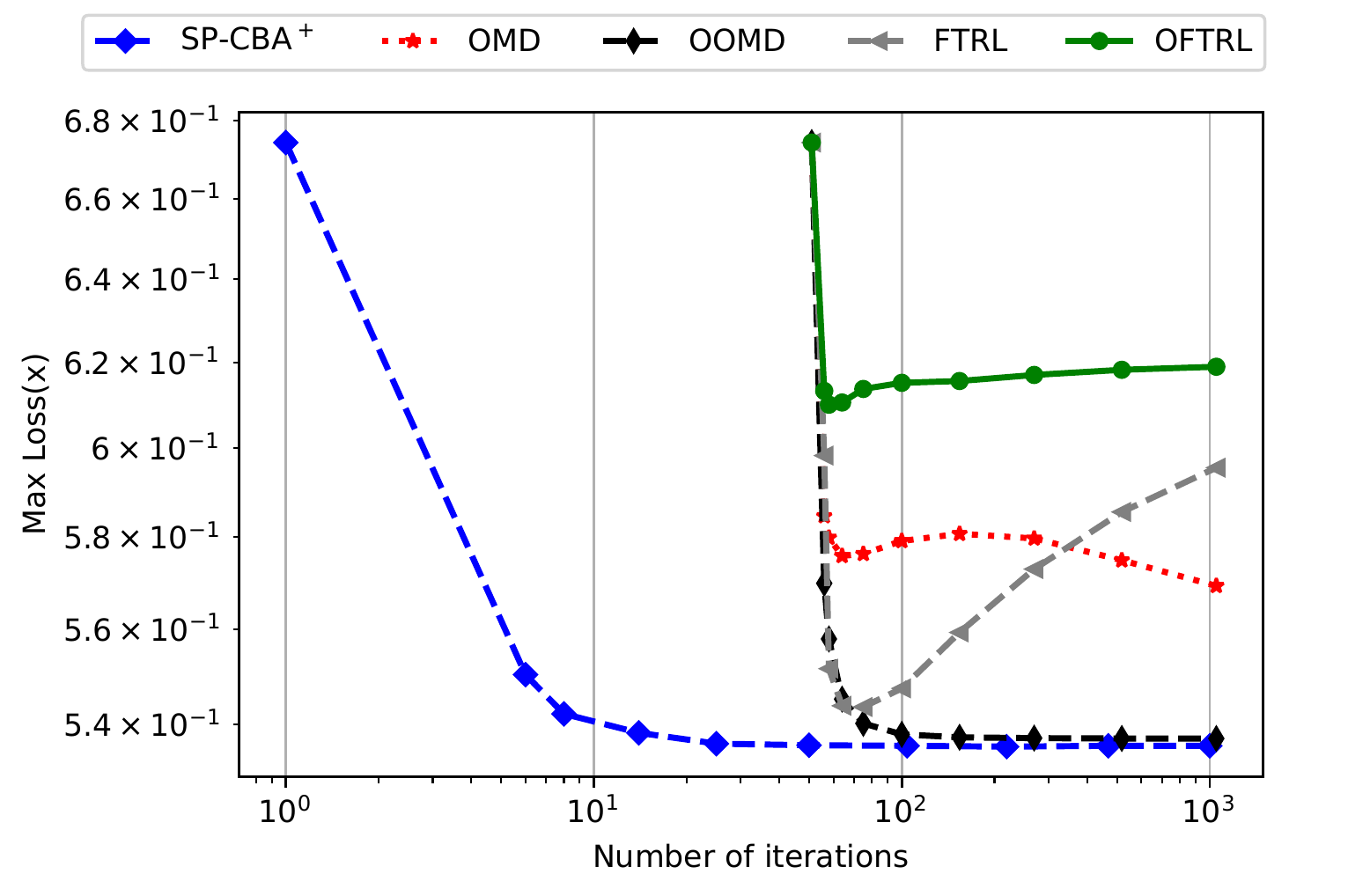}
         \caption{Australian}
          \label{fig:dro-period-australian-tuned}
  \end{subfigure}
   \begin{subfigure}{0.24\textwidth}
         \includegraphics[width=1.0\linewidth]{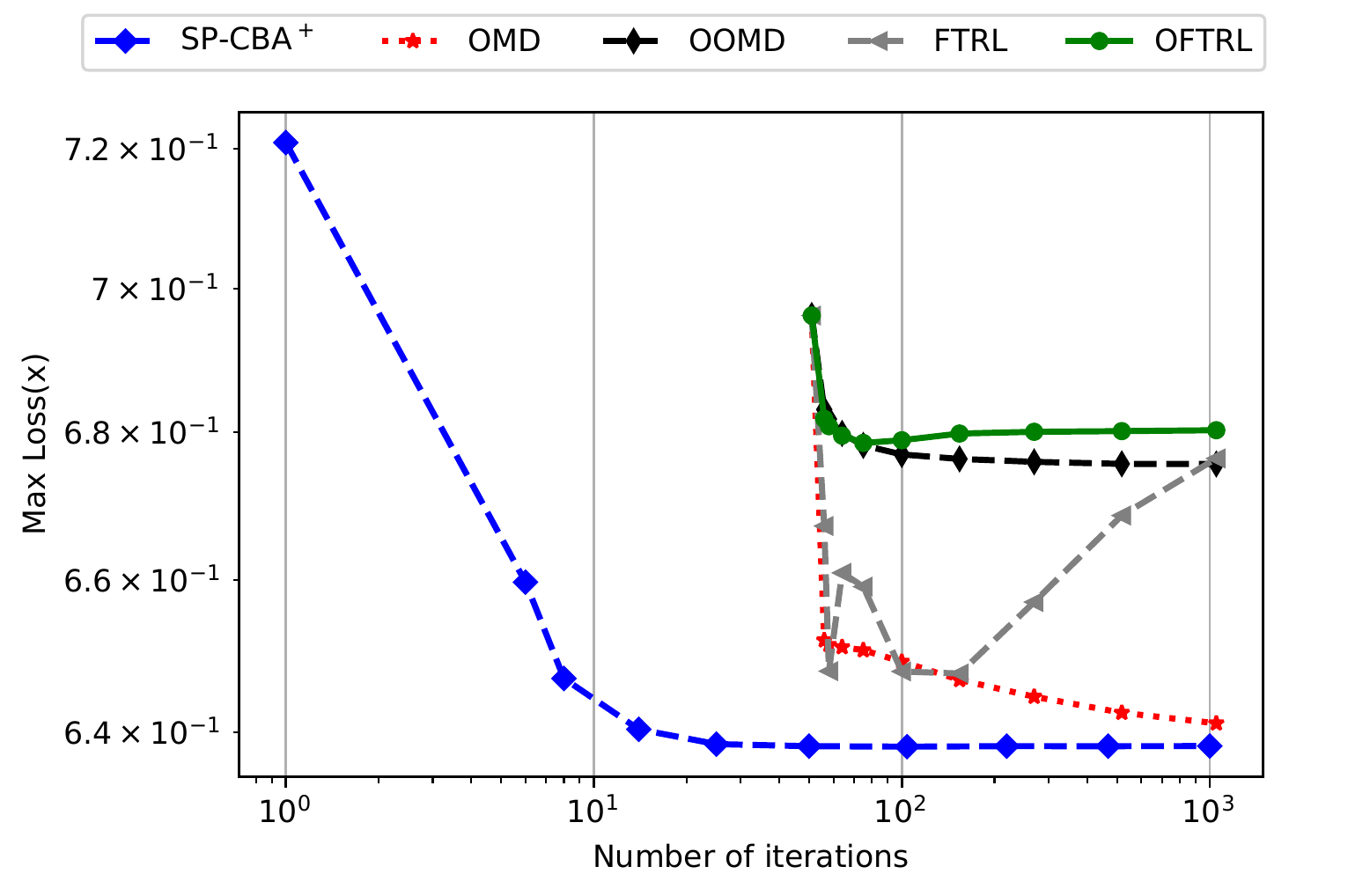}
         \caption{Splice}
          \label{fig:dro-period-splice-tuned}
  \end{subfigure}
\end{center}
  \caption{Comparisons of \cbap, OMD, FTRL, OOMD and OFTRL to compute a solution to the distributionally robust logistic regression problem~\eqref{eq:dro}, based on the number of iterations. The tuned step sizes are used in the first-order methods.}
  \label{fig:dro-period-tuned}
\end{figure}

\begin{figure}[hbt]
\begin{center}
   \begin{subfigure}{0.24\textwidth}
         \includegraphics[width=1.0\linewidth]{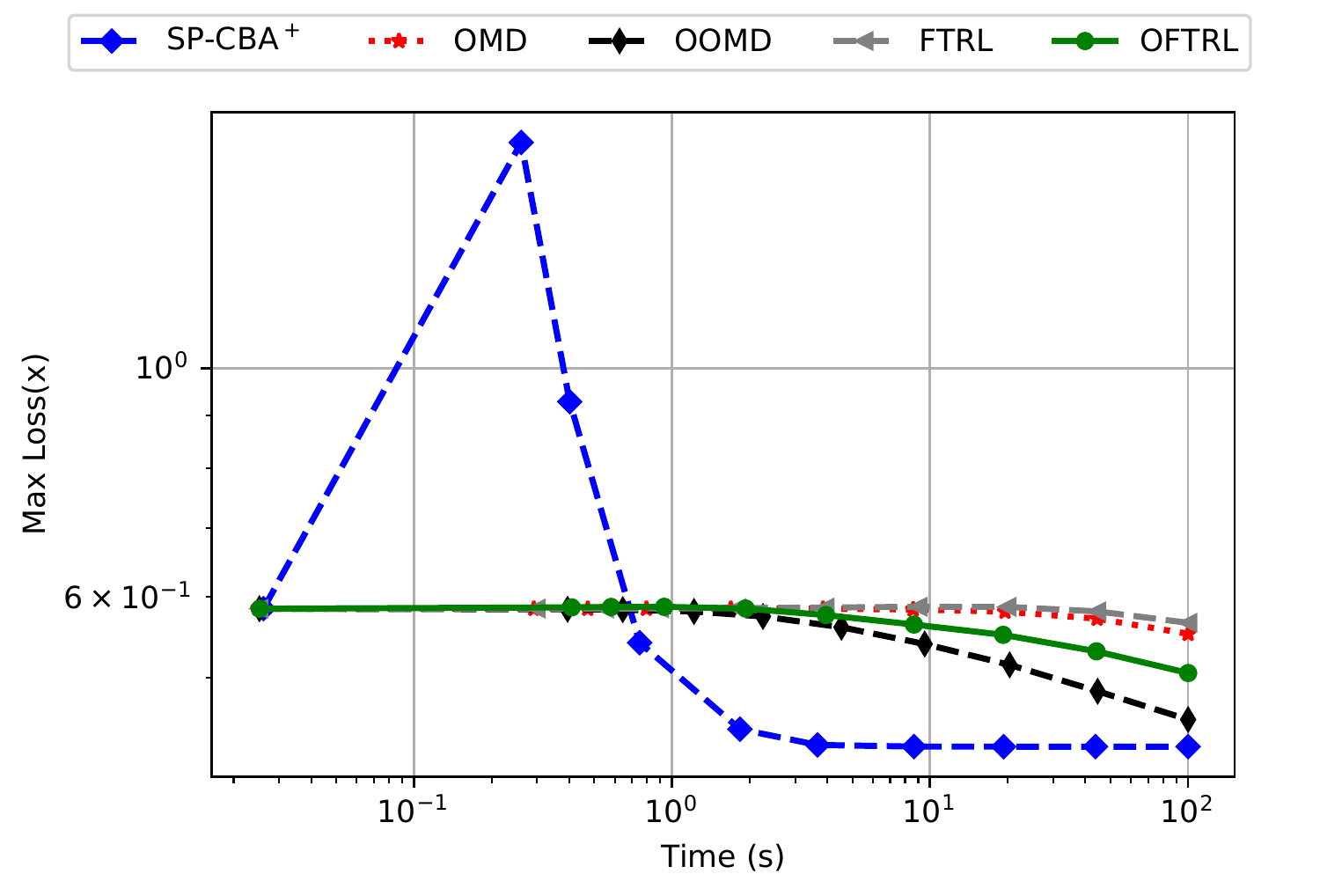}
         \caption{Uniform}
          \label{fig:dro-time-uniform-theoretical}
  \end{subfigure}
   \begin{subfigure}{0.24\textwidth}
         \includegraphics[width=1.0\linewidth]{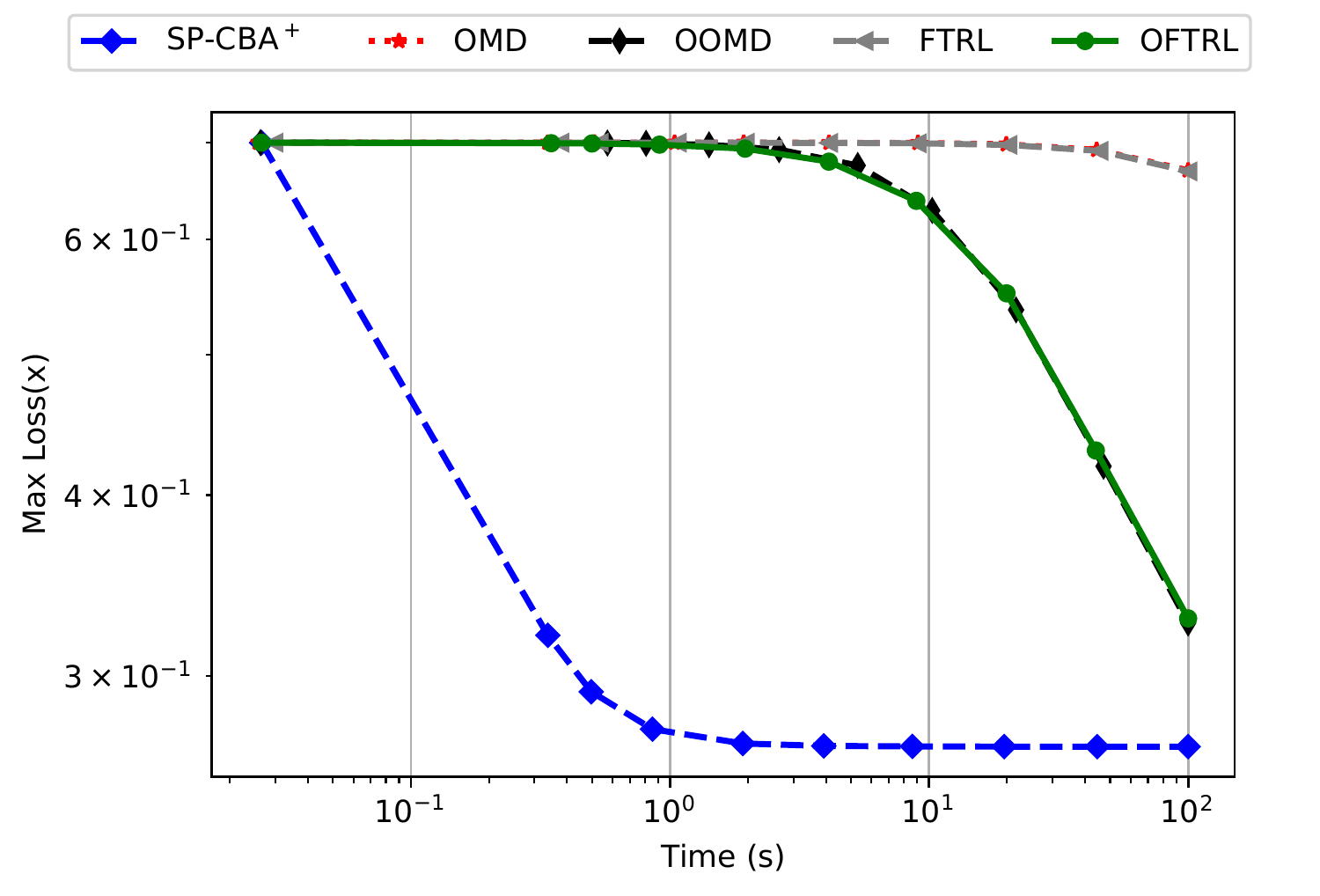}
         \caption{Normal}
          \label{fig:dro-time-normal-theoretical}
  \end{subfigure}
     \begin{subfigure}{0.24\textwidth}
         \includegraphics[width=1.0\linewidth]{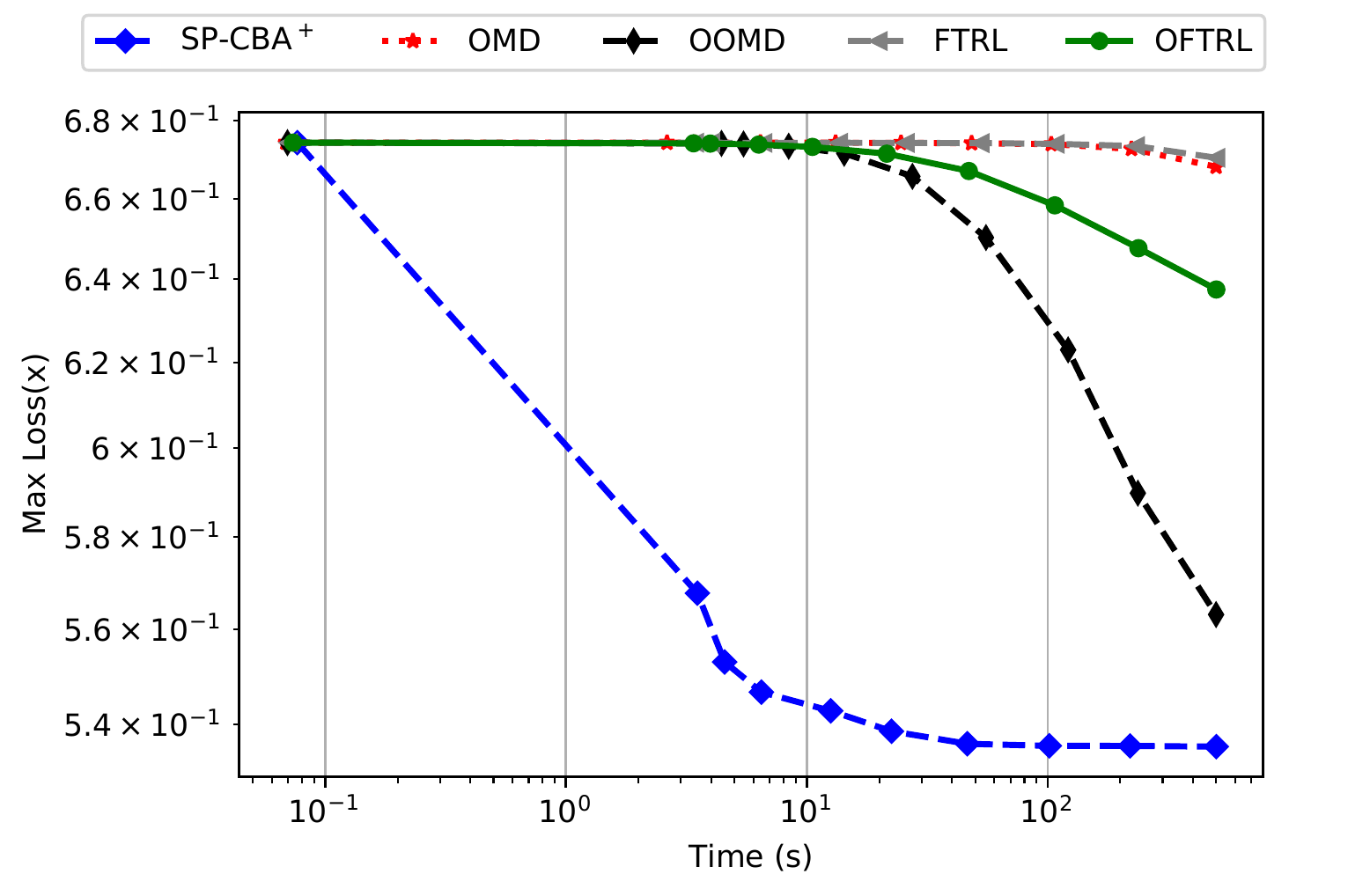}
         \caption{Australian}
          \label{fig:dro-time-australian-theoretical}
  \end{subfigure}
   \begin{subfigure}{0.24\textwidth}
         \includegraphics[width=1.0\linewidth]{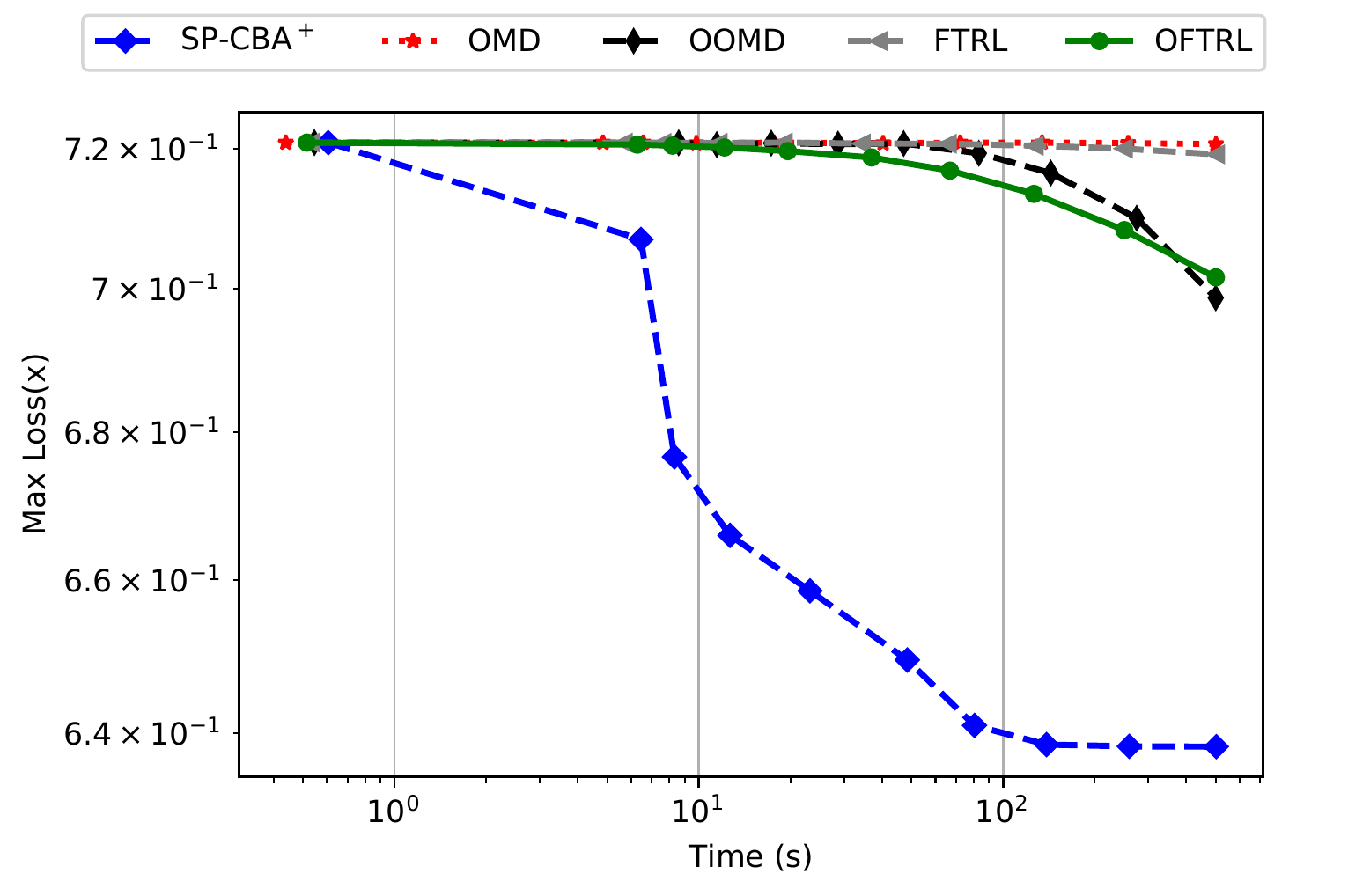}
         \caption{Splice}
          \label{fig:dro-time-splice-theoretical}
  \end{subfigure}
\end{center}
  \caption{Comparisons of \cbap, OMD, FTRL, OOMD and OFTRL to compute a solution to the distributionally robust logistic regression problem~\eqref{eq:dro}, based on the computation time. The theoretical choices of step sizes are used in the first-order methods.}
  \label{fig:dro-time-theoretical}
\end{figure}

\begin{figure}[hbt]
\begin{center}
   \begin{subfigure}{0.24\textwidth}
         \includegraphics[width=1.0\linewidth]{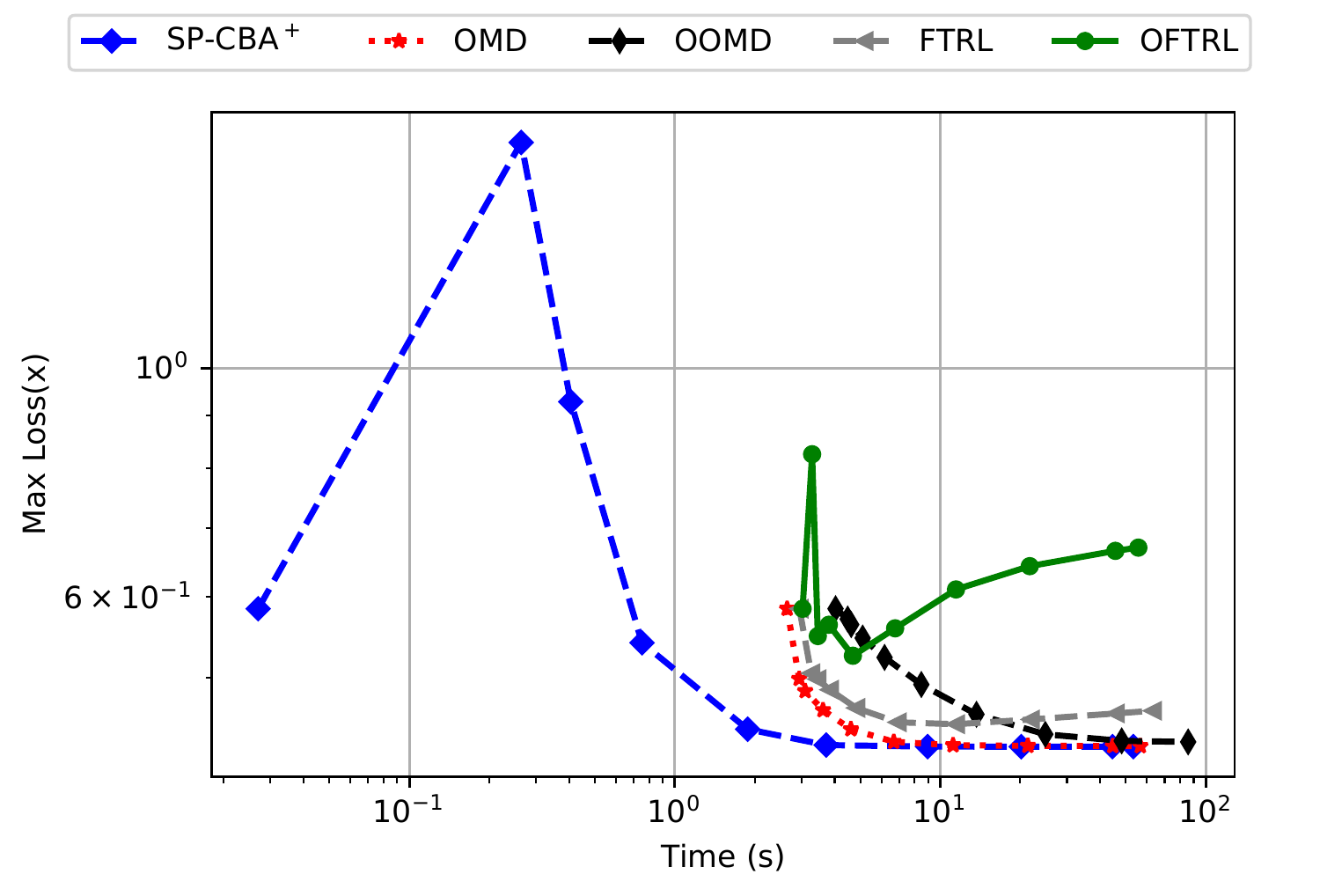}
         \caption{Uniform}
          \label{fig:dro-time-uniform-tuned}
  \end{subfigure}
   \begin{subfigure}{0.24\textwidth}
         \includegraphics[width=1.0\linewidth]{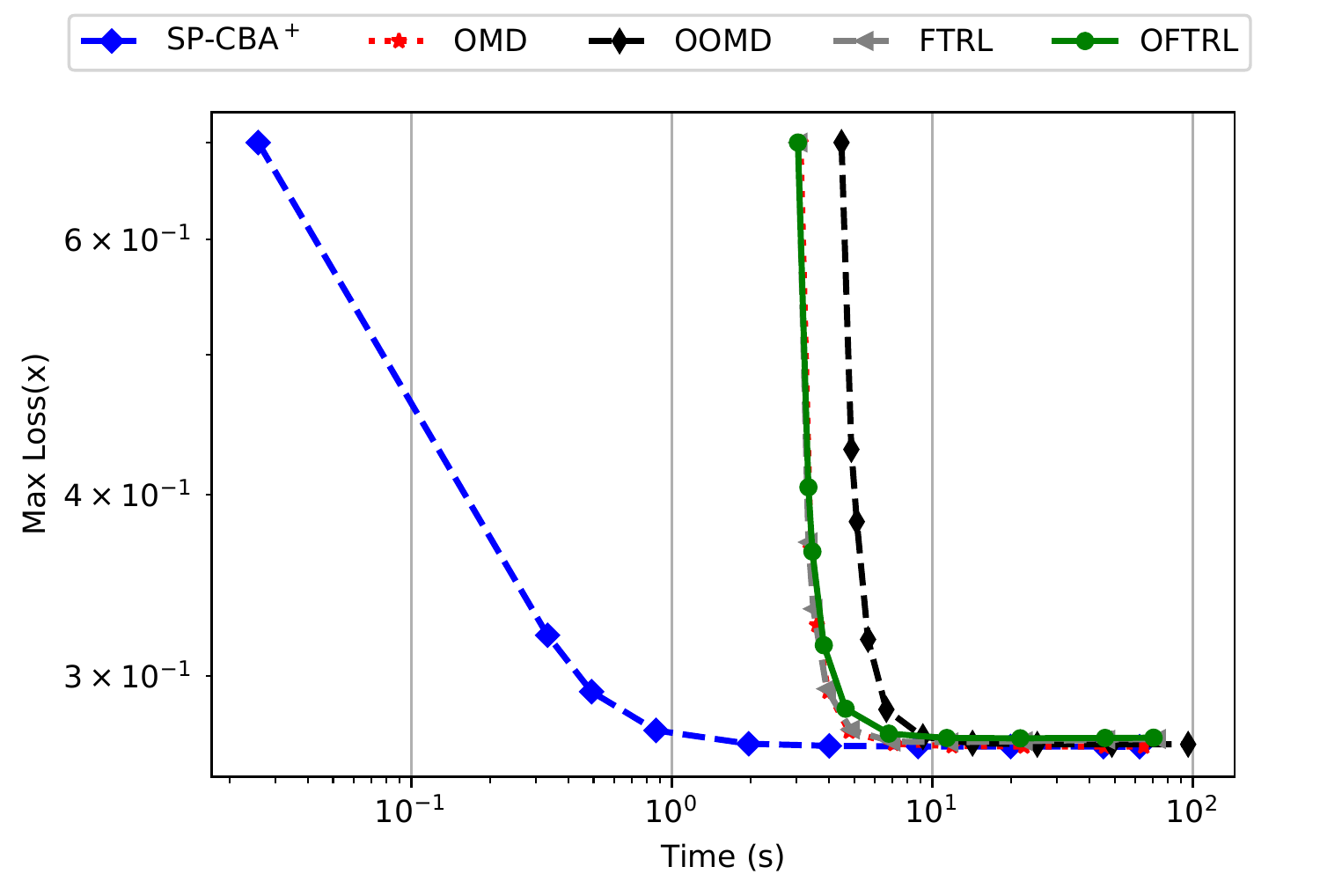}
         \caption{Normal}
          \label{fig:dro-time-normal-tuned}
  \end{subfigure}
     \begin{subfigure}{0.24\textwidth}
         \includegraphics[width=1.0\linewidth]{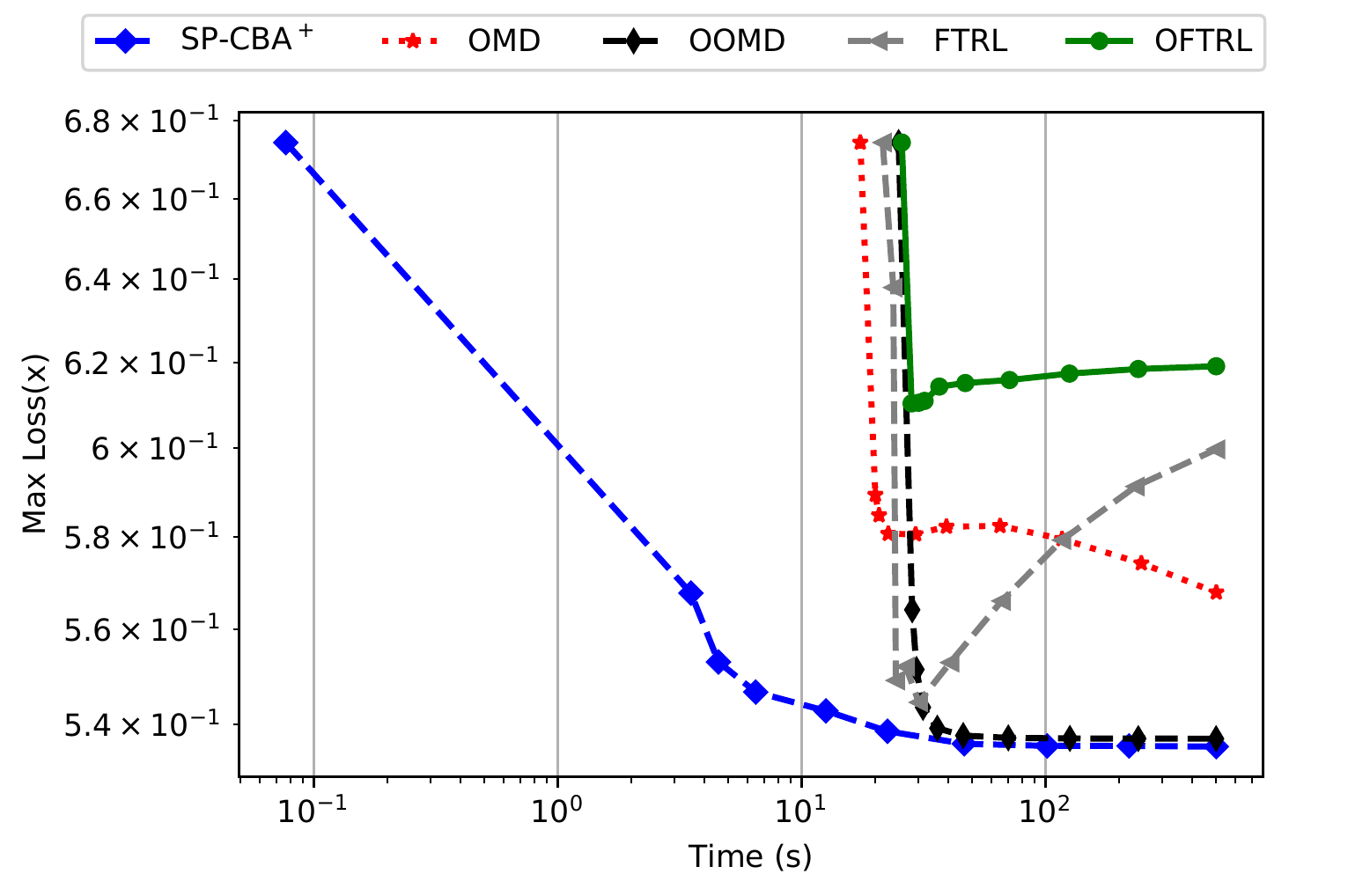}
         \caption{Australian}
          \label{fig:dro-time-australian-tuned}
  \end{subfigure}
   \begin{subfigure}{0.24\textwidth}
         \includegraphics[width=1.0\linewidth]{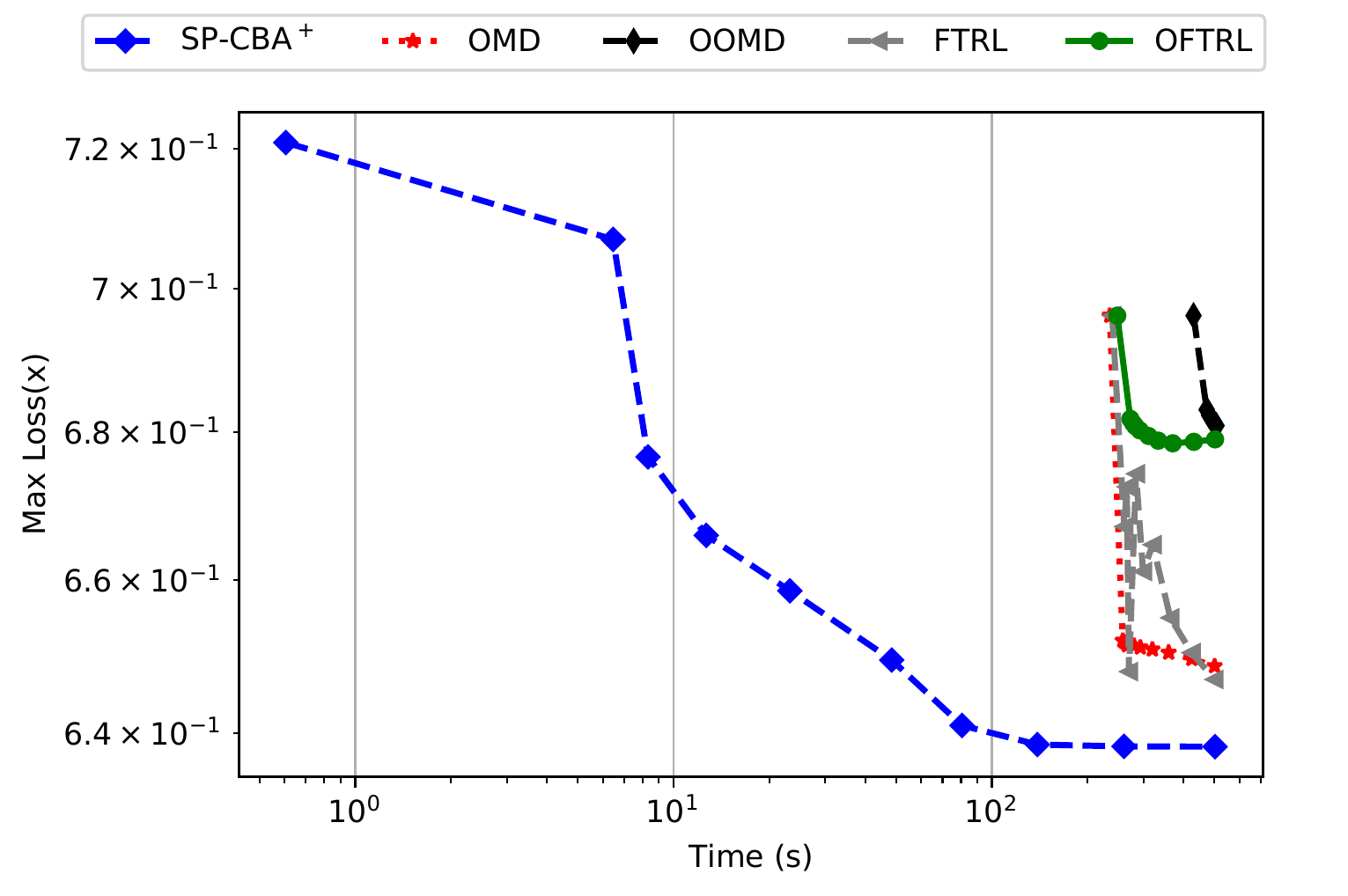}
         \caption{Splice}
          \label{fig:dro-time-splice-tuned}
  \end{subfigure}
\end{center}
  \caption{Comparisons of \cbap, OMD, FTRL, OOMD and OFTRL to compute a solution to the distributionally robust logistic regression problem~\eqref{eq:dro}, based on the computation time. The tuned step sizes are used in the first-order methods.}
  \label{fig:dro-time-tuned}
\end{figure}

\subsection{Markov decision processes}\label{sec:simu-mdp}
Markov Decision Processes (MDPs) are used as a modeling tool for sequential decision-making problems~\citep{Puterman}, and have found applications in game learning~\citep{mnih2013playing} and healthcare~\citep{grand2020robust,mdp-med-1,mdp-steimle}. In a finite MDP, the set of states is $[n]$ and there are $A$ actions. For each state-action pair $(s,a)$, there is an associated instantaneous reward $r_{sa}$ as well as a distribution $\bm{P}_{sa} \in \Delta(n)$ over the possible next states in $[n]$. We write $r_{\infty} = \max_{s,a} r_{sa}$ and we assume, without loss of generality, that $r_{sa} \geq 0, \forall \; (s,a) \in [n] \times [A]$. Given a discount factor $\lambda \in (0,1)$ and an initial probability distribution $\bm{p}_{0} \in \Delta(n)$, the goal of the decision-making is to maximize the infinite-horizon discounted cumulated reward. This leads to the following linear programming formulation~\citep{Puterman}:
\[ \min \{ (1-\lambda) \bm{p}_{0}^{\top}\bm{v} \; \vert \; v_{s} \geq r_{sa} + \lambda \bm{P}_{sa}^{\top}\bm{v}, \forall \; (s,a) \in [n] \times [A]\}\]
which can be rewritten as a saddle-point problem~\citep{jin2020efficiently}:
\begin{equation}\label{eq:mdp-saddle-point}
\min_{\bm{v} \in \mbR^{n},\| \bm{v} \|_{2} \leq \sqrt{n} r_{\infty}/(1-\lambda)} \max_{\mu \in \Delta(n \times A)} \; (1-\lambda) \bm{p}_{0}^{\top}\bm{v} + \sum_{s=1}^{n} \sum_{a=1}^{A} \mu_{sa} \left( r_{sa}+\lambda \bm{P}_{sa}^{\top}\bm{v} - v_{s}\right),
\end{equation}
where we add the constraint $\|\bm{v}\|_{2} \leq \sqrt{n}r_{\infty}/(1-\lambda)$ because \cbap{} requires a bounded decision set $\mcX$; this is a valid constraint for the optimal solution  $\bm{v}^{*} \in \mbR^{n}$ to the MDP problem, because $\bm{v}^{*}$ satisfies $0 \leq v^{*}_{s} \leq r_{\infty}/(1-\lambda), \forall \; s \in [n]$.

\paragraph{Experimental setup}
We test the performances of \spcbap{} for solving \eqref{eq:mdp-saddle-point} on random \textit{Garnet} MDPs (Generalized Average Reward Non-stationary Environment Test-bench, \citep{garnet,bhatnagar2007naturalgradient}), a class of random MDP instances widely used for benchmarking sequential decision-making algorithms. Garnet MDPs are parametrized by a branching factor $n_{b}$, which represents the proportion of reachable next states from each state-action pair $(s,a)$.  We choose $S=100,A=50,n_{b} = 50 \%,\lambda=0.95$. We average the performances of our algorithm over 10 random instances of Garnet MDPs, where the reward parameters are drawn at random uniformly in $[0,10]$. We compare \spcbap{} with the same first-order methods as in the previous section: \ref{alg:OMD}, \ref{alg:FTRL}, and their optimistic variants, with the same tuning method. The computation of the upper bounds $L_{v}$ and $L_{\mu}$ are detailed in Appendix \ref{app:constant-mdp}. We acknowledge that at the scale of the instances considered in this paper, MDPs can be solved efficiently using policy iteration. This algorithm is specialized to solving MDPs and differs greatly from \spcbap{} which is based on the repeated game framework; for this reason, we compare \spcbap{} with first-order methods that are widely applicable and that have been developed for larger MDP instances, e.g. online mirror descent for MDPs~\citep{jin2020efficiently}.

 \paragraph{Results and discussion}
 Similarly as in the two previous section, we note that \spcbap{} outperforms \ref{alg:OMD}, \ref{alg:FTRL}, as well as the optimistic variants, even after they are tuned. We note that in our tuning method, choosing the best step sizes after observing the first 10 iterations may lead to algorithms that choose step sizes that are too large and algorithms that fail to converge, such as \ref{alg:pred-OMD} in Figures \ref{fig:mdp-period-tuned}-\ref{fig:mdp-time-tuned}. Also, we note in Figure \ref{fig:mdp-time-tuned} that tuning the FOMs may require a lot of computation time. In contrast, \spcbap{} does not need to be tuned and all the computation time in \spcbap{} is used to make progress toward solving \eqref{eq:mdp-saddle-point}.
\begin{figure}[hbt]
\centering
   \begin{subfigure}{0.4\textwidth}
\centering
         \includegraphics[width=1.0\linewidth]{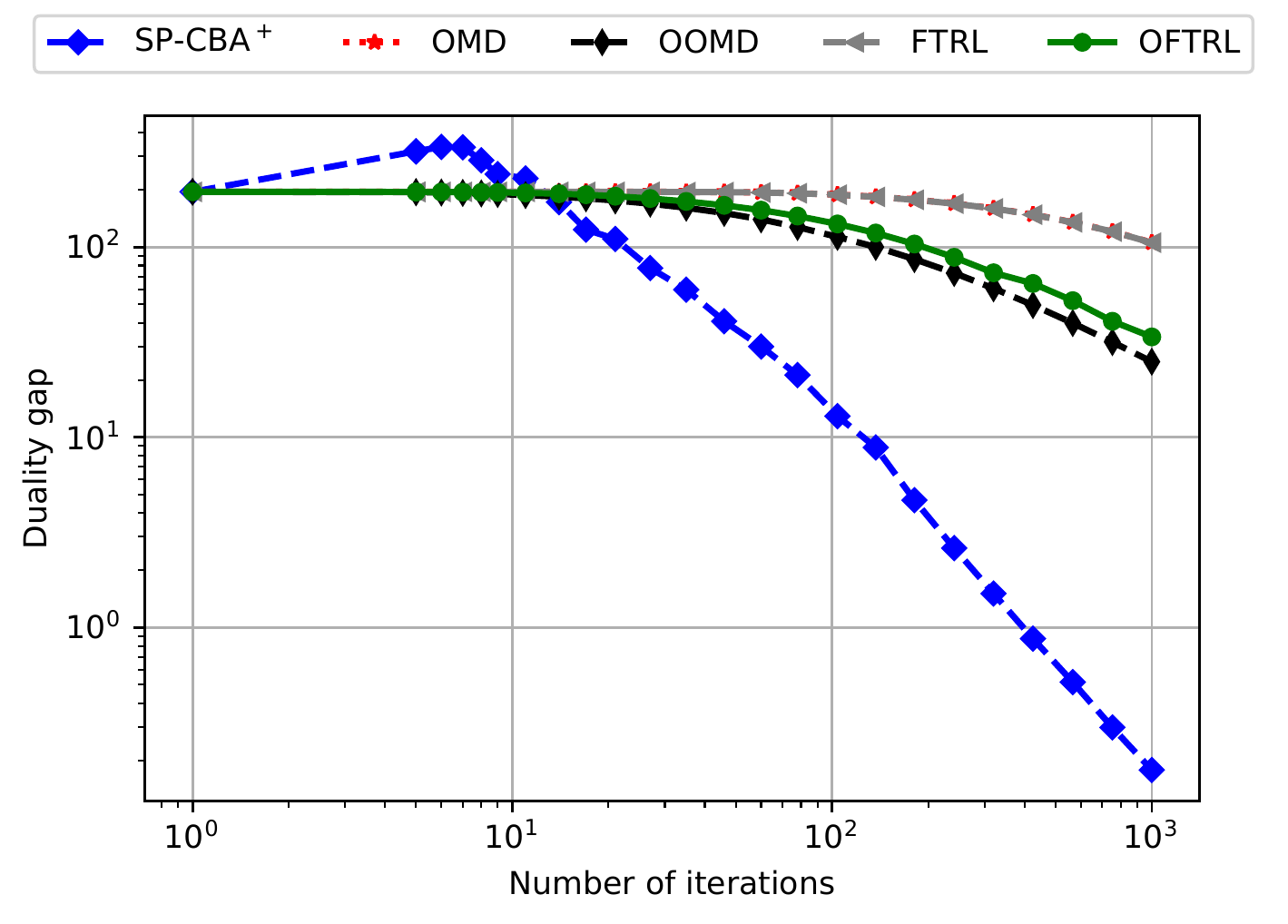}
         \caption{Number of iterations}
         \label{fig:mdp-period-theoretical}
  \end{subfigure}
     \begin{subfigure}{0.4\textwidth}
\centering
         \includegraphics[width=1.0\linewidth]{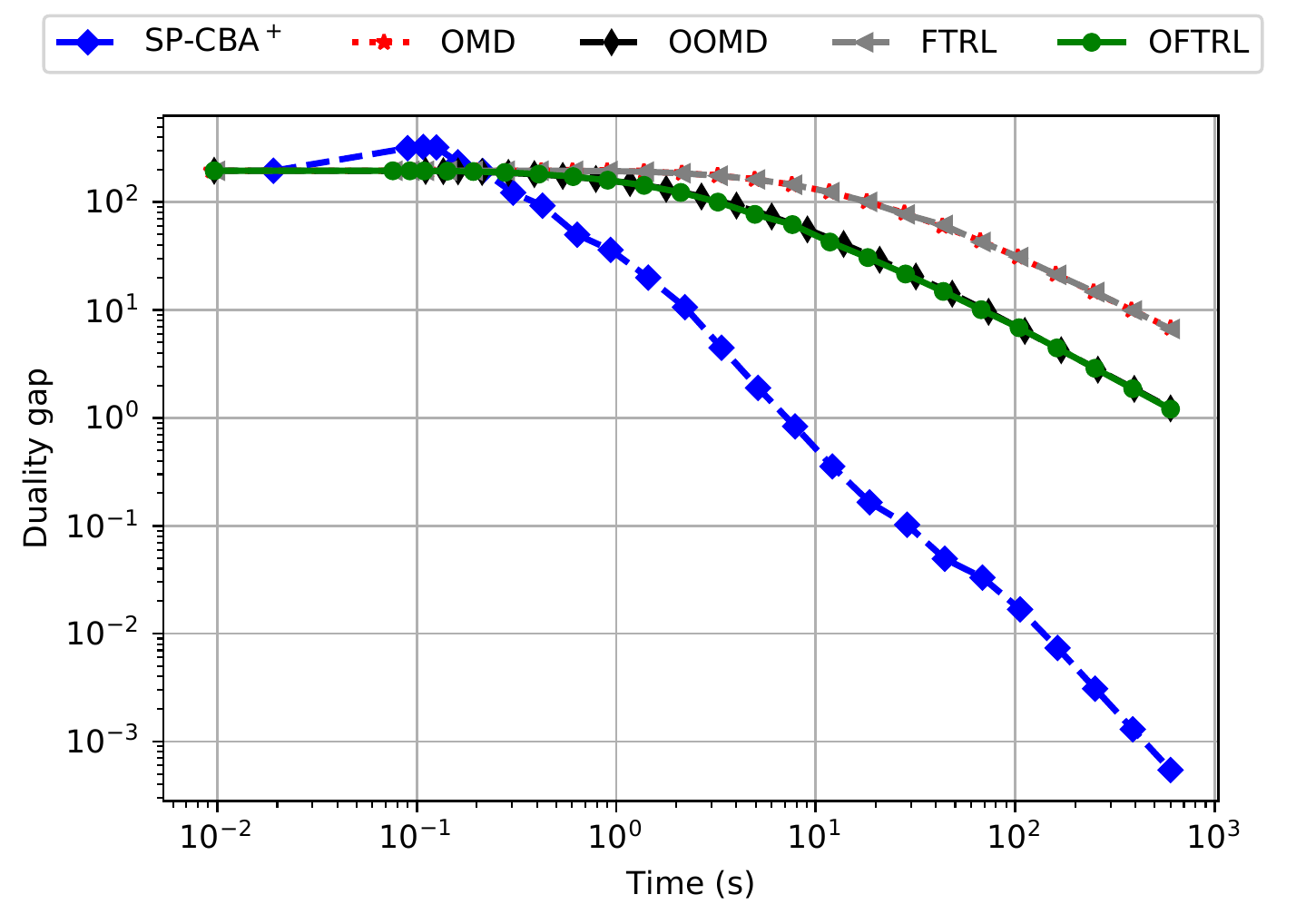}
         \caption{Computation time}
         \label{fig:mdp-time-theoretical}
  \end{subfigure}
  \caption{Comparisons of \cbap, OMD, FTRL, OOMD and OFTRL to compute a solution to the MDP problem~\eqref{eq:mdp-saddle-point}. The theoretical choices of step sizes are used in the first-order methods.}
  \label{fig:mdp-theoretical}
\end{figure}
\begin{figure}[hbt]
\centering
   \begin{subfigure}{0.4\textwidth}
\centering
         \includegraphics[width=1.0\linewidth]{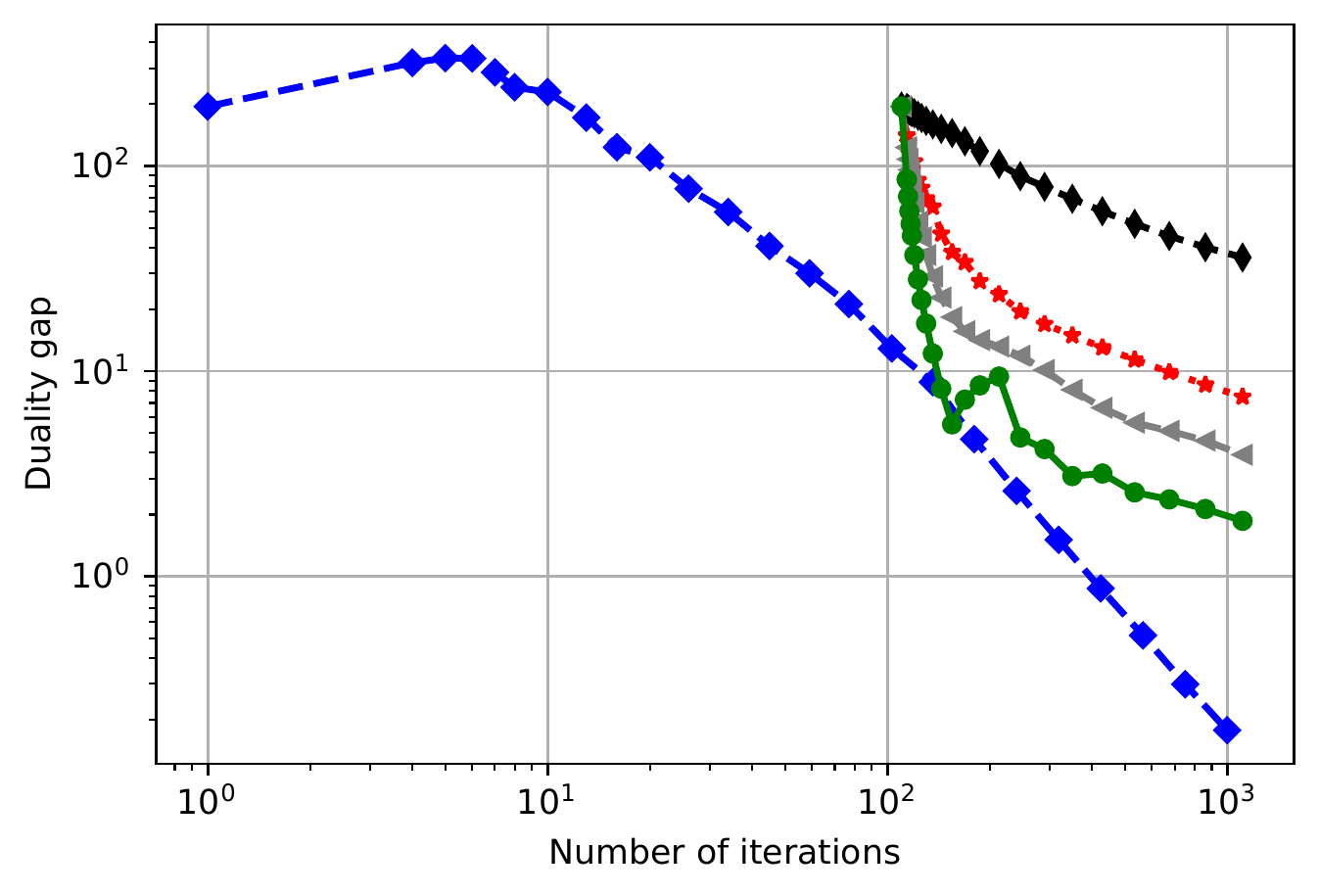}
         \caption{Number of iterations}
         \label{fig:mdp-period-tuned}
  \end{subfigure}
     \begin{subfigure}{0.4\textwidth}
\centering
         \includegraphics[width=1.0\linewidth]{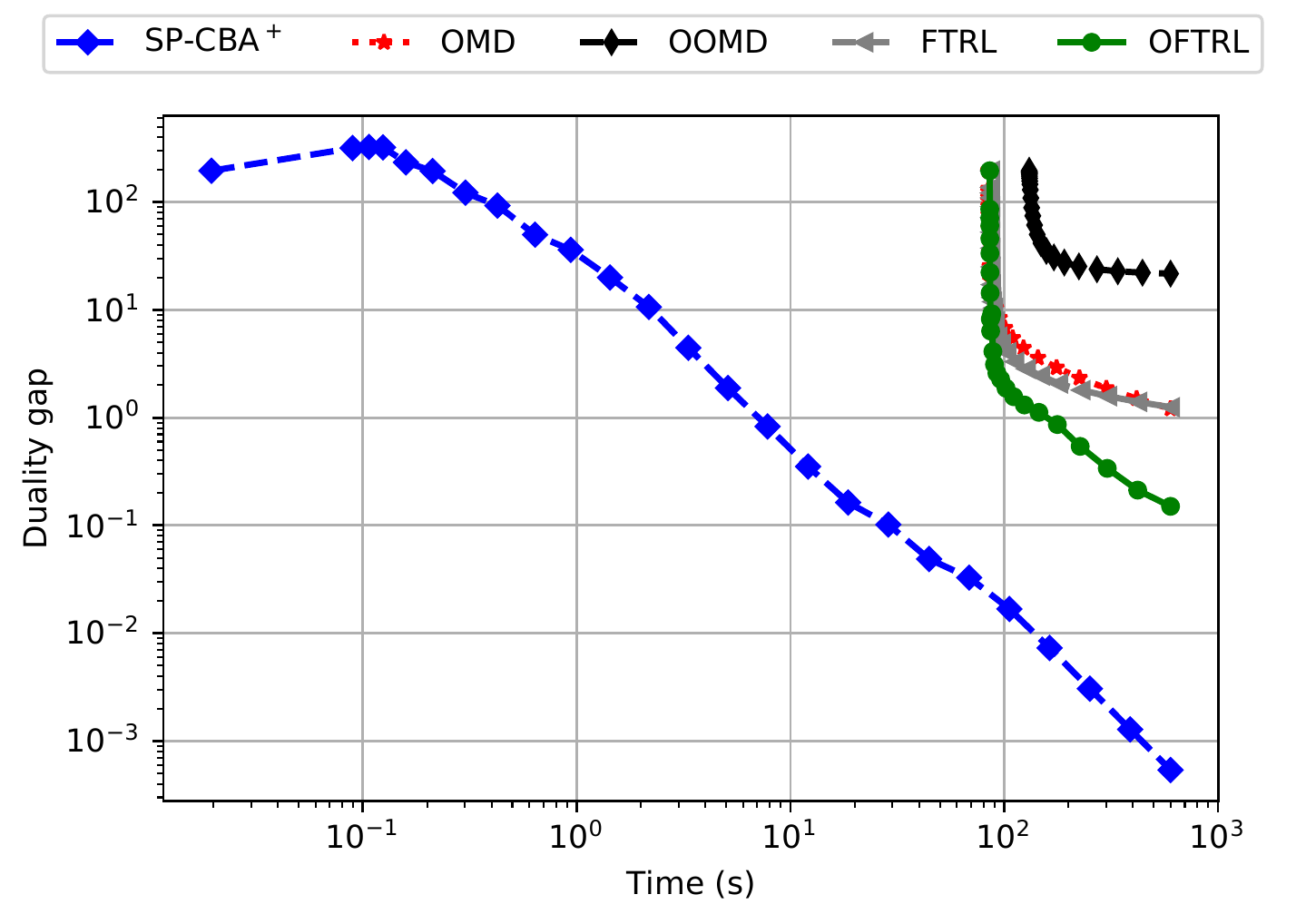}
         \caption{Computation time}
         \label{fig:mdp-time-tuned}
  \end{subfigure}
  \caption{Comparisons of \cbap, OMD, FTRL, OOMD and OFTRL to compute a solution to the MDP problem~\eqref{eq:mdp-saddle-point}. The tuned step sizes are used in the first-order methods.}
  \label{fig:mdp-tuned}
\end{figure}

\section{Conclusion}
We have proposed \spcbap, an algorithm based on Blackwell approachability for solving classical instances of saddle-point optimization. Our algorithm is  1) simple to implement for many practical decision sets,  2) completely parameter-free and does not attempt to learn any step sizes, and 3) competitive with, or even better than, state-of-the-art approaches with both theoretical and tuned parameters.  Interesting future directions of research include designing efficient implementations for other widespread decision sets (e.g., based on Kullback-Leibler divergence or $\phi$-divergence), extending \spcbap{} to unbounded decision sets, and developing novel accelerated versions based on strong convex-concavity or optimism.
{
\small 
\bibliographystyle{plainnat} 
\bibliography{ref}
}
\normalsize
\appendix

\section{Proof of Lemma \ref{lem:conic-opt}}\label{app:proof-lemma-conic-opt}
\begin{proof}[Proof of Lemma \ref{lem:conic-opt}]
\begin{enumerate}
\item The fact that $\bm{u} - \pi_{\mcC^{\circ}}(\bm{u}) = \pi_{\mcC}(\bm{u}) \in \mcC$,$\langle \bm{u} - \pi_{\mcC^{\circ}}(\bm{u}),\pi_{\mcC^{\circ}}(\bm{u}) \rangle = 0$ follows from Moreau's Decomposition Theorem \citep{combettes2013moreau}. The fact that $\| \bm{u} - \pi_{\mcC^{\circ}}(\bm{u}) \|_{2} \leq \| \bm{u} \|_{2}$ is a straightforward consequence of $\langle \bm{u} - \pi_{\mcC^{\circ}}(\bm{u}),\pi_{\mcC^{\circ}}(\bm{u}) \rangle = 0$.
\item For any $\bm{w} \in \mcC^{\circ} \cap B_{2}(1)$ we have
\[ \langle \bm{u},\bm{w} \rangle \leq  \langle \bm{u} - \pi_{\mcC}(\bm{u}),\bm{w} \rangle \leq \| \bm{w} \|_{2} \|  \bm{u} - \pi_{\mcC}(\bm{u}) \|_{2} \leq \|  \bm{u} - \pi_{\mcC}(\bm{u})\|_{2}.\]
Conversely,  since $\left( \bm{u} - \pi_{\mcC}(\bm{u}) \right)/\| \bm{u} - \pi_{\mcC}(\bm{u}) \|_{2} \in \mcC^{\circ}$, we have 
\[  \max_{\bm{w} \in \mcC^{\circ} \cap B_{2}(1) } \langle\bm{u},\bm{w} \rangle \geq \| \bm{u} - \pi_{\mcC}(\bm{u})\|_{2}.\]
This shows that 
\[ \max_{\bm{w} \in \mcC^{\circ} \cap B_{2}(1) } \langle\bm{u},\bm{w} \rangle = \| \bm{u} - \pi_{\mcC}(\bm{u})\|_{2} = d(\bm{u},\mcC).\]
\item For any $\bm{u} \in \mbR^{n+1}$,  by definition we have  $d(\bm{u},\mcC^{\circ}) = \| \bm{u} - \pi_{\mcC^{\circ}}(\bm{u})\|_{2}$. Now if $\bm{u} \in \mcC$ we have $ \pi_{\mcC^{\circ}}(\bm{u}) = 0$ so $d(\bm{u},\mcC^{\circ}) = \| \bm{u} \|_{2}$.
\item Let $\bm{u} \in \mcC$. Then $\bm{u} = \alpha (\kappa,\bm{x})$ for $\alpha \geq 0, \bm{x} \in \mcX$. We will show that $-\bm{u} \in \mcC^{\circ}$. We have
\begin{align*}
- \bm{u} \in \mcC^{\circ} &  \iff \langle - \bm{u} , \bm{u}' \rangle \leq 0, \forall \; \bm{u}' \in \mcC \\
& \iff \langle - \alpha (\kappa,\bm{x}), \alpha' (\kappa,\bm{x}') \rangle \leq 0, \forall \; \alpha' \geq 0, \forall \; \bm{x}' \in \mcX \\
& \iff   \kappa^{2} + \langle \bm{x},\bm{x}' \rangle \geq 0 \\
& \iff -  \langle \bm{x},\bm{x}' \rangle \leq \kappa^{2},
\end{align*}
and $- \langle \bm{x},\bm{x}' \rangle \leq \kappa^{2}$ is true by Cauchy-Schwartz and the definition of $\kappa = \max_{\bm{x} \in \mcX} \| \bm{x} \|_{2}$.
\item We start by proving \eqref{eq:partial-order-additivity}.  Let $ \bm{x},\bm{x}',\bm{y},\bm{y}' \in \mbR^{n+1}, $ and assume that $ \bm{x} \leq_{\mcC^{\circ}} \bm{y}, \bm{x}' \leq_{\mcC^{\circ}} \bm{y}'$. Then $\bm{y} - \bm{x} \in \mcC^{\circ},\bm{y}'- \bm{x}' \in \mcC^{\circ}$. Because $\mcC^{\circ}$ is a convex set, and a cone, we have $2 \cdot \left( \dfrac{\bm{y} - \bm{x}}{2} +  \dfrac{\bm{y}' - \bm{x}'}{2} \right) \in \mcC^{\circ}$. Therefore, $\bm{y} + \bm{y}' - \bm{x} - \bm{x}' \in \mcC^{\circ}$, i.e.,  $\bm{x} + \bm{x}' \leq_{\mcC^{\circ}} \bm{y} + \bm{y}'$.

We now prove \eqref{eq:partial-order-minus-x-prime}.  Let $\; \bm{x},\bm{y} \in \mbR^{n+1}, \bm{x}' \in \mcC^{\circ}$ and assume that $\bm{x} + \bm{x}'  \leq_{\mcC^{\circ}} \bm{y}$. Then by definition $\bm{y}  - \bm{x} -  \bm{x}'  \in \mcC^{\circ}$. Additionally, $\bm{x}' \in \mcC^{\circ}$ by assumption.  Since $\mcC^{\circ}$ is convex, and is a cone, $2 \cdot \left( \dfrac{\bm{y}  - \bm{x} -  \bm{x}'}{2}+ \dfrac{\bm{x}'}{2} \right) \in \mcC^{\circ}$, i.e., $\bm{y} - \bm{x} \in \mcC^{\circ}$. Therefore, $\bm{x} \leq_{\mcC^{\circ}} \bm{y}.$
\item Let $\bm{x},\bm{y} \in \mbR^{n+1}$ such that $\bm{x} \leq_{\mcC^{\circ}} \bm{y}$. Then $\bm{y} - \bm{x} \in \mcC^{\circ}$. We have $ d(\bm{y},\mcC^{\circ}) = \min_{\bm{z} \in \mcC^{\circ}} \| \bm{y} - \bm{z} \|_{2} \leq \| \bm{y} - ( \bm{y} - \bm{x}) \|_{2} = \| \bm{x} \|_{2}.$
\end{enumerate}
\end{proof}
\section{Proof of Theorem \ref{th:folk-final}}\label{app:proof-folk-final}
\begin{proof}[Proof of Theorem \ref{th:folk-final}]
  We prove the theorem for each part separately.
\begin{enumerate}
\item Let
\[  \bar{\bm{x}}_{T} = \frac{1}{S_{T}} \sum_{t=1}^{T} \omega_{t}\bm{x}_{t}, \bar{\bm{y}}_{T} = \frac{1}{S_{T}} \sum_{t=1}^{T} \omega_{t}\bm{y}_{t}.\] 
Since $F$ is convex-concave,  we first have
\[ \max_{\bm{y} \in \mcY}F(\bar{\bm{x}}_{T},\bm{y}) - \min_{\bm{x} \in \mcX} F(\bm{x},\bar{\bm{y}}_{T}) \leq \frac{1}{S_{T}} \left(  \max_{\bm{y} \in \mcY}  \sum_{t=1}^{T} \omega_{t}F(\bm{x}_{t},\bm{y}) - \min_{\bm{x} \in \mcX} \sum_{t=1}^{T} \omega_{t}F(\bm{x},\bm{y}_{t}) \right).\]
Now,
\begin{align*}
 \max_{\bm{y} \in \mcY}  \sum_{t=1}^{T} \omega_{t}F(\bm{x}_{t},\bm{y}) - \min_{\bm{x} \in \mcX} \sum_{t=1}^{T} \omega_{t}F(\bm{x},\bm{y}_{t})  & =  \max_{\bm{y} \in \mcY}  \sum_{t=1}^{T} \omega_{t}F(\bm{x}_{t},\bm{y}) -  \sum_{t=1}^{T} \omega_{t} F(\bm{x}_{t},\bm{y}_{t}) \\
 & + \sum_{t=1}^{T} \omega_{t} F(\bm{x}_{t},\bm{y}_{t})  - \min_{\bm{x} \in \mcX} \sum_{t=1}^{T} \omega_{t}F(\bm{x},\bm{y}_{t}) .
\end{align*}
Now since $F$ is convex-concave, we can  upper bound each pair of terms using the subgradient inequality:
\begin{align*}
\max_{\bm{y} \in \mcY}  \sum_{t=1}^{T} \omega_{t}F(\bm{x}_{t},\bm{y}) -  \sum_{t=1}^{T} \omega_{t} F(\bm{x}_{t},\bm{y}_{t}) &  \leq \max_{\bm{y} \in \mcY}  \omega_{t} \sum_{t=1}^{T} \langle \bm g_t ,\bm{y} \rangle - \sum_{t=1}^{T}  \omega_{t} \langle \bm g_t,\bm{y}_{t} \rangle, \\
 \sum_{t=1}^{T} \omega_{t} F(\bm{x}_{t},\bm{y}_{t})  - \min_{\bm{x} \in \mcX} \sum_{t=1}^{T} \omega_{t}F(\bm{x},\bm{y}_{t}) & \leq  \sum_{t=1}^{T}  \omega_{t} \langle \bm f_t, \bm x_t \rangle - \min_{\bm{x} \in \mcX} \sum_{t=1}^{T} \omega_{t} \langle \bm f_t, \bm x \rangle,
\end{align*}
where $\bm f_t \in \partial_{\bm x} F(\bm x_t,\bm y_t),\bm g_t \in \partial_{\bm y} F(\bm x_t,\bm y_t)$ (recall the repeated game framework presented  at the beginning of Section \ref{sec:game-setup}).
We recognize the right-hand side as the regrets in the repeated game framework.
For \cba{} with weights on both payoffs and decisions (Theorem \ref{th:cba-linear-averaging-both}), we have shown that
\begin{align*}
\frac{1}{S_{T}} \max_{\bm{y} \in \mcY} \sum_{t=1}^{T} \omega_{t} \langle \bm g_t ,\bm{y} \rangle - \sum_{t=1}^{T} \langle  \omega_{t} \bm g_t,\bm{y}_{t} \rangle & = O \left( \kappa L  \frac{\sqrt{\sum_{t=1}^{T} \omega_{t}^{2}}}{\sum_{t=1}^{T} \omega_{t} } \right), \\
\frac{1}{S_{T}} \sum_{t=1}^{T}  \omega_{t} \langle \bm f_t, \bm x_t \rangle - \min_{\bm{x} \in \mcX} \sum_{t=1}^{T}  \omega_{t} \langle \bm f_t, \bm x \rangle & = O \left( \kappa L \frac{\sqrt{\sum_{t=1}^{T} \omega_{t}^{2}}}{\sum_{t=1}^{T} \omega_{t} } \right).
\end{align*}
Recall that $\omega_{t}=t^p$.
Since $t \mapsto t^p$ is an increasing function, we have
\[ \int_{0}^{k} t^{p}dt \leq \sum_{t=1}^{k} t^{p} \leq \int_{0}^{k+1} t^{p}dt.\]
Therefore, we can conclude that
\begin{align*}
\sum_{t=1}^{T} \omega_{t}^{2} & = O \left( \frac{1}{p+1}T^{2p+1} \right), \\
\frac{1}{p+1}T^{p+1} & \leq \sum_{t=1}^{T} \omega_{t}.
\end{align*}
Overall, we obtain that 
\[ O \left( \kappa L \frac{\sqrt{\sum_{t=1}^{T} \omega_{t}^{2}}}{\sum_{t=1}^{T} \omega_{t} } \right) = O \left(  \frac{\kappa L \sqrt{p+1}}{\sqrt{T}}\right).\]
\item This proof is mostly similar to the first part. We have
\begin{align*}
\theta_{T} & = T^q, \\
 \frac{T^{q+1}}{q+1} & \leq \sum_{t=1}^{T} \theta_{t},\\
 \sqrt{\sum_{t=1}^{T} \omega^{2}_{t}} & = O \left( \frac{1}{\sqrt{p+1}}T^{p+1/2}\right), \\
 \omega_{T} & = T^{p}.
\end{align*}
Combining all this we obtain that an upper bound of
\[
O\left(\kappa L \frac{(q+1)T^q}{T^{q+1}} \frac{T^{p+1/2}}{\sqrt{p+1}T^{p}}\right)
\]
which is equal to 
$O \left(\frac{\kappa L(q+1)}{\sqrt{p+1}\sqrt{T}} \right).$
\end{enumerate}
\end{proof}
%\begin{remark}
%Note that we essentially reprove the folk theorem, except that we consider \textit{weighted} average for the decisions of both players.  This is because Theorem \ref{th:folk-final} uses linear averaging on decisions, whereas Theorem \ref{thm:folk theorem} is written with uniform averaging on decisions.
%\end{remark}
\section{Proof for Theorem \ref{th:folk-theorem-alternation}}\label{app:proof-folk-alternation}
\begin{proof}[Proof of Theorem \ref{th:folk-theorem-alternation}]
The proof of Theorem \ref{th:folk-theorem-alternation} is similar to the proof of Theorem \ref{th:folk-final} presented in Appendix \ref{app:proof-folk-final}. 
Let
\[  \bar{\bm{x}}_{T} = \frac{1}{S_{T}} \sum_{t=1}^{T} \theta_{t+1}\bm{x}_{t+1}, \bar{\bm{y}}_{T} = \frac{1}{S_{T}} \sum_{t=1}^{T} \theta_{t+1}\bm{y}_{t}.\] 
Since $F$ is convex-concave,  we first have
\[ \max_{\bm{y} \in \mcY}F(\bar{\bm{x}}_{T},\bm{y}) - \min_{\bm{x} \in \mcX} F(\bm{x},\bar{\bm{y}}_{T}) \leq \frac{1}{S_{T}} \left(  \max_{\bm{y} \in \mcY}  \sum_{t=1}^{T} \theta_{t+1}F(\bm{x}_{t+1},\bm{y}) - \min_{\bm{x} \in \mcX} \sum_{t=1}^{T} \theta_{t+1}F(\bm{x},\bm{y}_{t}) \right).\]
Now we can rewrite 
\[ \max_{\bm{y} \in \mcY}  \sum_{t=1}^{T} \theta_{t+1}F(\bm{x}_{t+1},\bm{y}) - \min_{\bm{x} \in \mcX} \sum_{t=1}^{T} \theta_{t+1}F(\bm{x},\bm{y}_{t}) \]
as 
\begin{align*}
 &  \max_{\bm{y} \in \mcY}  \sum_{t=1}^{T} \theta_{t+1}F(\bm{x}_{t+1},\bm{y}) -  \sum_{t=1}^{T} \theta_{t+1} F(\bm{x}_{t+1},\bm{y}_{t}) \\
 & + \sum_{t=1}^{T} \theta_{t+1} F(\bm{x}_{t+1},\bm{y}_{t})  - \sum_{t=1}^{T} \theta_{t+1} F(\bm{x}_{t},\bm{y}_{t}) \\ 
  & + \sum_{t=1}^{T} \theta_{t+1} F(\bm{x}_{t},\bm{y}_{t}) - \min_{\bm{x} \in \mcX} \sum_{t=1}^{T} \theta_{t+1}F(\bm{x},\bm{y}_{t}).
\end{align*}
Now since $F$ is convex-concave, we can use the following upper bound:
\begin{align*}
\max_{\bm{y} \in \mcY}  \sum_{t=1}^{T} \theta_{t+1}F(\bm{x}_{t+1},\bm{y}) -  \sum_{t=1}^{T} \theta_{t+1} F(\bm{x}_{t+1},\bm{y}_{t}) &  \leq \max_{\bm{y} \in \mcY}  \theta_{t+1} \sum_{t=1}^{T} \langle \bm g_t ,\bm{y} \rangle - \sum_{t=1}^{T}  \theta_{t+1} \langle \bm g_t,\bm{y}_{t} \rangle, \\
 \sum_{t=1}^{T} \theta_{t+1} F(\bm{x}_{t},\bm{y}_{t})  - \min_{\bm{x} \in \mcX} \sum_{t=1}^{T} \theta_{t+1}F(\bm{x},\bm{y}_{t}) & \leq  \sum_{t=1}^{T}  \theta_{t+1} \langle \bm f_t, \bm x_t \rangle - \min_{\bm{x} \in \mcX} \sum_{t=1}^{T} \theta_{t+1} \langle \bm f_t, \bm x \rangle,
\end{align*}
where $\bm f_t \in \partial_{\bm x} F(\bm x_{t},\bm y_t),\bm g_t \in \partial_{\bm y} F(\bm x_{t+1},\bm y_t)$.
This concludes the proof of Theorem \ref{th:folk-theorem-alternation}.
\end{proof} 
\section{Proof of Theorem \ref{th:alternation-works-bilinear-case}}\label{app:proof-alternation-is-improving}
We start with the following lemma. It shows that once a non-degenerate update has been chosen ($\bm{u}_{t} \neq \bm{0}$ for \cbap{} and $\pi_{\mcC}(\bm{u}_{t}) \neq \bm{0}$ for \cba), all the future updates are also non-degenerate.
\begin{lemma}\label{lem:non-degenerate-update}
\begin{enumerate}
\item Let $\left(\bm{u}_{t}\right)_{t \geq 1} \in \left(\mbR^{n+1}\right)^{\mbN}$ the sequence of payoffs generated by \cba{} with weights $\left(\omega_{t}\right)_{t \geq 1}$ on the payoffs.
Let $t \geq 1$. If $\pi_{\mcC}\left(\bm{u}_{t}\right) \neq \bm{0}$, then for all $t' \geq t$ we also have $\pi_{\mcC}\left(\bm{u}_{t'}\right) \neq \bm{0}$.
\item  Let $\left(\bm{u}_{t}\right)_{t \geq 1} \in \left(\mbR^{n+1}\right)^{\mbN}$ the sequence of payoffs generated by \cbap{} with weights $\left(\omega_{t}\right)_{t \geq 1}$ on the payoffs.
Let $t \geq 1$. If $\bm{u}_{t} \neq \bm{0}$, then for all $t' \geq t$ we also have $\bm{u}_{t'} \neq \bm{0}$.
\end{enumerate}  
\end{lemma}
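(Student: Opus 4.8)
The plan is to prove the one-step implication — that $\bm{u}_{t}\neq\bm{0}$ (resp.\ $\pi_{\mcC}(\bm{u}_{t})\neq\bm{0}$) forces the same at time $t+1$ — and then conclude by a trivial induction on $t'\geq t$. The unifying tool is a characterization of when the conic projection vanishes. By Moreau's decomposition (Statement \ref{lem:statement:u-minus-proj-in-C-polar} of Lemma \ref{lem:conic-opt}), every $\bm{w}\in\mbR^{n+1}$ satisfies $\bm{w}=\pi_{\mcC}(\bm{w})+\pi_{\mcC^{\circ}}(\bm{w})$, so $\pi_{\mcC}(\bm{w})=\bm{0}$ holds if and only if $\bm{w}=\pi_{\mcC^{\circ}}(\bm{w})\in\mcC^{\circ}$. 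Hence, to rule out $\pi_{\mcC}(\bm{w})=\bm{0}$ it suffices to exhibit a vector $\bm{z}\in\mcC$ with $\langle\bm{w},\bm{z}\rangle>0$, since by definition every element of $\mcC^{\circ}$ has nonpositive inner product with every element of $\mcC$.

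The second ingredient is the forcing identity \eqref{eq:blackwell-forcing} already established in the proof of Theorem \ref{th:cba-linear-averaging-both}: the decision $\bm{x}_{t+1}$ is built from $\pi_{\mcC}(\bm{u}_{t})$, and consequently the instantaneous payoff $\bm{v}_{t+1}=(\langle\bm{f}_{t+1},\bm{x}_{t+1}\rangle/\kappa,-\bm{f}_{t+1})$ satisfies $\langle\pi_{\mcC}(\bm{u}_{t}),\bm{v}_{t+1}\rangle=0$. This holds verbatim for both \cba{} and \cbap, since in each case $\bm{x}_{t+1}$ is obtained from $\pi_{\mcC}(\bm{u}_{t})$; for \cbap{} the update always projects, so $\bm{u}_{t}\in\mcC$ and $\pi_{\mcC}(\bm{u}_{t})=\bm{u}_{t}$.

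For \cba{} the aggregate payoff is updated without projection, $\bm{u}_{t+1}=\bm{u}_{t}+\omega_{t+1}\bm{v}_{t+1}$. Pairing with $\bm{z}:=\pi_{\mcC}(\bm{u}_{t})\in\mcC$ and using the forcing identity gives
\[
\langle\bm{u}_{t+1},\pi_{\mcC}(\bm{u}_{t})\rangle=\langle\bm{u}_{t},\pi_{\mcC}(\bm{u}_{t})\rangle+\omega_{t+1}\langle\bm{v}_{t+1},\pi_{\mcC}(\bm{u}_{t})\rangle=\|\pi_{\mcC}(\bm{u}_{t})\|_{2}^{2},
\]
where the last equality again invokes Moreau's decomposition, the cross term $\langle\pi_{\mcC}(\bm{u}_{t}),\pi_{\mcC^{\circ}}(\bm{u}_{t})\rangle$ vanishing. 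If $\pi_{\mcC}(\bm{u}_{t})\neq\bm{0}$ this quantity is strictly positive, so $\bm{u}_{t+1}\notin\mcC^{\circ}$ and $\pi_{\mcC}(\bm{u}_{t+1})\neq\bm{0}$, proving Part 1. For \cbap{} the update is $\bm{u}_{t+1}=\pi_{\mcC}(\bm{w})$ with $\bm{w}=\bm{u}_{t}+\omega_{t+1}\bm{v}_{t+1}$ and $\bm{u}_{t}\in\mcC$; pairing with $\bm{z}:=\bm{u}_{t}\in\mcC$ yields $\langle\bm{w},\bm{u}_{t}\rangle=\|\bm{u}_{t}\|_{2}^{2}$, which is strictly positive whenever $\bm{u}_{t}\neq\bm{0}$, so $\bm{w}\notin\mcC^{\circ}$ and $\bm{u}_{t+1}=\pi_{\mcC}(\bm{w})\neq\bm{0}$, proving Part 2.

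I do not anticipate a serious obstacle here; the only subtlety is bookkeeping. One must state the forcing identity with $\pi_{\mcC}(\bm{u}_{t})$ (not $\bm{u}_{t}$) in the \cba{} case, and correctly use Moreau's decomposition to kill the cross terms. Both are immediate consequences of Lemma \ref{lem:conic-opt}, so the argument reduces to the two short inner-product computations above.
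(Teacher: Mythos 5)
Your proof is correct and takes essentially the same route as the paper's own proof: both arguments rule out membership of the updated aggregate in $\mcC^{\circ}$ by pairing it with the witness $\pi_{\mcC}(\bm{u}_{t})$ (for \cba) or $\bm{u}_{t}$ (for \cbap), killing the cross term via the forcing identity \eqref{eq:blackwell-forcing} and Moreau's decomposition so that the inner product equals $\|\pi_{\mcC}(\bm{u}_{t})\|_{2}^{2}>0$ (resp.\ $\|\bm{u}_{t}\|_{2}^{2}>0$), and then conclude by induction. No gaps.
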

\begin{proof}[Proof of Lemma \ref{lem:non-degenerate-update}]
\begin{enumerate}
\item 
Assume that $\pi_{\mcC}\left(\bm{u}_{t}\right) \neq \bm{0}$. Let $\bm{\pi}_{t} = \left(\tilde{\pi}_{t},\hat{\bm{\pi}}_{t}\right)$ such that $\bm{\pi}_{t} = \pi_{\mcC}\left(\bm{u}_{t}\right)$. 
In this case, we can define $\bm{x}_{t+1} = \left(\kappa / \tilde{\pi}_{t} \right) \hat{\bm{\pi}}_{t}$. By definition of the updates in \cba, we have
\[ \bm{u}_{t+1} =\bm{u}_{t} + \omega_{t+1} \bm{v}_{t+1},\]
for $\bm{v}_{t+1} =  \left(\frac{\langle \bm{f}_{t+1},\bm{x}_{t+1} \rangle}{\kappa},-\bm{f}_{t+1}\right).$
We will show that $\bm{u}_{t+1} \notin \mcC^{\circ}$.
By definition, 
\[ \bm{u}_{t+1} \notin \mcC^{\circ} \iff \exists \; \bm{z} \in \mcC, \langle \bm{z},\bm{u}_{t+1}\rangle > 0.\]
If we take $\bm{z} = \bm{\pi}_{t}$, we have
\[ \langle \bm{\pi}_{t}, \bm{u}_{t+1} \rangle = \langle \bm{\pi}_{t}, \bm{u}_{t} + \omega_{t+1} \bm{v}_{t+1}\rangle = \langle \bm{\pi}_{t}, \bm{u}_{t} \rangle\]
since that by definition of $\bm{x}_{t+1}$, we have $\langle \bm{\pi}_{t} ,\bm{v}_{t+1} \rangle = 0$. Now 
\[ \langle \bm{\pi}_{t}, \bm{u}_{t} \rangle = \langle \pi_{\mcC}\left(\bm{u}_{t}\right), \pi_{\mcC}\left(\bm{u}_{t}\right)+\pi_{\mcC^{\circ}}\left(\bm{u}_{t}\right) \rangle = \langle \pi_{\mcC}\left(\bm{u}_{t}\right), \pi_{\mcC}\left(\bm{u}_{t}\right) \rangle   = \| \pi_{\mcC}\left(\bm{u}_{t}\right) \|_{2}^{2} >0.\]
This shows that $\bm{u}_{t+1} \notin \mcC^{\circ}$. Since $\bm{u}_{t+1} = \pi_{\mcC}(\bm{u}_{t+1}) + \pi_{\mcC^{\circ}}(\bm{u}_{t+1})$, this also shows that $\pi_{\mcC}(\bm{u}_{t+1}) \neq \bm{0}$. By induction, we have shown that $\pi_{\mcC}\left(\bm{u}_{t}\right) \neq \bm{0} \Rightarrow \pi_{\mcC}\left(\bm{u}_{t'}\right) \neq \bm{0}, \forall \; t' \geq t$.
\item The proof is very similar to the proof of the first statement.
Suppose that $\bm{u}_{t} \neq \bm{0}$. In this case, we can define $\bm{x}_{t+1} = \left(\kappa / \tilde{u}_{t} \right) \hat{\bm{u}}_{t}$. Note that by definition of the updates in \cbap, we have
\[ \bm{u}_{t+1} = \pi_{\mcC}\left(\bm{u}_{t} + \omega_{t+1} \bm{v}_{t+1}\right).\]
We will show that 
\[\bm{u}_{t} + \omega_{t+1} \bm{v}_{t+1} \notin \mcC^{\circ}.\]
By definition of $\mcC^{\circ}$,
\[ \bm{u}_{t} + \omega_{t+1} \bm{v}_{t+1} \notin \mcC \iff \exists \; \bm{z} \in \mcC, \langle \bm{z}, \bm{u}_{t} + \bm{v}_{t+1} \rangle >0.\]
For $\bm{z} = \bm{u}_{t}$, we obtain
\[\langle \bm{u}_{t},\bm{u}_{t} + \omega_{t+1}\bm{v}_{t+1} \rangle  
= \langle \bm{u}_{t},\bm{u}_{t} \rangle = \| \bm{u}_{t} \|_{2}^{2} > 0,\]
where
\[ \langle \bm{u}_{t} ,\bm{v}_{t+1} \rangle = 0\]
follows from the choice of $\bm{x}_{t+1}$ as in Blackwell approachability framework (see \eqref{eq:blackwell-forcing} in the proof of Theorem \ref{th:cba-linear-averaging-both} for more details.) 
Therefore, for any $t\geq 1$, we have $\bm{u}_{t} \neq \bm{0} \Rightarrow \bm{u}_{t+1} \neq \bm{0}$. This concludes the proof of Lemma \ref{lem:non-degenerate-update} by induction.
\end{enumerate}
\end{proof}
We are now ready to prove Theorem \ref{th:alternation-works-bilinear-case}.
\begin{proof}[Proof of Theorem \ref{th:alternation-works-bilinear-case}]
Assume that $\left(\bm{x},\bm{y} \right) \mapsto F\left(\bm{x},\bm{y} \right)$ is linear in $\bm{x}$.
\begin{enumerate}
\item We want to prove that
\begin{equation}\label{eq:x-t-plus-one-improving-objective}
F(\bm{x}_{t},\bm{y}_{t})  \geq F(\bm{x}_{t+1},\bm{y}_{t})+ \frac{\kappa}{ \omega_{t} \| \bm{u}_{t}\|_{\infty}} \| \bm{u}_{t} - \bm{u}_{t-1} \|_{2}^{2}.
\end{equation}
Let $t \geq 1$. Recall that
\begin{align*}
\bm{x}_{t} & = \chp_{\cbap}(\bm{u}_{t-1}),\\
\bm{x}_{t+1} & = \chp_{\cbap}(\bm{u}_{t}).
\end{align*}
We consider the following two cases.
\begin{enumerate}
\item Case 1: $\bm{u}_{t} = \bm{0}$.
From Lemma \ref{lem:non-degenerate-update}, we must have $\bm{u}_{t-1} = \bm{0}$, in which case $\bm{x}_{t+1} = \bm{x}_{t} = \bm{x}_{0}$ (the default value for the decisions of the first player), so that \eqref{eq:x-t-plus-one-improving-objective} holds because every term is $0$, with the convention that $0/0=0$ (in case $\bm{u}_{t+1}=\bm{0}$).
\item Case 2: $\bm{u}_{t} \neq \bm{0}$. 
We start from
\[\bm{u}_{t} = \pi_{\mcC}\left(\bm{u}_{t-1} + \omega_{t} \bm{v}_{t} \right)\]
with $\bm{v}_{t} = \left(\frac{\langle \bm{f}_{t},\bm{x}_{t} \rangle}{\kappa},-\bm{f}_{t}\right)$.
The optimality condition for the projection on $\mcC$ shows that
\[ \langle \bm{u}_{t} - \bm{u}_{t-1} - \omega_{t} \bm{v}_{t}, \bm{u}_{t} - \bm{z} \rangle \leq 0, \forall \; \bm{z} \in \mcC.\]
We can apply this with $\bm{z} = \bm{u}_{t-1}$ to obtain
\[ \langle  \bm{u}_{t} - \bm{u}_{t-1} - \omega_{t} \bm{v}_{t}, \bm{u}_{t} - \bm{u}_{t-1} \rangle \leq 0.\]
This shows that
\[ \| \bm{u}_{t} - \bm{u}_{t-1} \|_{2}^{2}  \leq \langle \omega_{t} \bm{v}_{t},
\bm{u}_{t} - \bm{u}_{t-1} \rangle.\]
Recall that by definition of $\bm{x}_{t}$ and $\bm{v}_{t}$, we have 
\[\langle \bm{v}_{t},
\bm{u}_{t-1} \rangle=0.\]
Recall that $\bm{u}_{t}=\alpha_{t+1}\left(\kappa,\bm{x}_{t+1}\right)$, with $\alpha_{t+1}>0$ because $\bm{u}_{t} \neq \bm{0}$.
This implies that 
\begin{align*}
\langle \omega_{t} \bm{v}_{t},
\bm{u}_{t} - \bm{u}_{t-1} \rangle
& = \langle \omega_{t} \bm{v}_{t},\bm{u}_{t} \rangle \\
& =  \omega_{t}\langle\left(\frac{\langle \bm{f}_{t},\bm{x}_{t} \rangle}{\kappa},-\bm{f}_{t}\right), \alpha_{t+1}\left(\kappa,\bm{x}_{t+1}\right) \rangle \\
& = \omega_{t}  \alpha_{t+1} \left( \langle \bm{f}_{t},\bm{x}_{t} \rangle - \langle \bm{f}_{t},\bm{x}_{t+1} \rangle \right).
\end{align*}
Overall, we have obtained
\[ \langle \bm{f}_{t},\bm{x}_{t} \rangle \geq \langle \bm{f}_{t},\bm{x}_{t+1} \rangle + \frac{1}{ \omega_{t} \alpha_{t+1}} \| \bm{u}_{t} - \bm{u}_{t-1} \|_{2}^{2}.\]
Recall that by definition, $\bm{u}_{t} = \alpha_{t+1}\left(\kappa,\bm{x}_{t+1}\right)$, with $\kappa = \max \{ \| \bm{x} \|_{2} \vert \bm{x} \in \mcX\}$. Therefore,
\[ \| \bm{u}_{t} \|_{\infty} = \alpha_{t+1} \max \{ \kappa, \| \bm{x}_{t+1}\|_{\infty}\} = \alpha_{t+1} \kappa,\]
where the last inequality follows from $\| \bm{x}_{t+1}\|_{\infty}\leq \| \bm{x}_{t+1}\|_{2} \leq \kappa$.
Overall, we have shown that \[ \langle \bm{f}_{t},\bm{x}_{t} \rangle \geq \langle \bm{f}_{t},\bm{x}_{t+1} \rangle + \frac{\kappa}{ \omega_{t} \| \bm{u}_{t} \|_{\infty}} \| \bm{u}_{t} - \bm{u}_{t-1} \|_{2}^{2}.\]
\end{enumerate}
Recall that in the repeated game framework with alternation, we have $\bm{f}_{t} = \partial_{\bm{x}} F(\bm{x}_{t},\bm{y}_{t})$. 
For an objective function that is linear in $\bm{x}$, we obtain 
\begin{align*}
\langle \bm{f}_{t},\bm{x}_{t+1} \rangle & = F(\bm{x}_{t+1},\bm{y}_{t}), \\
\langle \bm{f}_{t},\bm{x}_{t} \rangle & = F(\bm{x}_{t},\bm{y}_{t}).
\end{align*}
In this case, we have shown that 
\[ F(\bm{x}_{t},\bm{y}_{t})  \geq F(\bm{x}_{t+1},\bm{y}_{t})+ \frac{\kappa}{\omega_{t}  \| \bm{u}_{t} \|_{\infty}} \|  \bm{u}_{t} - \bm{u}_{t-1} \|_{2}^{2}.\]
This concludes the proof of the first statement of Theorem \ref{th:alternation-works-bilinear-case}.
\item The proof is identical to the first claim of this theorem. For the sake of conciseness, we omit it in this paper.
\end{enumerate}
\end{proof}
\section{Proofs for the efficient projections of Section \ref{sec:efficient-implementation}}\label{app:efficient-implementation}
\subsection{Proofs for the simplex}
\begin{proof}[Proof of Lemma \ref{lem:simplex-C-polar}] For $\mcX = \Delta(n)$, we can choose $\kappa = \max \{ \| \bm{x} \|_{2} \; \vert \; \bm{x} \in \mcX \} = 1$. Therefore, $\mcC = \{ \alpha \left(1,\bm{x}\right) \; \vert \; \bm{x} \in \Delta(n), \alpha \geq 0 \}$.
For $\bm{y}=(\tilde{y},\hat{\bm{y}}) \in \mbR^{n+1}$ we have
\begin{align*}
\bm{y} \in \mcC^{\circ} &  \iff \langle \bm{y},\bm{z} \rangle \leq 0, \forall \; \bm{z} \in \mcC \\
& \iff \langle (\tilde{y},\hat{\bm{y}}), \alpha(1,\bm{x}) \rangle \leq 0, \forall \; \bm{x} \in \Delta(n), \forall \; \alpha \geq 0 \\
& \iff \tilde{y} + \langle \hat{\bm{y}},\bm{x} \rangle \leq 0, \forall \; \bm{x} \in \Delta(n) \\
& \iff \max_{\bm{x} \in \Delta(n)} \langle \hat{\bm{y}},\bm{x} \rangle \leq - \tilde{y} \\
& \iff \max_{i=1,...,n} \hat{y}_{i} \leq - \tilde{y}.
\end{align*}
\end{proof}
\begin{proof}[Proof of Proposition \ref{prop:proj-simplex}]

Let us fix $\tilde{y} \in \mbR$ and let us first solve
\begin{equation}\label{eq:y-hat-of-y-tilde}
\begin{aligned}
\min \;  &  \| \hat{\bm{y}}-\hat{\bm{u}} \|_{2}^{2} \\
&\hat{\bm{y}} \in \mbR^{n}, \\
& \max_{i \in [n]} \hat{y}_{i} \leq - \tilde{y}.
\end{aligned}
\end{equation}
This is essentially the projection of $\hat{\bm{u}}$ on $(-\infty,-\tilde{y}]^{n}$. So a solution to \eqref{eq:y-hat-of-y-tilde} is $ \hat{y}_{i}(\tilde{y}) = \min \{-\tilde{y}, \hat{u}_{i} \}, \forall \; i=1,...,n.$
Note that in this case we have
$ \hat{\bm{u}} - \hat{\bm{y}}(\tilde{y}) = \left( \hat{\bm{u}} + \tilde{y}\bm{e} \right)^{+}.$
So overall the orthogonal projection on $\mcC^{\circ}$ boils down to the optimization of the function $\phi: \mbR \mapsto \mbR_{+}$ such that
\begin{equation}\label{eq:function-phi}
\phi: \tilde{y} \mapsto  (\tilde{y}-\tilde{u} )^{2} + \| \left( \hat{\bm{u}} + \tilde{y}\bm{e} \right)^{+} \|_{2}^{2}.
\end{equation}
In principle, we could use binary search with a doubling trick to compute a $\epsilon$-minimizer of the convex function $\phi$ in $O\left( \log(\epsilon^{-1}) \right)$ calls to $\phi$. However, it is possible to find a minimizer $\tilde{y}^{*}$ of $\phi$ using the following remark.

By construction,  we know that $\bm{u} - \pi_{\mcC^{\circ}}(\bm{u}) \in \mcC$. Here,  $\mcC = \textrm{cone}\left( \{1\} \times \Delta(n) \right)$,  and $\bm{u} - \pi_{\mcC^{\circ}}(\bm{u}) = \left( \tilde{u}-\tilde{y}^{*},\left( \hat{\bm{u}} + \tilde{y}^{*}\bm{e} \right)^{+} \right).$ 
We first check if $\tilde{u}=\tilde{y}^{*}$. This is the case if and only if $\bm{u} - \pi_{\mcC^{\circ}}(\bm{u}) = \bm{0}$, i.e., if and only if $\bm{u} \in \mcC^{\circ}$, which is straightforward to check using Lemma \ref{lem:simplex-C-polar}. Now if $\tilde{u}\neq\tilde{y}^{*}$, we must have $\tilde{u}>\tilde{y}^{*}$, by definition of $\mcC$. This also implies that
\[ \dfrac{\left( \hat{\bm{u}} + \tilde{y}^{*}\bm{e} \right)^{+} }{\tilde{u}-\tilde{y}^{*}} \in \Delta(n),\]
which in turns imply that 
\begin{equation}\label{eq:simple-eq-y-tilde}
 \tilde{y}^{*}+ \sum_{i=1}^{n} \max \{ \hat{u}_{i} + \tilde{y}^{*},0 \}=\tilde{u}.
\end{equation}
We can use \eqref{eq:simple-eq-y-tilde} to efficiently compute $\tilde{y}^{*}$ without using any binary search. In particular, we can sort the coefficients of $\hat{\bm{u}}$ in $O\left( n \log(n) \right)$ arithmetic operations, and use \eqref{eq:simple-eq-y-tilde} to find $\tilde{y}^{*}$.
\end{proof}
\subsection{Proofs for $\ell_{p}$-balls}
\begin{proof}[Proof of Lemma \ref{lem:ball-p-C-polar}]
Let us write $B_{p}(1) = \{ \bm{z} \in \mbR^{n} \; \vert \; \| \bm{z} \|_{p} \leq 1\}$. Here we consider $\mcX = B_{p}(1)$. Recall that $\kappa = \max \{ \| \bm{x} \|_{2} \; \vert \; \bm{x} \in \mcX\}$. Therefore, by definition, $\mcC = \{ \alpha \left(\kappa,\bm{x}\right) \; \vert \; \bm{x} \in B_{p}(1), \alpha \geq 0 \}$.

We first provide the reformulation for $\mcC$. Let $\bm{y}=(\tilde{y},\hat{\bm{y}}) \in \mcC$.
Then $\tilde{y} = \alpha \kappa, \hat{\bm{y}} = \alpha \bm{x}$ with $\alpha \geq 0$ and with $\bm{x}$ such that $\| \bm{x} \|_{p} \leq 1$. For $\alpha >0$ we have $\| \bm{x} \|_{p} \leq 1 \iff \| \alpha \bm{x} \|_{p} \leq \alpha \iff \| \hat{\bm{y}} \|_{p} \leq \tilde{y}/\kappa$.

We now provide the reformulation for $\mcC^{\circ}$.
 Note that for $\bm{y}=(\tilde{y},\hat{\bm{y}}) \in \mbR^{n+1}$ we have
\begin{align*}
\bm{y} \in \mcC^{\circ} &  \iff \langle \bm{y},\bm{z} \rangle \leq 0, \forall \; \bm{z} \in \mcC \\
& \iff \langle (\tilde{y},\hat{\bm{y}}), \alpha(\kappa,\bm{x}) \rangle \leq 0, \forall \; \bm{x} \in B_{p}(1), \forall \; \alpha \geq 0 \\
& \iff \kappa \tilde{y} + \langle \hat{\bm{y}},\bm{x} \rangle \leq 0, \forall \; \bm{x} \in B_{p}(1), \\
& \iff \max_{\bm{x} \in B_{p}(1),} \langle \hat{\bm{y}},\bm{x} \rangle \leq - \kappa \tilde{y} \\
& \iff \| \hat{\bm{y}} \|_{q} \leq - \kappa \tilde{y},
\end{align*}
since $\| \cdot \|_{q}$ is the dual norm of $\| \cdot \|_{p}$.
\end{proof}
\begin{proof}[Proof of Proposition \ref{prop:proj-ball-1-C-polar}] For $p=1$, we have $\| \cdot \|_{q} = \| \cdot \|_{\infty}$ and we can choose $\kappa = 1$.
Let us compute the projection of $\left(\tilde{u},\hat{\bm{u}} \right)$ on $\mcC^{\circ}$ using the reformulation of Lemma \ref{lem:ball-p-C-polar}:
\begin{equation}\label{eq:ortho-proj-onto-polar-norm-1}
\begin{aligned}
 \min & \; (\tilde{y}-\tilde{u})^{2} + \| \hat{\bm{y}}-\hat{\bm{u}} \|_{2}^{2} \\
& \; \tilde{y} \in \mbR, \hat{\bm{y}} \in \mbR^{n}, \\
& \| \hat{\bm{y}} \|_{\infty} \leq -\tilde{y}.
\end{aligned}
\end{equation}
For a fixed $\tilde{y} \in \mbR$, we want to compute $
 \min \{ \| \hat{\bm{y}}-\hat{\bm{u}} \|_{2}^{2} \; \vert \; \hat{\bm{y}} \in \mbR^{n}, \| \hat{\bm{y}} \|_{\infty} \leq -\tilde{y} \}.$
 This projection can be computed in closed-form as
$
 \hat{\bm{y}}^{*}(\tilde{y})  =  \min \{ - \tilde{y}, \max \{ \tilde{y}, \hat{\bm{u}} \} \}$,
since this is simply the orthogonal projection of $\hat{\bm{u}}$ onto the $\ell_{\infty}$ ball of radius $-\tilde{y}$.  Let us call $\phi: \mbR \mapsto \mbR$ such that
\[ \phi(\tilde{y}) = \left( \tilde{y} - \tilde{u} \right)^{2} + \| \hat{\bm{y}}^{*}(\tilde{y}) - \hat{\bm{u}} \|_{2}^{2}.\]
Note that $\hat{\bm{y}}^{*}(\tilde{y}) - \hat{\bm{u}} = \left( \hat{\bm{u}} + \tilde{y}\bm{e} \right)^{+}$, so we have
 \[ \phi: \tilde{y} \mapsto  \left( \tilde{y} - \tilde{u} \right)^{2}  + \| \left( \hat{\bm{u}} + \tilde{y}\bm{e} \right)^{+} \|_{2}^{2}.\]
Assume that we have ordered the coefficients of $\hat{\bm{u}} \in \mbR^{n}$ in decreasing order. This can be done in $O\left(n\log(n)\right)$ arithmetic operations. Then on each of the $n+1$ intervals $\mcI_{1}=\left(-\infty,-\hat{u}_{1}\right),\mcI_{2}=\left(-\hat{u}_{1},-\hat{u}_{2}\right),...,\mcI_{n+1} =\left(-\hat{u}_{n},+\infty\right)$, the map $\phi$ is a second order polynomial in $\tilde{y}$, with a non-negative coefficient in front of $\tilde{y}^2$. Therefore, for each $i \in [n+1]$, we can find a closed-form expression for the minimum $\phi^*_{i}$ of $\phi$ on $\mcI_{i}$, and the scalar $\tilde{y}^*_{i}$ attaining this minimum. We can then simply search for a global minimum of $\phi$ among the scalars
 \[ \{ \tilde{y}^*_{i} \; \vert \; i \in [n+1]\} \bigcup \{ - \hat{u}_{i} \; \vert \; i \in [n]\}.\]
 Once we have found $\tilde{y}^{*}$ the minimizer of $\phi$, we obtain the solution of $\pi_{\mcC^{\circ}}(\bm{u})$ as $\pi_{\mcC^{\circ}}(\bm{u})=\left(\tilde{y}^{*},\hat{\bm{y}}^{*}(\tilde{y})\right)$, and we can recover $\pi_{\mcC}(\bm{u})$ from $\pi_{\mcC}(\bm{u})=\bm{u}-\pi_{\mcC^{\circ}}(\bm{u})$.

Let us now focus on the case $p=\infty$.
We know that $\| \cdot \|_{1}$ and $\| \cdot \|_{\infty}$ are dual norms to each other.
 Therefore, from Lemma \ref{lem:ball-p-C-polar}, it is as computationally demanding to compute orthogonal projections onto $\mcC^{\circ}$ (when $p=1$) and onto $\mcC$ (when $p=\infty$). 
Therefore, the method described in the first part of this proof for computing $\pi_{\mcC^{\circ}}(\bm{u})$ for $p=1$  can be applied for computing $\pi_{\mcC}(\bm{u})$ in the case $p=\infty$.
\end{proof}
\begin{proof}[Proof of Proposition \ref{prop:proj-ball-2-C-polar}]
First, we check if $\bm{u} \in \mcC$, i.e., we check if $\| \hat{\bm{u}} \|_{2} \leq \tilde{u}$. If this is the case, then $\pi_{\mcC}(\bm{u}) = \bm{u}$.
Second, we check if $\bm{u} \in \mcC^{\circ}$, i.e., we check if $\| \hat{\bm{u}} \|_{2} \leq - \tilde{u}$. If this is the case, then $\pi_{\mcC}(\bm{u}) = \bm{0}$. Else, we have $\| \hat{\bm{u}} \|_{2} > \vert \tilde{u}\vert$, and we can provide a closed-form solution to $\pi_{\mcC}(\bm{u})$. 
Let us fix $\tilde{y} \in \mbR$ and define $\hat{\bm{y}}^{*}(\tilde{y})$ the vector attaining the minimum in 
$\min \{\| \hat{\bm{y}}-\hat{\bm{u}} \|_{2}^{2} \; \vert \;  \hat{\bm{y}} \in \mbR^{n}, \| \hat{\bm{y}} \|_{2} \leq \tilde{y}\}.$ With this notation, we want to find the minimum of $\phi: \mbR \mapsto \mbR$ defined as
\[ \phi(\tilde{y}) = \left( \tilde{y} - \tilde{u} \right)^{2} + \| \hat{\bm{y}}^{*}(\tilde{y}) - \hat{\bm{u}} \|_{2}^{2}.\]
 If $\tilde{y} \geq \| \hat{\bm{u}} \|_{2}$, then $\hat{\bm{y}}^{*}(\tilde{y})=\hat{\bm{u}}$. This shows that the minimum of $\phi$ on $[\| \hat{\bm{u}} \|_{2},+\infty)$ is attained at $\tilde{y}_{1}=\|\hat{\bm{u}} \|_{2}$, at a value of $\phi(\tilde{y}_{1})) = \left(\|\hat{\bm{u}} \|_{2}-\tilde{u}\right)^{2}$.
When $\tilde{y} \in [0,\|\hat{\bm{u}} \|_{2}]$, we have 
$\hat{\bm{y}}^{*}(\tilde{y}) = \left(\tilde{y}/ \| \hat{\bm{u}} \|_{2}\right) \hat{\bm{u}}$.
Note that here, $\tilde{y} \mapsto \hat{\bm{y}}^{*}(\tilde{y})$ is differentiable.  Therefore, $\phi:\tilde{y} \mapsto \left( \tilde{y} - \tilde{u} \right)^{2} + \| \hat{\bm{y}}^{*}(\tilde{y}) - \hat{\bm{u}} \|_{2}^{2}$ is also differentiable. The first-order optimality conditions yield a closed-form solution for the minimum of $\phi$ on $[0,\| \hat{\bm{u}} \|_{2}]$, with $\tilde{y}_{2}= \dfrac{\tilde{u} +\| \hat{\bm{u}} \|_{2}}{2}$. For this value of $\tilde{y}_{2}$, we obtain $\phi(\tilde{y}_{2}) = (1/2) \left(\|\hat{\bm{u}} \|_{2}-\tilde{u}\right)^{2}.$ Therefore, the global minimum of $\phi$ on $[0,+\infty)$ is attained at $\tilde{y}_{2}$, yielding
\[ \pi_{\mcC}(\bm{u})  = \left( \dfrac{\tilde{u} +\| \hat{\bm{u}} \|_{2}}{2}, \dfrac{\tilde{u} +\| \hat{\bm{u}} \|_{2}}{2}\frac{\hat{\bm{u}}}{\| \hat{\bm{u}} \|_{2}}\right).\]
\end{proof}
\subsection{Proofs for confidence regions in the simplex}
\begin{proof}[Proof of Proposition \ref{prop:proj-confidence-regions}]
We can write $\mcX = \bm{x}_{0} + \epsilon \tilde{B},$ where
$ \tilde{B} = \{ \bm{z} \in \mbR^{n} \; \vert \; \bm{z}^{\top}\bm{e}=0, \| \bm{z} \|_{2} \leq 1\}.$

Suppose we made a sequence of decisions $\bm{x}_{1}, ...,\bm{x}_{T}$, which can be written as $\bm{x}_{t} = \bm{x}_{0} + \epsilon \bm{z}_{t}$ for $\bm{z}_{t} \in \tilde{B}.$ Then it is clear that for any sequence of payoffs $\bm{f}_{1},...,\bm{f}_{T}$, we have
\begin{align}\label{eq:first-reformulation-regret}
\sum_{t=1}^{T} \omega_{t} \langle \bm{f}_{t},\bm{x}_{t} \rangle - \min_{\bm{x} \in \mcX}  \sum_{t=1}^{T} \omega_{t} \langle \bm{f}_{t},\bm{x} \rangle = \epsilon_{x} \left( \sum_{t=1}^{T} \omega_{t} \langle \bm{f}_{t},\bm{z}_{t} \rangle - \min_{\bm{z} \in \tilde{B}}  \sum_{t=1}^{T}  \omega_{t}\langle \bm{f}_{t},\bm{z} \rangle \right).
\end{align}
Therefore,  if we run \cbap{} on the set $\tilde{B}$ to obtain $O \left( \sqrt{T} \right)$ growth of the right-hand side of \eqref{eq:first-reformulation-regret},  we obtain a no-regret algorithm for $\mcX$. We now show how to run \cbap{} for the set $\tilde{B}$.
Let $\mcV = \{ \bm{v} \in \mbR^{n} \; \vert \; \bm{v}^{\top}\bm{e}=0\}.$
 We use the following orthonormal basis of $\mcV$:
 let $\bm{v}_{1}, ..., \bm{v}_{n-1} \in \mbR^{n}$ be the vectors
$\bm{v}_{i} = \sqrt{i/(i+1)} \left(1/i, ..., 1/i, -1, 0,...,0 \right), \forall \; i=1,...,n-1,$ where the component $1/i$ is repeated $i$ times.
The vectors $\bm{v}_{1},...,\bm{v}_{n-1}$ are orthonormal and constitute a basis of $\mcV$ \citep{egozcue2003isometric}.  Writing $\bm{V} = \left(\bm{v}_{1},...,\bm{v}_{n-1} \right) \in \mbR^{n \times (n-1)}$,  and noting that $\bm{V}^{\top}\bm{V} = \bm{I}$, we can write $\tilde{B} = \{ \bm{Vs} \; \vert \; \bm{s} \in \mbR^{n-1},\| \bm{s} \|_{2} \leq 1 \}.$
Now, if $\bm{x} = \bm{x}_{0} + \epsilon_{x} \bm{z}_{t}$ with $\bm{z}_{t} \in \mcV$, we have $\bm{z}_{t} = \bm{Vs}_{t}$,  for $\bm{s}_{t} \in \mbR^{n-1}$ and $\| \bm{s} \|_{2} \leq 1$.  Finally, $\sum_{t=1}^{T} \omega_{t} \langle \bm{f}_{t},\bm{x}_{t} \rangle - \min_{\bm{x} \in \mcX}  \sum_{t=1}^{T} \omega_{t} \langle \bm{f}_{t},\bm{x} \rangle$ is equal to 
\begin{equation}\label{eq:second-reformulation-regret}\epsilon_{x} \left( \sum_{t=1}^{T} \omega_{t} \langle \bm{V}^{\top} \bm{f}_{t},\bm{s}_{t} \rangle - \min_{\bm{s} \in \mbR^{n-1},\|\bm{s}\|_{2} \leq 1}  \sum_{t=1}^{T} \omega_{t} \langle \bm{V}^{\top} \bm{f}_{t},\bm{s} \rangle \right).
\end{equation}
%\begin{align}\label{eq:second-reformulation-regret}
%\sum_{t=1}^{T} \omega_{t} \langle \bm{f}_{t},\bm{x}_{t} \rangle - \min_{\bm{x} \in \mcX}  \sum_{t=1}^{T} \omega_{t} \langle \bm{f}_{t},\bm{x} \rangle =  \epsilon_{x} \left( \sum_{t=1}^{T} \omega_{t} \langle \bm{V}^{\top} \bm{f}_{t},\bm{s}_{t} \rangle - \min_{\bm{s} \in \mbR^{n-1},\|\bm{s}\|_{2} \leq 1}  \sum_{t=1}^{T} \omega_{t} \langle \bm{V}^{\top} \bm{f}_{t},\bm{s} \rangle \right).
%\end{align}
Therefore,  to obtain a regret minimizer for \eqref{eq:second-reformulation-regret} with observed payoffs $\left( \bm{f} \right)_{t \geq 1}$, we can run \cbap{} on the right-hand side,  where the decision set is an $\ell_{2}$ ball and the sequence of observed payoffs is $\left( \bm{V}^{\top}\bm{f}_{t} \right)_{t \geq 1}$.  
In the previous section we showed how to efficiently instantiate \cbap{} in this setting (see Proposition \ref{prop:proj-ball-2-C-polar}).
\end{proof}
\begin{remark}
In this section we have highlighted a sequence of reformulations of the regret, from \eqref{eq:first-reformulation-regret} to \eqref{eq:second-reformulation-regret}.  We essentially showed how to instantiate \cbap{} for settings where the decision set $\mcX$ is the intersection of an $\ell_{2}$ ball with a hyperplane for which we have an orthonormal basis.
\end{remark}
\section{Details on OMD, FTRL and optimistic variants}\label{app:details-omd}
\subsection{Algorithms}
For solving our instances of distributionally robust optimization, we compare \spcbap{} with the following four state-of-the-art algorithms: at iteration $t \geq 1$, for a step size $\eta_{t}>0$, the updates are:
\begin{enumerate}
\item Follow-The-Regularized-Leader (FTRL) \citep{abernethy2009competing,mcmahan2011follow}:
\begin{equation}\label{alg:FTRL}\tag{FTRL}
\bm{x}_{t+1} \in \arg \min_{ \bm{x} \in \mcX } \; \langle \sum_{\tau=1}^{t} \bm{f}_{\tau},\bm{x}\rangle + \dfrac{1}{ \eta_{t} } \| \bm{x} \|_{2}^{2}.
\end{equation}
Optimistic FTRL \citep{rakhlin2013online}: given estimation $\bm{m}^{t+1}$ of loss at iteration $t+1$, choose
\begin{equation}\label{alg:pred-FTRL}\tag{O-FTRL}
\bm{x}_{t+1} \in \arg \min_{ \bm{x} \in \mcX } \; \langle \sum_{\tau=1}^{t} \bm{f}_{\tau} + \bm{m}^{t+1},\bm{x}\rangle + \dfrac{1}{\eta_{t}} \|  \bm{x} \|_{2}^{2}.
\end{equation}
\item Online Mirror Descent (OMD) \citep{nemirovsky1983problem,beck2003mirror}:
\begin{equation}\label{alg:OMD}\tag{OMD}
\bm{x}_{t+1} \in \min_{\bm{x} \in \mcX } \langle \bm{f}_{t},\bm{x} \rangle + \dfrac{1}{ \eta_{t} } \| \bm{x}-\bm{x}_{t}\|_{2}^{2}.
\end{equation}
 Optimistic OMD \citep{chiang2012online}: given estimation $\bm{m}^{t+1}$ of loss at iteration $t+1$, 
 \begin{equation}\label{alg:pred-OMD}\tag{O-OMD}
  \begin{aligned}
 &\bm{z}_{t+1}  \in \min_{\bm{z} \in \mcX } \langle \bm{m}_{t+1},\bm{z} \rangle + \dfrac{1}{ \eta_{t} } \| \bm{z}-\bm{x}_{t}\|_{2}^{2}, \\
 & \text{Observe the loss }\bm{f}_{t+1} \text{ related to } \bm{z}_{t+1}, \\
 & \bm{x}_{t+1}  \in \min_{\bm{x} \in \mcX } \langle \bm{f}_{t+1},\bm{x} \rangle + \dfrac{1}{ \eta_{t}} \| \bm{x} - \bm{x}_{t}\|_{2}^{2}.
 \end{aligned}
 \end{equation}
\end{enumerate}
Note that these algorithms can be written more generally using Bregman divergence (e.g., \cite{BenTal-Nemirovski}). We choose to work with $\| \cdot \|_{2}$ instead of Kullback-Leibler divergence as this $\ell_{2}$-setup is usually associated with faster empirical convergence rates \citep{ChambollePock16,gao2021increasing}. Additionally,  following \cite{chiang2012online,rakhlin2013online}, we use the last observed loss as the predictor for the next loss, i.e., we set $\bm{m}^{t+1} = \bm{f}_{t}$. 
\subsection{Implementations}\label{app:OMD-implementation}
The proximal updates defined in the previous section need to be resolved for the decision sets of both players of the distributionally robust optimization problem \eqref{eq:dro}. We present the details of our implementation here. The results in the rest of this section are reminiscent to the novel tractable proximal setups presented in \cite{grand2020first,grand2020scalable}.

\paragraph{Computing the projection steps for the first player}
For $\mcX = \{ \bm{x} \in \mbR^{n} \; \vert \; \| \bm{x}  - \bm{x}_{0} \|_{2} \leq \epsilon_{x}\}$, $\bm{c}, \bm{x}' \in \mbR^{n}$ and a step size $\eta>0$, the prox-update becomes 
\[\min_{\| \bm{x} -\bm{x}_{0} \|_{2} \leq \epsilon_{x}} \langle \bm{c}, \bm{x} \rangle + \dfrac{1}{2 \eta} \| \bm{x} - \bm{x}'\|_{2}^{2}.\]
Using a change of variable, we find that the optimal solution $\bm{x}^{*}$ to the problem above is 
\[\bm{x}^{*} = \bm{x}_{0} + \epsilon_{x} \dfrac{\bm{x}'-\eta \bm{c} - \bm{x}_{0}}{\max\{\epsilon_{x},\| \bm{x}'-\eta \bm{c} - \bm{x}_{0} \|_{2} \}}.\]
\paragraph{Computing the projection steps for the second player}
For $\mcY = \{ \bm{y} \in \Delta(m) \; \vert \; \| \bm{y}  - \bm{y}_{0} \|_{2} \leq \epsilon_{y}\}$, the proximal update of the second player from a previous point $\bm{y}'$ and a step size of $\eta>0$ becomes
\begin{equation}\label{eq:prox-update-y-player}
\min_{\| \bm{y} -\bm{y}_{0} \|_{2} \leq \epsilon_{y}, \bm{y} \in \Delta(m)} \langle \bm{c}, \bm{y} \rangle + \dfrac{1}{2 \eta} \| \bm{y} - \bm{y}'\|_{2}^{2}.
\end{equation}
If we dualize the $\ell_{2}$ constraint with a Lagrangian multiplier $\mu \geq 0$ we obtain the relaxed problem $q(\mu)$ where
\begin{equation}\label{eq:q-mu}
q(\mu) = - (1/2) \epsilon_{y}^{2} \mu + \min_{\bm{y} \in \Delta(m)} \langle \bm{c}, \bm{y} \rangle + \dfrac{1}{2 \eta} \| \bm{y} - \bm{y}'\|_{2}^{2} + \dfrac{\mu}{2}\| \bm{y} - \bm{y}_{0} \|_{2}^{2}.
\end{equation}
Note that the $\arg \min$ in 
\[ \min_{\bm{y} \in \Delta(m)} \langle \bm{c}, \bm{y} \rangle + \dfrac{1}{2 \eta} \| \bm{y} - \bm{y}'\|_{2}^{2} + \dfrac{\mu}{2}\| \bm{y} - \bm{y}_{0} \|_{2}^{2} \]
is the same $\arg \min$ as in 
\begin{equation}\label{eq:intermediary-q-mu}
 \min_{\bm{y} \in \Delta(m)} \| \bm{y} - \dfrac{\eta}{\eta \mu + 1} \left( \dfrac{1}{\eta} \bm{y}' + \mu \bm{y}_{0} - \bm{c} \right) \|_{2}^{2}.
\end{equation}
Note that \eqref{eq:intermediary-q-mu} is an orthogonal projection onto the simplex.  Therefore, it can be solved efficiently \citep{euclidean-projection}. We call $\bm{y}(\mu)$ an optimal solution of \eqref{eq:intermediary-q-mu}. Then $q(\mu)$ can be rewritten
\[ q(\mu) = - (1/2) \epsilon_{y}^{2} \mu + \langle \bm{c}, \bm{y}(\mu) \rangle + \dfrac{1}{2 \eta} \| \bm{y}(\mu) - \bm{y}'\|_{2}^{2} + \dfrac{\mu}{2}\| \bm{y}(\mu) - \bm{y}_{0} \|_{2}^{2}.\]
We can therefore binary search $q(\mu)$ as in the previous expression.
An upper bound $\bar{\mu}$ for $\mu^{*}$ can be computed as follows. Note that 
\[ q(\mu) \leq - (1/2) \epsilon_{y}^{2} \mu + \langle \bm{c}, \bm{y}_{0} \rangle + \dfrac{1}{2 \eta} \| \bm{y}_{0}- \bm{y}'\|_{2}^{2}.\]
Since $\mu \mapsto q(\mu)$ is concave we can choose $\bar{\mu}$ such that $q(\bar{\mu}) \leq q(0)$. Using the previous inequality this yields
\[ \bar{\mu} = \frac{2}{\epsilon_{y}^{2}} \left(  \langle \bm{c}, \bm{y}_{0} \rangle + \dfrac{1}{2 \eta} \| \bm{y}_{0}- \bm{y}'\|_{2}^{2} - q(0) \right).\]
In our simulations, we search for an optimal $\mu$ using the {\sf minimize\_scalar} function from the {\sf sklearn} Python package, with an accuracy of $\epsilon=0.001$.
\subsection{Computing the theoretical fixed step sizes for Section \ref{sec:simu-dro}}\label{app:simu-dro-step-size}
For \ref{alg:OMD} and \ref{alg:FTRL}, in theory (e.g., \cite{BenTal-Nemirovski}),  for a player with decision set $\mcX$, we can choose $\eta_{\sf th} = \sqrt{2} \Omega/L\sqrt{T}$ with $\Omega = \max_{\bm{x},\bm{x}' \in \mcX} \| \bm{x}-\bm{x}'\|_{2}$, and $L$ an upper bound on the norm of any observed loss $ \bm{f}_{t}$: $ \| \bm{f}_{t} \|_{2} \leq L, \forall \; t \geq 1$. Note that this requires to know 1) the number of iterations $T$, and 2) the upper bound $L$ on the norm of any observed loss $\bm{f}_{t}$, before the losses are generated. For \ref{alg:pred-OMD}, we can choose $\eta_{\sf th} = 1/\sqrt{8}L$ (Corollary 6 in \cite{syrgkanis2015fast}), and for \ref{alg:pred-FTRL}, we can choose $\eta_{\sf th} = 1/2L$ (Corollary 8 in \cite{syrgkanis2015fast}).

We now show how to compute $L_{x}$ and $L_{y}$ (for the first player and the second player) for an instance of the distributionally robust logistic regression problem \eqref{eq:dro}.
\begin{enumerate}
\item For the first player we have $\bm{f}_{t} = \bm{A}^{t}\bm{y}_{t}$, where $\bm{A}^{t}$ is the matrix of subgradients of $\bm{x} \mapsto F(\bm{x},\bm{y}_{t})$ at $\bm{x}_{t}$:
\[ A^{t}_{ij} = \dfrac{-b_{i}a_{i,j}\exp(-b_{i}\bm{a}_{i}^{\top}\bm{x}_{t})}{1+\exp(-b_{i}\bm{a}_{i}^{\top}\bm{x}_{t})} + \mu x_{j}, \forall \; (i,j) \in \{ 1,...,m\} \times \{1,...,n\}.\]
Therefore, $\| \bm{f}_{t} \|_{2} \leq \| \bm{A}^{t} \|_{2} \| \bm{y}_{t} \|_{2} \leq \| \bm{A}^{t} \|_{2}$, because $\bm{y} \in \Delta(m)$. Now we have $\| \bm{A}^{t} \|_{2} \leq \| \bm{A}^{t} \|_{F} = \sqrt{\sum_{i,j} \vert A_{ij}^{t}\vert^{2}}$.  
Note that \[ \sqrt{\sum_{i,j} \vert A_{ij}^{t}\vert^{2}} \leq \sum_{i,j} \vert A_{ij}^{t}\vert.\]
We also have $\vert A^{t}_{ij} \vert \leq \vert b_{i}a_{i,j}\vert + \mu \vert x_{j} \vert$. Recall that we have $\bm{x} \in \mbR^{n}$ such that $\| \bm{x} - \bm{x}_{0} \|_{2} \leq \epsilon_{x}$. We obtain the following upper bound:
\[ L_{x} = \sum_{i,j}\vert b_{i}a_{i,j}\vert  + \mu \cdot m \cdot \left( \| \bm{x}_{0} \|_{1} + \sqrt{n} \epsilon_{x} \right).\]
\item For the second player, the loss $\bm{f}_{t}$ is $\bm{f}_{t} = \left( \ell_{i}(\bm{x}_{t}) \right)_{i \in [1,m]}$, with $\ell_{i}(\bm{x}) = \log(1+\exp(-b_{i}\bm{a}^{\top}_{i}\bm{x}))$. 
For each $i \in[1,m]$ we have 
\[\vert \ell_{i}(\bm{x}) \; \vert \; \leq \log(1+\exp( \vert b_{i}\vert \epsilon_{x} \| \bm{a}_{i} \|_{2} )),\] and we can conclude that
\[L_{y} = \sqrt{\sum_{i=1}^{m} \log(1+\exp( \vert b_{i}\vert \epsilon_{x} \| \bm{a}_{i} \|_{2} ))^{2} }.\]
\end{enumerate}

\section{Computing the theoretical step sizes for Section \ref{sec:simu-mdp}}\label{app:constant-mdp}
In the saddle-point formulation of MDP, the objective function is
\[F(\bm{v},\bm{\mu})= (1-\lambda) \bm{p}_{0}^{\top}\bm{v} + \sum_{s=1}^{n} \sum_{a=1}^{A} \mu_{sa} \left( r_{sa}+\lambda \bm{P}_{sa}^{\top}\bm{v} - v_{s}\right),\]
for $\bm{v} \in \mbR^{n},\| \bm{v} \|_{2} \leq \sqrt{n} r_{\infty}/(1-\lambda)$ and $\mu \in \Delta(n \times A)$.
The function $F$ is differentiable and we have $\nabla_{v} F(\bm{v},\bm{\mu}) \in \mbR^{n},\nabla_{\mu} F(\bm{v},\bm{\mu}) \in \mbR^{n \times A}$ with
\begin{align*}
 \left(\nabla_{v} F(\bm{v},\bm{\mu})\right)_{s'} & = (1-\lambda)p_{0s'} + \lambda \sum_{s,a} \mu_{sa} P_{sas'} - \sum_{a} \mu_{s'a}, \forall \; s' \in [n], \\
 \left(\nabla_{\mu} F(\bm{v},\bm{\mu})\right)_{sa} & = r_{sa} + \lambda \bm{P}_{sa}^{\top}\bm{v} - v_{s}, \forall \; (s,a) \in [n] \times [A].
\end{align*}
We now provide upper bounds $L_{v}$ and $L_{\mu}$ on $\| \nabla_{v} F(\bm{v},\bm{\mu}) \|_{2}$ and $\| \nabla_{\mu} F(\bm{v},\bm{\mu}) \|_{2}$.
Using the equivalence between $\| \cdot \|_{2}$ and $\| \cdot \|_{1}$, we have, for $\bm{\mu} \in \Delta(n \times A)$,
\begin{align*}
\| \nabla_{v} F(\bm{v},\bm{\mu}) \|_{2} & \leq \| \nabla_{v} F(\bm{v},\bm{\mu}) \|_{1} \\
& \leq (1-\lambda) + \lambda \sum_{s',a,s} \mu_{sa}P_{sas'} + \sum_{s',a} \mu_{s'a} \\
& \leq (1-\lambda) + \lambda + 1 \\
& \leq 2.
\end{align*}
For bounding $\|\nabla_{\mu} F(\bm{v},\bm{\mu}) \|_{2}$, we can rely on Cauchy-Schwarz's inequality and $\| \bm{v} \|_{2} \leq \sqrt{n} r_{\infty}/(1-\lambda)$ to obtain 
\[\| \nabla_{\mu} F(\bm{v},\bm{\mu}) \|_{2} \leq \| \bm{r} \|_{2} + \frac{ \sqrt{n} r_{\infty}}{1-\lambda} \left(A \left( \lambda  n +1\right)\right).\]

Overall, we can choose 
\[ L_{v} = 2, L_{\mu} = \|\bm{r} \|_{2} + \frac{ \sqrt{n} r_{\infty}}{1-\lambda} \left(A \left( \lambda  n +1\right)\right).\]

\end{document}